\newcommand{\N}{\mathbb{N}}
\newcommand{\p}{\varphi}
\newcommand{\e}{\varepsilon}
\newcommand{\w}{\widetilde}
\newcommand{\oo}{\overline}
\newcommand{\EE}{\mathfrak{E}}
\newcommand{\FF}{\mathfrak{F}}
\newcommand{\TT}{\mathfrak{T}}
\newcommand{\ind}{\mathds{1}}
\newcommand{\Sz}{\mathrm{Sz}}
\newcommand{\Cz}{\mathrm{Cz}}
\newcommand{\BM}{\mathsf{BM}}
\newcommand{\ssp}{\scriptscriptstyle{p}}
\newcommand{\ssq}{\scriptscriptstyle{q}}
\newcommand{\abs}[1]{\lvert#1\rvert}
\newcommand{\n}[1]{\|#1\|}
\newcommand{\supp}{\mathrm{supp}}
\newcommand{\ccup}{\scalebox{0.95}{$\bigcup$}}
\renewcommand{\leq}{\leqslant}
\renewcommand{\geq}{\geqslant}
\renewcommand{\frown}{\mathbin{\raisebox{0.3ex}{$\smallfrown$}}}
\newcommand{\vertiii}[1]{{\left\vert\kern-0.25ex\left\vert\kern-0.25ex\left\vert #1 
    \right\vert\kern-0.25ex\right\vert\kern-0.25ex\right\vert}}
\newcommand{\vertiiib}[1]{{\Biggl\vert\kern-0.25ex\Biggl\vert\kern-0.25ex\Biggl\vert #1 
    \Biggr\vert\kern-0.25ex\Biggr\vert\kern-0.25ex\Biggr\vert}}
\newtheorem{theorem}{Theorem}[section]
\newtheorem{lemma}[theorem]{Lemma}
\newtheorem{proposition}[theorem]{Proposition}
\newtheorem{corollary}[theorem]{Corollary}
\theoremstyle{definition}
\newtheorem{example}[theorem]{Example}
\theoremstyle{remark}
\newtheorem{remark}[theorem]{Remark}
\numberwithin{equation}{section}
\begin{document}
\title[Direct sums and summability of the Szlenk index]{Direct sums and summability of the Szlenk index}

\author[Sz. Draga]{Szymon Draga}
\address{Institute of Mathematics, University of Silesia, Bankowa 14, 40-007 Katowice, Poland}
\email{szymon.draga@gmail.com}

\author[T. Kochanek]{Tomasz Kochanek}
\address{Institute of Mathematics, Polish Academy of Sciences, \'Sniadeckich 8, 00-656 Warsaw, Poland\, {\rm and}\, Institute of Mathematics, University of Warsaw, Banacha~2, 02-097 Warsaw, Poland}
\email{tkoch@impan.pl}

\subjclass[2010]{Primary 46B03, 46B20}

\begin{abstract}
We prove that the $c_0$-sum of separable Banach spaces with uniformly summable Szlenk index has summable Szlenk index, whereas this result is no longer valid for more general direct sums. We also give a~formula for the Szlenk power type of the \mbox{$\EE$-direct} sum of separable spaces provided that $\EE$ has a~shrinking unconditional basis whose dual basis yields an~asymptotic $\ell_p$ structure in $\EE^\ast$. As a~corollary, we show that the Tsirelson direct sum of infinitely many copies of $c_0$ has power type $1$ but non-summable Szlenk index.
\end{abstract}
\maketitle

\section{Introduction}
Through recent years, various ordinal indices has become one of the main tools in studying geometry and structural properties of Banach spaces. The starting point was the notion of Szlenk index, introduced in \cite{szlenk} in order to show that there is no separable reflexive Banach space universal for the class of all separable reflexive Banach spaces. Also some quantitative considerations concerning the Szlenk index, especially its summability and the Szlenk power type, lead to significant development in understanding the geometry of Banach spaces with Szlenk index at most $\omega$; see \cite{gkl} and the survey article \cite{lancien}.

In this paper, we study the behavior of summability of the Szlenk index and the Szlenk power type with respect to general direct sums. Our study may be regarded as a~natural continuation of P.A.H.~Brooker's work (\cite{brooker}) concerning Szlenk indices of direct sums of Banach spaces and operators acting on them.

Summability of the Szlenk index may be understood as a~`strong' separability condition on the dual space. Banach spaces sharing this property resemble subspaces of $c_0$ in the sense that they admit `almost' Lipschitz UKK$^\ast$-renormings, whereas Banach spaces having a~truly Lipschitz UKK$^\ast$-renorming are just subspaces of $c_0$ ({\it cf. }\cite{gkl} and \cite{gkl_gafa}). Nevertheless, spaces with summable Szlenk index form an~interesting class of Banach spaces, where the original Tsirelson space serves as a~reflexive example. More generally, all Banach spaces with duals admiting a~finite-dimensional decomposition with a~skipped asymptotic $\ell_1$ blocking have summable Szlenk index ({\it cf. }\cite{kos}). As pointed out in \cite{gkl}, it is the `lack of isotropy' of the Szlenk derivation which is responsible for this quite an~abundance of Banach spaces with summable Szlenk index. Somewhat surprisingly, this class is also relevant to the problem of the existence of non-trivial twisted sums of $C(K)$-spaces (\cite{ccky}).

Rather than structural, there are some renorming properties which accurately reflect the nature of Banach spaces whose Szlenk index is summable or has a~prescribed power type. The problems here considered thus require combining standard techniques, employed when dealing with finite dimensional decompositions, with certain geometric tools. Regarding this latter aspect, we heavily rely on the seminal paper by G.~Godefroy, N.J.~Kalton and G.~Lancien \cite{gkl}. 

Our main results are: {\bf (1)} the $c_0$-sum of separable Banach spaces with uniformly summable Szlenk index has summable Szlenk index, however, this result fails to hold when $c_0$ is replaced by a~general Banach space with summable Szlenk index, {\it e.g. }the Tsirelson space (see Theorem~\ref{c0-sum} and Example~\ref{T(c0)}); {\bf (2)} under reasonable assumptions on $\EE$, the Szlenk power type of the $\EE$-direct sum of separable Banach spaces $X_n$ ($n\in\N$) is as expected, that is, the maximum of the Szlenk power type of $\EE$ and the `best' upper bound---understood in an~appropriate way---of Szlenk power types of $X_n$'s (see Theorem~\ref{power}). This result naturally involves asymptotic $\ell_p$ spaces, as explained below. As a~corollary, we prove that $\TT(c_0)$, the Tsirelson direct sum of infinitely many copies of $c_0$, is a~Banach space with Szlenk power type $1$ but non-summable Szlenk index; it is probably the first such example in the literature.

The organization of the paper is as follows. In Section~2 we introduce all necessary definitions concerning Szlenk derivation, Szlenk power type and tree-maps in Banach spaces. We also give outlines of proofs of two renorming theorems which basically come from \cite{gkl} and have been adjusted for future use. In Section~3 we prove our first main result listed above. Section~4 is a~kind of supplement to the previous one; we prove the summability of the Szlenk index for $\EE$-direct sums of finite-dimensional spaces, where $\EE$ is any Banach space with an~unconditional basis and summable Szlenk index. Although it can be also derived from a~result by H.~Knaust, E.~Odell and Th.~Schlumprecht~\cite{kos}, we present a~slightly different approach. The final section is mainly devoted to the proof of Theorem~\ref{power} which gives a~formula for the Szlenk power type of the direct sum $(\bigoplus_{n=1}^\infty X_n)_{\EE}$ in the case where $\EE^\ast$ is asymptotic $\ell_p$ and the Szlenk power types of $X_n$'s are appropriately bounded. The key step relies on deriving a~log-type estimate for the norm of an `$\ell_p$-sum' of positive vectors in an~asymptotic $\ell_p$ space (see Proposition~\ref{log_estimate}). As a~by-product, we obtain an~asymptotically sharp lower estimate for Tsirelson's norm of the sum of positive vectors in terms of the sum of their norms, where no assumption on the supports is made. We end the paper by giving two counterexamples which show that Theorem~\ref{power} is, in a~sense, the best one could expect.

\section{Tree-maps and renormings}
Let $X$ be a Banach space, $K$ be a~weak$^\ast$-compact subset of $X^\ast$ and $\e>0$. We define the $\e$-{\it Szlenk derivation} of $K$ by
$$
\iota_\e K=\bigl\{x^\ast\in K\colon \mathrm{diam}(K\cap U)>\e\mbox{ for every }w^\ast\mbox{-open neighborhood of }x^\ast\bigr\}
$$
and its iterates by $\iota_\e^0K=K$, $\iota_\e^{\alpha+1}K=\iota_\e(\iota_\e^\alpha K)$ for any ordinal $\alpha$, and $\iota_\e^\alpha K=\bigcap_{\beta<\alpha}\iota_\e^\beta K$ for any limit ordinal $\alpha$. The $\e$-{\it Szlenk index} of $X$, $\Sz(X,\e)$, is defined as the least ordinal $\alpha$ (if any such exists) for which $\iota_\e^\alpha B_{X^\ast}=\varnothing$, where $B_{X^\ast}$ is the unit ball in $X^\ast$. The {\it Szlenk index} of $X$ is defined as $\Sz(X)=\sup_{\e>0}\Sz(X,\e)$. Following \cite{gkl} we also define $\hat\iota_\e K$ as the weak$^\ast$-closed convex hull of $\iota_\e K$; $\Cz(X,\e)$ and $\Cz(X)$ are defined analogously as above.

In this paper, we are exclusively concerned with the case where $\Sz(X)\leq\omega$ which is equivalent to $\Sz(X,\e)$ being finite for every $\e>0$. The function $\Sz(X,\cdot)$ is then submultiplicative and hence there exists a~limit
$$
p(X)\coloneqq\lim_{\e\to 0+}\frac{\log\Sz(X,\e)}{\abs{\log\e}},\quad 1\leq p(X)<\infty,
$$
which defines the {\it Szlenk power type} of $X$. Equivalently, we have
\begin{equation}\label{p_def}
p(X)=\inf\bigl\{q\geq 1\colon \sup_{0<\e<1}\e^q\;\!\Sz(X,\e)<\infty\bigr\}
\end{equation}
({\it cf. }\cite{lancien} for further information). Note that in general the infimum may not be attained, however, for every $\delta>0$ there exists a~constant $C>0$ such that $\Sz(X,\e)\leq C\e^{-p(X)-\delta}$. Observe also that the Szlenk power type is an~isomorphic invariant. Indeed, if $d$ equals the Banach--Mazur distance between $X$ and $Y$, then an elementary calculation shows that ${\Sz(X,d\e)\leq\Sz(Y,\e)}$ ({\it cf. }\cite[Lemma 2.3]{gkl}). Hence,
$$
p(Y)=\lim_{\e\to 0+}\frac{\log\Sz(Y,\e)}{\abs{\log\e}}\geq\lim_{\e\to 0+}\frac{\log\Sz(X,d\e)}{\abs{\log\e}}=p(X).
$$

We say that $X$ has {\it summable Szlenk index} provided that there is a~constant $M$ such that for all positive $\e_1,\ldots,\e_n$ we have 
$$
\sum_{i=1}^n\e_i\leq M\,\,\mbox{ whenever }\,\,\iota_{\e_1}\ldots\iota_{\e_n}B_{X^\ast}\not=\varnothing.
$$
Then we also say that $X$ has summable Szlenk index {\it with constant $M$}. Given any family of Banach spaces, we shall say that they have {\it uniformly summable Szlenk index} provided that all of them have summable Szlenk index with the same constant.

One of the most profitable ways of handling Szlenk derivations is to consider tree-maps with values in a~Banach space. Here, we shall adopt the approach of Godefroy, Kalton and Lancien \cite{gkl}. Their two renorming theorems (in a~slightly modified form) will be of particular importance for us.

Let $\mathcal{F}\N$ stand for the family of all finite subsets of $\N$, usually written in increasing manner, and equipped with the following partial order relation: if $a=\{m_1,\ldots,m_j\}$ and $b=\{n_1,\ldots,n_k\}$ with $m_1<\ldots<m_j$ and $n_1<\ldots<n_k$, then $a\leq b$ if and only if $j\leq k$ and $m_i=n_i$ for each $1\leq i\leq j$. For any $a\in\mathcal{F}\N$, we denote $\abs{a}$ the cardinality of $a$; $b\in\mathcal{F}\N$ is called a~{\it successor} of $a$ if $\abs{b}=\abs{a}+1$ and $a\leq b$, so $b=a\!\frown\! n$ for some $n\in\N$, where $\frown$ stands for the concatenation operation. We say that a~subset $S\subseteq\mathcal{F}\N$ is a~{\it full subtree} provided that:
\begin{itemize}
\item $\varnothing\in S$;
\item each $a\in S$ has infinitely many successors in $S$;
\item if $a\in S$ and $a\not=\varnothing$, then the unique direct predecessor of $a$ belongs to $S$.
\end{itemize}
For simplicity, we shall use the letter $S$ for $\mathcal{F}\N$ henceforth. If $T\subseteq S$ is a~full subtree, every sequence $\beta=(a_n)_{n=0}^N$,  so that $a_0=\varnothing$ and $a_{n+1}$ is a~successor of $a_n$ for $0\leq n<N$ will be called a~{\it branch} in the case where $N=\infty$, and a~{\it partial branch} when $N<\infty$.

Let $V$ be a vector space and $T\subseteq S$ be a~full subtree. By a~{\it tree-map in $V$} we mean any map $a\mapsto x_a\in V$ defined on $T$ such that $x_{\varnothing}=0$ and the set $\{a\in\beta\colon x_a\not=0\}$ is finite for every branch $\beta\subset T$. Any such map will be typically denoted by $(x_a)_{a\in T}$. Given a~topology $\tau$ on $V$ we shall say that $(x_a)_{a\in T}$ is {\it $\tau$-null} provided that for every $a\in T$ the sequence $(x_{a\smallfrown n}\colon n\in\N,\, a\!\frown\! n\in T)$ is $\tau$-null. In particular, if $X$ is a~Banach space, we may consider weakly null tree-maps in $X$ and weak$^\ast$-null tree-maps in $X^\ast$.

\begin{lemma}[{\it cf. }{\cite[Prop.~3.4]{gkl}}]\label{gkl_lemma}
Let $X$ be a separable Banach space and $\e_1,\ldots,\e_n>0$. In order that $\iota_{\e_1}\ldots\iota_{\e_n}B_{X^\ast}\not=\varnothing$ it is necessary that there exists a~weak$^\ast$-null tree-map $(x_a^\ast)_{a\in S}$ in $X^\ast$ such that $\n{x_a^\ast}\geq\frac{1}{4}\e_{\abs{a}}$ for $1\leq\abs{a}\leq n$ and $\n{\sum_{a\in\beta}x_a^\ast}\leq 1$ for every branch $\beta\subset S$, and it is sufficient that there exists a~weak$^\ast$-null tree-map $(x_a^\ast)_{a\in S}$ in $X^\ast$ such that $\n{x_a^\ast}\geq\e_{\abs{a}}$ for $1\leq\abs{a}\leq n$ and $\n{\sum_{a\in\beta}x_a^\ast}\leq 1$ for every branch $\beta\subset S$. 
\end{lemma}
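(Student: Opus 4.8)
The plan is to prove both implications by an~inductive extraction/construction argument on $n$, matching weak$^\ast$-null tree-maps with iterated Szlenk derivations. For the \emph{necessity} part, I would first recall the basic single-step fact underlying the whole Szlenk machinery: if $x^\ast\in\iota_\e K$ for a~weak$^\ast$-compact set $K$, then for every weak$^\ast$-neighbourhood $U$ of $x^\ast$ one finds two points of $K\cap U$ at distance $>\e$, hence (after passing to $K\cap U$) a~point $y^\ast\in K$ with $\n{y^\ast-x^\ast}>\e/2$; since this holds in every neighbourhood and $B_{X^\ast}$ is weak$^\ast$-metrizable (here separability of $X$ enters), one produces a~weak$^\ast$-convergent sequence $(y_k^\ast)\subset K$ with $\n{y_k^\ast-x^\ast}>\e/2$ and $y_k^\ast\to x^\ast$ weak$^\ast$, so that the differences $z_k^\ast\coloneqq y_k^\ast-x^\ast$ are weak$^\ast$-null with $\n{z_k^\ast}>\e/2$ (in fact one wants $\n{z_k^\ast}\ge\tfrac14\e$ after a~further halving to be safe with the norm bookkeeping). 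Iterating: starting from a~point in $\iota_{\e_1}\cdots\iota_{\e_n}B_{X^\ast}$, I would build the tree-map level by level, using at node $a$ with $\abs{a}=j-1$ the fact that the current point lies in $\iota_{\e_j}(\text{something}\subset\iota_{\e_{j+1}}\cdots\iota_{\e_n}B_{X^\ast})$ to spawn infinitely many successors $a\frown k$, setting $x_{a\frown k}^\ast$ to be the corresponding difference vector (norm $\ge\tfrac14\e_j$) and recording the new weak$^\ast$-limit points as the data attached to the successors. The partial sums along any partial branch telescope to an~element of $B_{X^\ast}$, which gives $\n{\sum_{a\in\beta}x_a^\ast}\le 1$, and the tree-map is finitely supported on branches because it vanishes beyond level $n$.

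For the \emph{sufficiency} part I would run the reverse induction: given a~weak$^\ast$-null tree-map $(x_a^\ast)$ with $\n{x_a^\ast}\ge\e_{\abs a}$ for $1\le\abs a\le n$ and branch-sums in $B_{X^\ast}$, I claim that the weak$^\ast$-closures of the sets of partial sums $\{\sum_{a\in\gamma}x_a^\ast:\gamma\text{ a partial branch of length }j\}$ survive $j$ Szlenk derivations with parameters $\e_n,\e_{n-1},\dots$ read in the appropriate order; concretely one shows by downward induction on the level that each such partial sum lies in $\iota_{\e_{j+1}}\cdots\iota_{\e_n}B_{X^\ast}$, the key point at each step being that the infinitely many weak$^\ast$-null successors $x_{a\frown k}^\ast$ of norm $\ge\e_{\abs a+1}$ force the relevant diameter to exceed $\e_{\abs a+1}$ in every weak$^\ast$-neighbourhood of $\sum_{b\in\gamma}x_b^\ast$. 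Taking $\gamma=\{\varnothing\}$ at the end yields $0\in\iota_{\e_1}\cdots\iota_{\e_n}B_{X^\ast}$, so that set is nonempty. I expect the main obstacle to be purely bookkeeping: keeping the indexing of the $\e_i$ consistent between the ``top-down'' construction in the necessity direction and the ``bottom-up'' verification in the sufficiency direction, and making sure the factor $\tfrac14$ (which comes from the two successive halvings — one to pass from diameter to distance from $x^\ast$, and one absorbed for safety) is placed on the correct side; the genuinely substantive input, namely that a~single Szlenk derivation produces a~weak$^\ast$-null sequence of suitably large norm and conversely that such a~sequence forces a~point into the derived set, is standard and essentially already contained in \cite[Prop.~3.4]{gkl}, so I would cite it for the technical core and only sketch the adaptation. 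Separability of $X$ is used exactly once, to get weak$^\ast$-metrizability of $B_{X^\ast}$ and hence to replace ``diameter large in every neighbourhood'' by an~honest weak$^\ast$-convergent sequence.
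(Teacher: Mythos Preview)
The paper does not give its own proof of this lemma; it is stated with the citation ``{\it cf.} \cite[Prop.~3.4]{gkl}'' and then used as a~black box. Your sketch is precisely the standard argument that the ``{\it cf.}'' is pointing at, and it is essentially correct.

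Two small clarifications on your bookkeeping. First, the second halving that yields the factor $\tfrac14$ is not merely ``absorbed for safety'': if $x_0^\ast$ is the initially chosen point in $\iota_{\e_1}\ldots\iota_{\e_n}B_{X^\ast}$, then the telescoping sum along a~branch equals $y^\ast-x_0^\ast$ with $y^\ast\in B_{X^\ast}$, hence has norm $\le 2$, not $\le 1$; the rescaling by $\tfrac12$ is what forces the branch-sum into $B_{X^\ast}$ and simultaneously drops the lower bound from $\e/2$ to $\e/4$. Second, in the sufficiency direction be mindful of the strict inequality in the definition of $\iota_\e$: from $\n{x_{a\frown k}^\ast}\ge\e_{\abs a+1}$ you literally obtain diameter $\ge\e_{\abs a+1}$, not $>\e_{\abs a+1}$; this is the usual harmless nuisance, handled either by replacing each $\e_j$ by any $\e_j'<\e_j$ in the conclusion (which is all one ever needs in applications) or by the formulation in \cite{gkl}.
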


Now, we recall some important concepts introduced by Godefroy, Kalton and Lancien which connect tree-maps with renorming problems and then we formulate two renorming theorems coming from \cite{gkl}. We modify them a~bit in order to prepare the ground for future applications, so brief proofs are included for the sake of completeness. The first statement is an elaboration of a~part of \cite[Theorem~4.10]{gkl}---the important issue is that an~occurring number $c$ depends only on the constant of summability of Szlenk index. The second statement is a~more quantitative version of \cite[Theorem~4.8]{gkl} and exhibits the `$\ell_q$-like' behaviour in the dual of a~space with Szlenk power type less than $q$.

We say that a tree-map $(x_a)_{a\in T}$ is {\it of height} $n\in\N$ if $n$ is the largest integer for which there exists $a\in T$ with $\abs{a}=n-1$ and $x_a\not=0$. Given a~separable Banach space $X$ and $\sigma>0$, we define $N=N(\sigma)$ to be the least integer $N$ for which there exists a~weakly null tree-map $(x_a)_{a\in S}$ in $X$ of height $N+1$ such that $\n{x_a}\leq\sigma$ for each $a\in S$ and $\n{\sum_{a\in\beta}x_a}>1$ for each branch $\beta\subset S$, and we set $N(\sigma)=\infty$ if none such tree-map exists.

For any continuous, monotone increasing functions $f,g$ on $[0,1]$ satisfying $f(0)=g(0)=0$ we say that $f$ $C$-{\it dominates} $g$ if $f(\tau)\geq g(\tau/C)$ for every $\tau\in [0,1]$, and then we write $f\gtrsim_{\,C} g$ (or $g\lesssim_{\,C} f$). If $f\gtrsim_{\,C} g$ and $f\lesssim_{\,C} g$, then we say that $f$ and $g$ are $C$-{\it equivalent} and we write $f\simeq_{\,C}g$.

For any function $f$ as above we define its {\it dual Young's function} $f^\ast$ by $$f^\ast(\sigma)=\sup\bigl\{\sigma\tau-f(\tau)\colon 0\leq\tau\leq 1\bigr\},$$
and we note that $f^\ast$ is always a~convex function and $f^\ast\lesssim_{\,C}g^\ast$ whenever $f\gtrsim_{\,C}g$.

The following two functions introduced in \cite{gkl} give a~proper language for the proofs of the aforementioned renorming theorems:
\begin{itemize}
\item $\p(\sigma)=\inf\bigl\{\rho_{Y}(\sigma)\colon d_\BM(X,Y)\leq 2\bigr\}$,\\ where $\rho_Y(\sigma)$ is the least constant $\rho\geq 0$ so that if $y,y_n\in Y$ ($n\in\N$) satisfy $\n{y}=1$, $y_n\xrightarrow[]{\,w\,}0$ and $\n{y_n}\leq\sigma$, then
$\limsup_{n\to\infty}\n{y+y_n}\leq 1+\rho$;

\vspace*{2mm}
\item $\psi(\tau)=\sup\bigl\{\theta_Y(\tau)\colon d_\BM(X,Y)\leq 2\bigr\}$,\\
where $\theta_Y(\tau)$ is the greatest constant $\theta\geq 0$ so that if $y^\ast,y_n^\ast\in Y^\ast$ ($n\in\N$) satisfy $\n{y}=1$, $y_n^\ast\xrightarrow[]{\,w\ast\,}0$ and $\n{y_n^\ast}\geq\tau$, then $\liminf_{n\to\infty}\n{y^\ast+y_n^\ast}\geq 1+\theta$. 
\end{itemize}
(Here, $d_\BM$ stands for the Banach--Mazur distance.) According to \cite[Prop.~2.8]{gkl}, we have
\begin{equation}\label{ppsi}
\p\simeq_{\,8}\psi^\ast\qquad\mbox{ and }\qquad \p^\ast\simeq_{\,4}\psi.
\end{equation}

\begin{proposition}\label{renorming}
Let $X$ be a separable Banach space having summable Szlenk index with constant $M$. Then there is a~number $c=c(M)\in (0,1)$ depending only on $M$ such that for every $\tau\in (0,1)$ there exists a~norm $\abs{\,\cdot\,}$ on $X$ having the following properties (we use the symbol $\abs{\,\cdot\,}$ also for the corresponding dual norm):
\begin{itemize}
\item[{\rm (i)}] $\frac{1}{2}\n{x}\leq\abs{x}\leq\n{x}$ for every $x\in X$;

\vspace*{1mm}
\item[{\rm (ii)}] if $x^\ast\in X^\ast$, $\abs{x^\ast}=1$ and $(x_n^\ast)_{n=1}^\infty\subset X^\ast$ is a~weak$^\ast$-null sequence with $\abs{x_n^\ast}\geq\xi\geq\tau$ for $n\in\N$, then $$\liminf_{n\to\infty}\abs{x^\ast+x_n^\ast}\geq 1+c\xi.$$
\end{itemize}
\end{proposition}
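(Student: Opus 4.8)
The plan is to run the argument behind the almost-Lipschitz UKK$^\ast$ renorming of Godefroy, Kalton and Lancien [cf.\ {\cite[Thm.~4.10]{gkl}}], keeping every constant explicit so that the resulting $c$ depends on $M$ alone; it will come out of order $1/M$, e.g.\ $c(M)=\min\{\tfrac12,(\ln 2)/(CM)\}$ for an absolute constant $C$. Fix $\tau\in(0,1)$. The only place where the hypothesis is used is the following. Applying the definition of summable Szlenk index with constant $M$ to the constant data $\e_1=\cdots=\e_k=\tau$ gives $k\tau\leq M$ whenever $\iota_\tau^{\,k}B_{X^\ast}\neq\varnothing$; hence $\iota_\tau^{\,k}B_{X^\ast}=\varnothing$ for $k>M/\tau$, and therefore, by the standard comparison between $\Sz(X,\cdot)$ and $\Cz(X,\cdot)$ available when $\Sz(X)\leq\omega$, also $\hat\iota_\tau^{\,k}B_{X^\ast}=\varnothing$ for every $k>N$, with $N\leq CM/\tau$ and $C$ absolute. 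Thus only finitely many levels of the iterated (convexified) derivation are nonempty, and their number is controlled by $M/\tau$ --- this is exactly what will force $c$ to depend on $M$ only. (If one prefers to phrase matters through tree-maps, Lemma~\ref{gkl_lemma} supplies the bridge, but no detour is needed since summability is stated directly in terms of the derivations $\iota_\e$.)

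For the renorming, following \cite{gkl} in the form convenient here, set
\[
K=\overline{\mathrm{conv}}^{\,w^\ast}\Bigl(\textstyle\bigcup_{k=0}^{N}(1+c\tau)^{k}\,\hat\iota_\tau^{\,k}B_{X^\ast}\Bigr),
\]
a $w^\ast$-compact, convex, symmetric set with $0$ in the interior, and let $\abs{\,\cdot\,}$ be the norm on $X$ whose dual ball is $\tfrac12 K$. Since the $k=0$ term is $B_{X^\ast}$ we have $B_{X^\ast}\subseteq K$, while $\hat\iota_\tau^{\,k}B_{X^\ast}\subseteq B_{X^\ast}$ and $(1+c\tau)^{k}\leq(1+c\tau)^{CM/\tau}\leq e^{cCM}\leq 2$ for $k\leq N$ give $K\subseteq 2B_{X^\ast}$; hence $\tfrac12 B_{X^\ast}\subseteq\tfrac12 K\subseteq B_{X^\ast}$, which is (i) --- and it is precisely the level bound $N\leq CM/\tau$ that buys the absolute constant $2$ and forces $c\leq(\ln 2)/(CM)$. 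For (ii) one estimates $\iota_\xi K\subseteq\hat\iota_\xi K$ and pushes the derivation through the construction using the calculus of convexified Szlenk derivations developed in \cite[\S 2]{gkl}: how $\hat\iota_\xi$ interacts with $w^\ast$-closed convex hulls, the identity $\hat\iota_\xi(sD)=s\,\hat\iota_{\xi/s}D$, and --- crucially --- the merging estimate to the effect that one derivation at a scale $\xi\geq\tau$ absorbs of order $\xi/\tau$ derivations at scale $\tau$ (the inclusion underlying the \emph{additive} character of the summability condition, as opposed to the \emph{multiplicative} character relevant to the Szlenk power type). Because $(1+c\tau)^{k}\in[1,2]$ for $k\leq N$ the effective radii never drop below $\xi/2$, so no step is wasted, and one obtains $\iota_\xi K\subseteq(1+c\tau)^{-\lfloor\xi/(2\tau)\rfloor}K\subseteq(1+c'\xi)^{-1}K$ for $\tau\leq\xi\leq 1$ with $c'\asymp c$: a genuinely linear UKK$^\ast$ modulus, uniform over $\xi\in[\tau,1]$. (To keep the effective radii above the scale at which the construction is performed for $\xi$ close to $\tau$, one actually builds $K$ from $\hat\iota_{\tau/2}^{\,k}B_{X^\ast}$; this changes nothing essential.) Property (ii), with $c$ possibly shrunk by an absolute factor, then follows from the standard equivalence between the slice/derivation formulation $\iota_\xi(\text{ball})\subseteq(1-\text{const}\cdot\xi)(\text{ball})$ and the sequential formulation of the UKK$^\ast$ property [cf.\ \cite[\S 2]{gkl}]; alternatively these modulus estimates can be expressed through $\p,\psi$ and \eqref{ppsi}.

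The routine part is the norm comparison (i). The substance lies in two places. First, making the derivation calculus deliver a \emph{linear} modulus \emph{uniformly} for all $\xi\in[\tau,1]$ rather than merely at the base scale $\tau$ --- this is exactly where the additive nature of summability is exploited, and the step most easily mishandled. Second, the bookkeeping that isolates the dependence of $c$ on $M$: everything reduces to the number of nonempty derivation levels being $\leq CM/\tau$, so that the compounded weight $(1+c\tau)^{CM/\tau}$ stays bounded independently of $\tau$ as soon as $c\leq(\ln 2)/(CM)$. The $\tau$-dependence of the norm is intrinsic and cannot be removed: a norm satisfying (ii) with $\xi$ allowed down to $0$ would be a genuine Lipschitz UKK$^\ast$ renorming, forcing $X$ to embed into $c_0$ (cf.\ the Introduction).
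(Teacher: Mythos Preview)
Your approach differs from the paper's, and the step you yourself flag as ``most easily mishandled'' is indeed a gap.

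The paper does not build the norm by hand from iterated derivations. It works through the functions $\varphi$, $\psi$ and the tree-index $N(\sigma)$: by \cite[Lemma~4.3]{gkl}, finiteness of $N(\sigma)$ produces $\e_1,\ldots,\e_N$ (in general \emph{distinct}) with $\iota_{\e_1}\cdots\iota_{\e_N}B_{X^\ast}\neq\varnothing$ and $\sum_k\e_k>(3\sigma)^{-1}$; summability with constant $M$ then forces $N(\sigma)=\infty$ for $\sigma<(3M)^{-1}$. Via \cite[Thm.~4.4]{gkl} this gives $\varphi(\sigma)=0$ on that range, hence $\varphi^\ast(\tau)\geq(3CM)^{-1}\tau$, and the duality \eqref{ppsi} yields $\psi(\tau)\geq(12CM)^{-1}\tau$ for every $\tau\in[0,1]$. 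The norm is then supplied by the definition of $\psi$ together with \cite[Thm.~4.2]{gkl}. Note that full summability (variable $\e_i$) is genuinely used here, whereas your construction of $K$ invokes only the constant-$\e$ consequence $\Sz(X,\tau)\leq M/\tau+1$.

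More importantly, the paper first reduces (ii) to the single case $\xi=\tau$ by a one-line homogeneity argument: assuming (ii) at $\xi=\tau$ and given $\xi\geq\tau$,
\[
\liminf_{n}\abs{x^\ast+x_n^\ast}\geq\liminf_n\Bigl|\tfrac{\xi}{\tau}x^\ast+x_n^\ast\Bigr|-\Bigl(\tfrac{\xi}{\tau}-1\Bigr)=\tfrac{\xi}{\tau}\liminf_n\Bigl|x^\ast+\tfrac{\tau}{\xi}x_n^\ast\Bigr|-\tfrac{\xi}{\tau}+1\geq 1+c\xi,
\]
since $\abs{\tfrac{\tau}{\xi}x_n^\ast}\geq\tau$. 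This is precisely the device that replaces your ``merging estimate''. That estimate --- that a single $\iota_\xi$ applied to $K$ is worth roughly $\xi/\tau$ iterations of $\hat\iota_\tau$ --- is not part of the derivation calculus in \cite[\S2]{gkl} and is not true in general: for $\xi\geq\tau$ one has only $\iota_\xi K\subseteq\iota_\tau K$, with no mechanism to land inside $\hat\iota_\tau^{\,m}K$ for $m>1$. You say this is where ``the additive nature of summability is exploited'', but nothing in your argument after the construction of $K$ actually calls on summability again, so there is no input available to justify the step. With the homogeneity reduction in hand the difficulty evaporates: a norm that is good at the base scale $\tau$ is automatically good, with the same constant $c$, at every larger scale.
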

\begin{proof}[Proof outline]
First, observe that it is sufficient to prove condition (ii) only for $\xi=\tau$. Indeed, assuming that (ii) is valid for $\xi=\tau$ we have
\begin{equation*}
\begin{split}
\liminf_{n\to\infty}\abs{x^\ast+x_n^\ast} & \geq\liminf_{n\to\infty}\Big|\frac{\xi}{\tau}x^\ast+x_n^\ast\Big|-\Big(\frac{\xi}{\tau}-1\Big)\abs{x^\ast}\\
& =\frac{\xi}{\tau}\liminf_{n\to\infty}\Big|x^\ast+\frac{\tau}{\xi}x_n^\ast\Big|-\frac{\xi}{\tau}+1\geq 1+c\xi.
\end{split}
\end{equation*}

Suppose that $\sigma\in (0,1)$ and $N=N(\sigma)$ is finite. Then, by \cite[Lemma~4.3]{gkl}, there exist $\e_1,\ldots,\e_N\in (0,1)$ so that $\iota_{\e_1}\ldots\iota_{\e_N}B_{X^\ast}\not=\varnothing$ and $\sum_{k=1}^N\e_k>(3\sigma)^{-1}$. Therefore, $N(\sigma)=\infty$ for every $\sigma<(3M)^{-1}$. By \cite[Thm.~4.4]{gkl}, we have $N(\sigma)^{-1}\simeq_{\,C}\p(\sigma)$ for some absolute constant $C\leq 19200$, which implies that $\p(\sigma)=0$ for $\sigma<(3CM)^{-1}$, and hence
$$
\p^\ast(\tau)\geq\sup\bigl\{\tau\sigma\colon 0\leq\sigma<(3CM)^{-1}\bigr\}=(3CM)^{-1}\tau.
$$
Now, \eqref{ppsi} yields that $\psi(\tau)\geq (12CM)^{-1}\tau$ for every $\tau\in [0,1]$. This ensures the existence of a~$2$-equivalent norm $\abs{\,\cdot\,}$ on $X$ which satisfies the desired condition (ii). In fact, it can be taken so that (i) is also valid---{\it cf. }the construction given in the proof of \cite[Thm.~4.2]{gkl}.
\end{proof}

\begin{proposition}\label{p-renorming}
Let $X$ be a separable Banach space with $\Sz(X)=\omega$. Assume that $p(X)<r<q$ and let $B>0$ be any constant so that $\Sz(X,\e)\leq B\e^{-r}$ for every $\e>0$. Then there exist a~number $\gamma=\gamma(B,q,r)>0$ depending only on $B,q$ and $r$, and a~norm $\abs{\,\cdot\,}$ on $X$ satisfying (i) and having the following property:
\begin{itemize}
\item[{\rm (iii)}] if $x^\ast\in X^\ast$ and $(x_n^\ast)_{n=1}^\infty$ is a~weak$^\ast$-null sequence with $\abs{x_n^\ast}\geq\tau$ for some $\tau>0$ and every $n\in\N$, then
$$
\liminf_{n\to\infty}\abs{x^\ast+x_n^\ast}\geq\bigl(\abs{x^\ast}^q+\gamma\tau^q\bigr)^{\! 1/q}.
$$
\end{itemize}
\end{proposition}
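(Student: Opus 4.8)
The plan is to mirror the proof of Proposition~\ref{renorming}, but to track polynomial rather than merely linear behaviour of the relevant moduli. Write $r'=r/(r-1)$ for the conjugate exponent of $r$ (note that $r>p(X)\geq 1$ forces $r>1$, so $r'$ makes sense). The renorming construction underlying \cite[Thm.~4.2 and~4.8]{gkl} turns a lower estimate for the function $\psi$ into an equivalent norm with a corresponding dual modulus, so---exactly as in the proof of Proposition~\ref{renorming}, where a linear lower bound on $\psi$ was used---it suffices to establish an estimate $\psi(\tau)\geq c\,\tau^q$ for $\tau\in(0,1]$ with $c=c(B,r)>0$: this produces an equivalent norm $\abs{\,\cdot\,}$ satisfying (i) and, the dual ball being assembled from $\ell_q$-type blocks, condition~(iii) with $\gamma$ a function of $c$ and $q$ only. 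Since $r<q$ and $\tau\leq 1$, it is in turn enough to get $\psi(\tau)\geq c\,\tau^r$; and via \eqref{ppsi} together with the equivalence $N(\sigma)^{-1}\simeq_{\,C}\p(\sigma)$ from \cite[Thm.~4.4]{gkl} (with $C$ absolute), this reduces to a \emph{lower} bound $N(\sigma)\geq\lambda\,\sigma^{-r'}$ on $(0,1)$ with $\lambda=\lambda(B,r)>0$.

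To obtain the latter, I would first record a purely combinatorial consequence of the hypothesis $\Sz(X,\e)\leq B\e^{-r}$. Using only the two elementary monotonicity properties of the Szlenk derivation---$\iota_\e K\subseteq\iota_\de K$ when $\e\geq\de$, and $\iota_\e K\subseteq K$ always---and peeling the composition from the inside (discarding every factor $\iota_{\e_k}$ with $\e_k<\de$ and replacing every factor with $\e_k\geq\de$ by $\iota_\de$, the propagation using monotonicity of $\iota_\de$ in its set argument), one sees that for all $\e_1,\ldots,\e_N>0$ and all $\de>0$
\[
\iota_{\e_1}\cdots\iota_{\e_N}B_{X^\ast}\subseteq\iota_\de^{\,j}B_{X^\ast},\qquad\text{where }j=\#\{k\leq N\colon\e_k\geq\de\}.
\]
Hence, if the left-hand side is nonempty, $j<\Sz(X,\de)\leq B\de^{-r}$, which is to say that the non-increasing rearrangement of $(\e_k)$ obeys $\e_m^\ast\leq(B/m)^{1/r}$. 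Now if $N\coloneqq N(\sigma)$ is finite, \cite[Lemma~4.3]{gkl} supplies $\e_1,\ldots,\e_N\in(0,1)$ with $\iota_{\e_1}\cdots\iota_{\e_N}B_{X^\ast}\neq\varnothing$ and $\sum_{k=1}^N\e_k>(3\sigma)^{-1}$; combining this with the rearrangement bound and the elementary inequality $\sum_{m=1}^N m^{-1/r}\leq\frac{r}{r-1}N^{(r-1)/r}$ gives
\[
(3\sigma)^{-1}<\sum_{k=1}^N\e_k\leq B^{1/r}\sum_{m=1}^N m^{-1/r}\leq\frac{r}{r-1}\,B^{1/r}\,N^{(r-1)/r},
\]
so that $N\geq\lambda(B,r)\,\sigma^{-r'}$; the same inequality holds trivially when $N(\sigma)=\infty$.

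The remaining steps are the dual ones. From $N(\sigma)\geq\lambda\sigma^{-r'}$ and $N(\cdot)^{-1}\simeq_{\,C}\p$ one reads off $\p(\sigma)\leq c_2\,\sigma^{r'}$ with $c_2=c_2(B,r)$; a routine computation of the dual Young's function (a power $\sigma^{r'}$ becoming a multiple of $\tau^{r}$ on a suitable initial interval, together with $\p^\ast$ being non-decreasing on $[0,1]$) gives $\p^\ast(\tau)\geq c_2'\,\tau^{r}$ for all $\tau\in[0,1]$ with $c_2'=c_2'(B,r)$, and then \eqref{ppsi} yields $\psi(\tau)\geq\p^\ast(\tau/4)\geq c\,\tau^{r}\geq c\,\tau^{q}$ on $(0,1]$. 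In view of the first paragraph this completes the proof.

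The argument is therefore conceptually light but quantitatively delicate, and the main obstacle is precisely the bookkeeping: one must check that \emph{every} constant produced along the way---in the combinatorial lemma, in the absolute-constant equivalence of \cite[Thm.~4.4]{gkl}, in the dual-Young computation, in the constants $4$ and $8$ of \eqref{ppsi}, and in the renorming construction itself---is either absolute or a function of $B,q,r$ alone, so that the resulting $\gamma$ has the asserted dependence. A secondary but genuine point is that (iii) demands the homogeneous $\ell_q$-estimate for \emph{all} $\tau>0$, whereas $\psi$ only governs the dual modulus for $\tau$ in a bounded range; bridging this requires invoking the explicit ($\ell_q$-block) shape of the norm built in \cite[Thm.~4.2]{gkl} rather than a black-box ``power-type'' statement, and it is there that one should be most careful about the precise form of~$\gamma$.
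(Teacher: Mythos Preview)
Your route to the key modulus estimate is correct and genuinely different from the paper's. Where the paper passes through the \emph{convex} Szlenk index~$\Cz$---invoking \cite[Thm.~4.5]{gkl} to get $\Cz(X,\e)\leq A\e^{-r}$ and then \cite[Thm.~4.7]{gkl} to obtain, for each $k$, a scale-adapted norm $\abs{\,\cdot\,}_k$ with additive gain $c_1 2^{-kr}$ at scale $2^{-k}$---you instead use a clean rearrangement/counting argument directly on the Szlenk derivations to bound $N(\sigma)$ from below by $\lambda\sigma^{-r'}$, then dualize through $\p^\ast$ and \eqref{ppsi} to reach $\psi(\tau)\gtrsim\tau^r$. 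This is more elementary than the $\Cz$ route and keeps the constant trail transparent; the rearrangement bound $\e_m^\ast\leq(B/m)^{1/r}$ is a nice observation.

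The gap is in the final renorming step. A lower bound on $\psi$ says only that for each \emph{fixed} $\tau$ there is \emph{some} $2$-equivalent norm whose dual modulus is good at that particular $\tau$; it does not by itself hand you a single norm that works at all scales at once. You flag this, but your description of the remedy---``the explicit ($\ell_q$-block) shape of the norm built in \cite[Thm.~4.2]{gkl}''---is off: the relevant construction is not an $\ell_q$-assembly but a \emph{weighted $\ell_1$-sum}
\[
\abs{x^\ast}=c_{q,r}\sum_{k\geq 1}2^{k(r-q)}\abs{x^\ast}_k
\]
of the scale-adapted norms, and the $q$-power behaviour emerges only after one (a)~chooses precisely these weights $2^{k(r-q)}$, (b)~for a given $\tau$ selects the matching $k$ with $2^{-k}\asymp\tau$, and (c)~applies Bernoulli's inequality to convert the additive gain $1+\beta\tau^q$ into the homogeneous $\ell_q$-form. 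That computation is exactly what the paper carries out, and since the proposition is advertised as a \emph{quantitative} refinement of \cite[Thm.~4.8]{gkl}, deferring this step back to that reference is circular. Once you actually write it out, your argument and the paper's coincide from that point on.
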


\begin{proof}[Proof outline]
According to \cite[Thm.~4.5]{gkl}, there exists an absolute constant $D<10^6$ such that
$$
\Cz(X,\e)\leq\sum_{\substack{k\geq 0\\ 2^k\e/D\leq 1}}\! 2^k\!\cdot\! \Sz(X,2^kD^{-1}\e)\quad\mbox{for every }\e\in (0,1]
$$
and therefore $\Cz(X,\e)\leq A\e^{-r}$ for every $\e\in (0,1]$, where $A=BD^r(1-2^{1-r})^{-1}$. For each $k\in\N$, \cite[Thm.~4.7]{gkl} produces a~$2$-equivalent norm $\abs{\,\cdot\,}_k$ on $X$ (in fact, a~norm which satisfies $\frac{1}{2}\n{x}\leq\abs{x}_k\leq\n{x}$ for $x\in X$) such that for all $x^\ast,x_n^\ast\in X^\ast$ ($n\in\N$) satisfying $\abs{x^\ast}_k=1$, $x_n^\ast\xrightarrow[]{\,w\ast\,}0$ and $\abs{x_n^\ast}_k\geq 2^{-k}$, we have
$$
\liminf_{n\to\infty}\abs{x^\ast+x_n^\ast}_k\geq 1+\frac{1}{\Cz(X,(2^kC)^{-1})}\geq 1+c_1 2^{-kr},
$$
where $c_1=(AC^r)^{-1}$ and $C\leq 19200$ is an absolute constant. Define a~norm $\abs{\,\cdot\,}$ on $X^\ast$ by
$$
\abs{x^\ast}=\frac{1-2^{r-q}}{2^{r-q}}\sum_{k=1}^\infty 2^{k(r-q)}\abs{x^\ast}_k;
$$
it is plainly a~dual norm satisfying $\n{x^\ast}\leq\abs{x^\ast}\leq 2\n{x^\ast}$ for every $x^\ast\in X^\ast$.

Consider arbitrary $x^\ast,x_n^\ast\in X^\ast$ ($n\in\N$) with $\abs{x^\ast}=1$, $x_n^\ast\xrightarrow[]{\,w\ast\,}0$ and $\abs{x_n^\ast}\geq\tau$ for each $n\in\N$ and some $\tau\in (0,1)$. Pick $k\in\N$ satisfying $2^{-k}\leq\frac{1}{4}\tau<2^{1-k}$ and notice that $$\abs{x^\ast}_k\leq 2\n{x^\ast}\leq 2\abs{x^\ast}=2,$$
thus
$$
\abs{x_n^\ast}_k\geq\n{x_n^\ast}\geq\frac{1}{2}\abs{x_n^\ast}\geq\frac{1}{2}\tau\geq 2^{-k}\abs{x^\ast}_k.
$$
Therefore, for that value of $k$ we have
$$
\liminf_{n\to\infty}\abs{x^\ast+x_n^\ast}_k\geq\abs{x^\ast}_k(1+c_1 2^{-kr})
$$
and consequently,
\begin{equation*}
\begin{split}
\liminf_{n\to\infty}\abs{x^\ast+x_n^\ast} &\geq\frac{1-2^{r-q}}{2^{r-q}}\Bigl(\sum_{j\not=k}2^{j(r-q)}\abs{x^\ast}_j+2^{k(r-q)}\abs{x^\ast}_k(1+c_1 2^{-kr})\Bigr)\\
&=1+\frac{1-2^{r-q}}{2^{r-q}}\cdot c_1\abs{x^\ast}_k 2^{-kq}\geq 1+\beta\tau^q,
\end{split}
\end{equation*}
with some $\beta=\beta(B,q,r)$ (we used the fact that $\abs{x^\ast}_k\geq \n{x^\ast}\geq\frac{1}{2}\abs{x^\ast}=\frac{1}{2}$).

Now, consider any non-zero $x^\ast\in X^\ast$ and any weak$^\ast$-null sequence $(x_n^\ast)_{n=1}^\infty$ with $\abs{x_n^\ast}\geq\tau$ for each $n\in\N$. By Bernoulli's inequality and the conclusion of the preceding paragraph, we have
$$
\liminf_{n\to\infty}\abs{x^\ast+x_n^\ast}^q\geq \abs{x^\ast}^q\Bigl(1+\beta\frac{\tau^q}{\abs{x^\ast}^q}\Bigr)^{\!q}\geq \abs{x^\ast}^q\Bigl(1+\beta q\frac{\tau^q}{\abs{x^\ast}^q}\Bigr)=\abs{x^\ast}^q+\beta q\tau^q,
$$
whence the assertion follows with $\gamma=\beta q$.
\end{proof}

\section{$c_0$-sums}
We start with a lemma saying that a~uniform bound for constants of summability of the Szlenk index passes to finite $c_0$-sums.
\begin{lemma}\label{uniform}
Suppose $X_1,\ldots,X_N$ are Banach spaces having summable Szlenk index with the same constant $M$. Then the Banach space $Y=(\bigoplus_{j=1}^N X_j)_{c_0}$ has summable Szlenk index with constant $4M$.
\end{lemma}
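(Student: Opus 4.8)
The plan is to work with the characterization of the Szlenk derivation in terms of weak$^\ast$-null tree-maps provided by Lemma~\ref{gkl_lemma}, or—since we want to track constants precisely for a finite $c_0$-sum—to work more directly with the Szlenk derivations of $B_{Y^\ast}$. Recall that $Y^\ast = (\bigoplus_{j=1}^N X_j^\ast)_{\ell_1}$, so a functional $y^\ast \in B_{Y^\ast}$ is a tuple $(x_1^\ast,\ldots,x_N^\ast)$ with $\sum_{j=1}^N \n{x_j^\ast}\leq 1$, and the weak$^\ast$ topology on $B_{Y^\ast}$ is the product of the weak$^\ast$ topologies on the coordinate balls (scaled appropriately). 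The key structural fact I would establish first is a \emph{coordinatewise decomposition of the Szlenk derivation}: if $\iota_{\e_1}\cdots\iota_{\e_n}B_{Y^\ast}\neq\varnothing$, then one can split the indices $\{1,\ldots,n\}$ into $N$ (possibly empty, not-necessarily-contiguous) blocks and distribute the thresholds $\e_i$ among the coordinates $j=1,\ldots,N$ in such a way that, roughly, for each $j$ the corresponding sub-collection of thresholds survives an iterated Szlenk derivation of $B_{X_j^\ast}$, up to a fixed multiplicative loss. This is where the $c_0$-structure is used essentially: a $w^\ast$-open neighborhood of a point in the $\ell_1$-sum that has large diameter must, by the triangle inequality, have large diameter in at least one coordinate, so each successive derivation `charges' its threshold (up to a constant like $1/2$ for splitting across coordinates, and a further $1/2$ for the diameter-versus-radius passage) to one of the $N$ coordinate spaces.

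Concretely, the induction I would run is: for $m=n,n-1,\ldots,0$, one shows that $\iota_{\e_{m+1}}\cdots\iota_{\e_n}B_{Y^\ast}$ is contained in a union of `boxes' $\prod_j r_j B_{X_j^\ast}$ indexed by the possible distributions of the already-processed thresholds among the coordinates, where the radii $r_j$ reflect how much each coordinate has already been eroded. The bookkeeping amounts to: whenever we apply $\iota_{\e_i}$ and a point survives, there is a coordinate $j$ in whose ball (of current radius $r_j$) an iterated $(\tfrac12\e_i)$-derivation is nonempty, so $\tfrac12\e_i$ gets added to the `budget' spent in coordinate $j$. Since each $X_j$ has summable Szlenk index with constant $M$ (and this is scale-invariant in the obvious way—$\iota_\e(rB_{X^\ast})$ relates to $r\,\iota_{\e/r}B_{X^\ast}$), the total budget charged to coordinate $j$ is at most $2M$ (the extra factor $2$ from the diameter/radius passage folded in appropriately, or from the $\tfrac14$ in Lemma~\ref{gkl_lemma} if one routes through tree-maps). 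Summing over the $N$ coordinates and accounting for the cross-coordinate splitting factor gives $\sum_{i=1}^n \e_i \leq 4M$, as claimed—one factor of $2$ from splitting a large $\ell_1$-diameter across coordinates, one from diameter versus radius.

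The main obstacle I anticipate is making the `distribution of thresholds among coordinates' rigorous without the bookkeeping exploding: a priori a single derivation step could be `charged' ambiguously, and different points surviving the derivation could force different coordinates, so one has to carry a \emph{finite union of boxes} (one per choice function from processed steps to coordinates) rather than a single box, and check that the bound on the total budget holds uniformly over this union. A clean way to avoid the combinatorial blow-up is to fix, at the outset, a single surviving point $y^\ast\in\iota_{\e_1}\cdots\iota_{\e_n}B_{Y^\ast}$ and a single `witnessing branch' down through the derivations, rather than tracking the whole set; then the choice function is determined, the budget inequality in each coordinate follows from the single-space summability hypothesis applied to the ball $r_j B_{X_j^\ast}$, and summing gives the result. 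The only remaining delicate point is the passage from `$w^\ast$-open neighborhood in $Y^\ast$ has large diameter' to `its projection to some coordinate has large diameter in $r_j B_{X_j^\ast}$': one picks two points realizing (most of) the diameter, notes their difference has $\ell_1$-norm exceeding $\e_i$ hence some coordinate-difference exceeds $\e_i/N$—but that is too weak; instead one should derive only that \emph{some} coordinate has diameter $>\e_i/2$ by a pigeonhole-free argument using that the two chosen points can be taken to differ in a \emph{single} coordinate after an additional small perturbation, or alternatively route everything through Lemma~\ref{gkl_lemma}, building a $w^\ast$-null tree-map in $Y^\ast$, splitting it coordinatewise, and invoking the sufficiency direction in each $X_j$. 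I would present the tree-map route, as it localizes the constant-tracking to a single application of Lemma~\ref{gkl_lemma} at the end.
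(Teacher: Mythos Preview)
Your plan has a genuine gap in the accounting, and it is exactly the point you flag as ``delicate'' but then resolve incorrectly. You propose to charge each derivation step $\e_i$ to a \emph{single} coordinate $j(i)$, asserting that ``some coordinate has diameter $>\e_i/2$''. This is false: in the $\ell_1$-sum $Y^\ast$, a set of diameter exceeding $\e_i$ may have each coordinate projection of diameter only about $\e_i/N$ (take two points differing by $\e_i/N$ in every coordinate). The suggested fix---perturbing the two witness points so they differ in a single coordinate---does not work, and routing through Lemma~\ref{gkl_lemma} does not help either: projecting a $Y^\ast$-tree-map to coordinate $j$ gives a tree-map whose node norms have no uniform lower bound in terms of $\e_{|a|}$ alone, so the sufficiency direction cannot be invoked with level-wise thresholds. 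Even granting your claim, the budget argument would only yield $\sum_i \e_i \leq c\,NM$ for some absolute $c$: each coordinate's budget is at most $M r_j$ with $\sum_j r_j\leq 1$, but if each $\e_i$ contributes to only one coordinate you cannot cancel the $N$ when you sum. (Also note that Lemma~\ref{gkl_lemma} requires separability, which is not assumed here.)

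The missing idea, and what the paper does, is to distribute each $\e_i$ \emph{fractionally} among all coordinates simultaneously. Concretely, one shows
\[
\iota_\e(K_1\times\cdots\times K_N)\subseteq \bigcup_{(q_1,\ldots,q_N)}\iota_{q_1\e/2}K_1\times\cdots\times\iota_{q_N\e/2}K_N,
\]
where the union runs over nonnegative rationals with $\sum_j q_j\geq 1-\delta$ (any $\delta>0$). Iterating inside a covering of $B_{Y^\ast}$ by products $\prod_j (k_j/m)B_{X_j^\ast}$ with $\sum_j k_j/m\leq 1+\delta$, one gets for each coordinate $\sum_i q_{i,j}\e_i\leq 4M\,k_j/m$ from the summability of $X_j$; summing over $j$ now gives $(1-\delta)\sum_i\e_i\leq 4M(1+\delta)$, hence $\sum_i\e_i\leq 4M$. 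The point is that the fractions $q_{i,j}$ make the left-hand sides add up to $\sum_i\e_i$ without any factor of $N$.
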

\begin{proof}
First, notice that
\begin{equation}\label{L1}
B_{Y^\ast}\subseteq\bigcup_{k_1+\ldots+k_N\leq m+N}\frac{k_1}{m}B_{X_1^\ast}\times\ldots\times \frac{k_N}{m}B_{X_N^\ast}\quad\mbox{for each }m\in\N.
\end{equation}
Indeed, since $Y^\ast=(\bigoplus_{j=1}^N X_j^\ast)_{\ell_1}$, every functional $y^\ast\in B_{Y^\ast}$ has the form $y^\ast=(x_1^\ast,\ldots,x_N^\ast)$ with $x_j^\ast\in X_j^\ast$ and $\sum_{j=1}^N\n{x_j^\ast}\leq 1$. For each $j=1,\ldots,N$ pick the integer $k_j$ so that $(k_j-1)/m<\n{x_j^\ast}\leq k_j/m$. Then each $x_j^\ast\in k_j/m B_{X_j^\ast}$ and 
$$
\frac{1}{m}\Bigl(\sum_{j=1}^Nk_j-N\Bigr)<\sum_{j=1}^N\n{x_j^\ast}=\n{y^\ast}\leq 1,
$$
which yields $\sum_{j=1}^Nk_j\leq m+N$.

Now, for any natural number $r$ denote by $\Theta(r)$ the collection of all $N$-tuples $(q_1,\ldots,q_N)$ satisfying $\sum_{j=1}^Nq_j\geq 1-N/r$, where each $q_j$ is a~non-negative fraction with denominator $r$. We {\it claim} that for all weak$^\ast$-compact sets $K_j\subset X_j^\ast$ ($j=1,\ldots,N$), and any $\e>0$, we have
\begin{equation}\label{L2}
\iota_\e(K_1\times\ldots\times K_N)\subseteq\bigcup_{(q_1,\ldots,q_N)\in\Theta(r)}\iota_{q_1\e/2}K_1\times\ldots\times \iota_{q_N\e/2}K_N\quad\mbox{for each }r\in\N.
\end{equation}
For this, consider any $y^\ast=(x_1^\ast,\ldots,x_N^\ast)\in\iota_\e(K_1\times\ldots\times K_N)$. Then there exists a~net $(y_\alpha^\ast)_{\alpha\in A}\subset K_1\times\ldots\times K_N$, $y_\alpha^\ast=(x_{\alpha,1}^\ast,\ldots,x_{\alpha,N}^\ast)$, which is weak$^\ast$-convergent to $y^\ast$ and satisfies $\n{y_\alpha^\ast-y^\ast}>\e/2$. By passing to a~subnet we may assume that for each $j=1,\ldots,N$ the net $(\n{x_{\alpha,j}^\ast-x_j^\ast})_{\alpha\in A}$ converges to a~non-negative number $\delta_j/2$. For every $j=1,\ldots,N$ pick a~fraction $q_j$ with denominator $r$ so that $q_j\e<\delta_j\leq(q_j+1/r)\e$ (for $\delta_j=0$ we take $q_j=0$ and use the convention $\iota_0K_j=K_j$). Plainly, $x_j^\ast\in\iota_{q_j\e/2}K_j$ for each $j$, and also
$$
\e\leq 2\lim_\alpha\n{y_\alpha^\ast-y^\ast}=2\lim_\alpha\sum_{j=1}^N\n{x_{\alpha,j}^\ast-x_j^\ast}=\sum_{j=1}^N\delta_j\leq\Bigl(\sum_{j=1}^Nq_j+\frac{N}{r}\Bigr)\e
$$
which proves \eqref{L2}.

Our next {\it claim} is the one which allows us to include iterates of the Szlenk derivations of $B_{Y^\ast}$ into products of derivations of $B_{X_j^\ast}$'s. Namely, for all $\e>0$ and $m,r\in\N$ we have
\begin{equation}\label{L3}
\begin{split}
\iota_\e B_{Y^\ast}\subseteq \bigcup_{k_1+\ldots+k_N\leq m+N}\, \bigcup_{(q_1,\ldots,q_N)\in\Theta(r)}\, &\frac{k_1}{m}\iota_{\eta_1}B_{X_1^\ast}\times\ldots\times \frac{k_N}{m}\iota_{\eta_N}B_{X_N^\ast},\\
&\mbox{where }\eta_j=\frac{mq_j\e}{4k_j}\,\mbox{ for }j=1,\ldots,N.
\end{split}
\end{equation}
In order to prove this, note the following two elementary properties of the Szlenk derivation:
\begin{itemize}
\item $\iota_\e(aK)=a\iota_{\e/a}K$;
\item $\iota_\e(\ccup_{i=1}^sK_i)\subseteq\ccup_{i=1}^s\iota_{\e/2}K_i$ (see \cite[Lemma~3.1]{brooker} for a~more general statement),
\end{itemize}
which are valid for all weak$^\ast$-compact sets $K,K_1,\ldots,K_s$ and $a,\e>0$. Combining them with \eqref{L1} and \eqref{L2} we obtain
\begin{equation*}
\begin{split}
\iota_\e B_{Y^\ast} &\subseteq\iota_\e\Biggl(\,\,\bigcup_{k_1+\ldots+k_N\leq m+N}\frac{k_1}{m}B_{X_1^\ast}\times\ldots\times \frac{k_N}{m}B_{X_N^\ast}\,\Biggr)\\
&\subseteq\bigcup_{k_1+\ldots+k_N\leq m+N}\iota_{\e/2}\Biggl(\frac{k_1}{m}B_{X_1^\ast}\times\ldots\times \frac{k_N}{m}B_{X_N^\ast}\Biggr)\\
&\subseteq\bigcup_{k_1+\ldots+k_N\leq m+N}\, \bigcup_{(q_1,\ldots,q_N)\in\Theta(r)}\,\iota_{q_1\e/4}\frac{k_1}{m}B_{X_1^\ast}\times\ldots\times \iota_{q_N\e/4}\frac{k_N}{m}B_{X_N^\ast}\\
&=\bigcup_{k_1+\ldots+k_N\leq m+N}\, \bigcup_{(q_1,\ldots,q_N)\in\Theta(r)}\, \frac{k_1}{m}\iota_{\eta_1}B_{X_1^\ast}\times\ldots\times \frac{k_N}{m}\iota_{\eta_N}B_{X_N^\ast},
\end{split}
\end{equation*}
as desired.

Finally, fix $\e_1,\ldots,\e_n>0$ so that $\iota_{\e_1}\ldots\iota_{\e_n}B_{Y^\ast}\not=\varnothing$, and pick arbitrary $m,r\in\N$. In view of \eqref{L3}, we have
\begin{equation*}
\begin{split}
\iota_{\e_1}\iota_{\e_2}B_{Y^\ast} &\subseteq \iota_{\e_1}\Biggl(\,\,\bigcup_{k_1+\ldots+k_N\leq m+N}\, \bigcup_{(q_{2,1},\ldots,q_{2,N})\in\Theta(r)} \frac{k_1}{m}\iota_{\eta_{2,1}}B_{X_1^\ast}\times\ldots\times \frac{k_N}{m}\iota_{\eta_{2,N}}B_{X_N^\ast}\,\Biggr)\\
&\subseteq\bigcup_{k_1+\ldots+k_N\leq m+N}\, \bigcup_{(q_{2,1},\ldots,q_{2,N})\in\Theta(r)}\,\iota_{\e_1/2}\Biggl(\frac{k_1}{m}\iota_{\eta_{2,1}}B_{X_1^\ast}\times\ldots\times \frac{k_N}{m}\iota_{\eta_{2,N}}B_{X_N^\ast}\Biggr)\\
&\subseteq\bigcup_{k_1+\ldots+k_N\leq m+N}\, \bigcup_{\substack{(q_{1,1},\ldots,q_{1,N})\in\Theta(r)\\ (q_{2,1},\ldots, q_{2,N})\in\Theta(r)}}\, \frac{k_1}{m}\iota_{\eta_{1,1}}\iota_{\eta_{2,1}}B_{X_1^\ast}\times\ldots\times \frac{k_N}{m}\iota_{\eta_{1,N}}\iota_{\eta_{2,N}}B_{X_N^\ast},
\end{split}
\end{equation*}
where $\eta_{i,j}=mq_{i,j}\e_i/4k_j$ for $i=1,2$ and $j=1,\ldots,N$, whence by induction we obtain
$$
\iota_{\e_1}\ldots\iota_{\e_n}B_{Y^\ast}\subseteq\bigcup_{k_1+\ldots+k_N\leq m+N}\, \bigcup_{\substack{(q_{1,1},\ldots,q_{1,N})\in\Theta(r)\\ \vdots\\ (q_{n,1},\ldots,q_{n,N})\in\Theta(r)}}\, \frac{k_1}{m}\iota_{\eta_{1,1}}\ldots\iota_{\eta_{n,1}}B_{X_1^\ast}\times\ldots\times \frac{k_N}{m}\iota_{\eta_{1,N}}\ldots\iota_{\eta_{n,N}}B_{X_N^\ast}
$$
with $\eta_{i,j}=mq_{i,j}\e_i/4k_j$ for $i=1,\ldots,n$ and $j=1,\ldots,N$.

Since $\iota_{\e_1}\ldots\iota_{\e_n}B_{Y^\ast}\not=\varnothing$, one of the summands above must be non-empty, and hence we have $\iota_{\eta_{1,j}}\ldots\iota_{\eta_{n,j}}B_{X_j^\ast}\not=\varnothing$ for each $j=1,\ldots,N$ which by the assumption implies that $\sum_{i=1}^n\eta_{i,j}\leq M$, i.e.
$$
\sum_{i=1}^nq_{i,j}\e_i\leq\frac{4k_jM}{m}\quad\mbox{for each }j=1,\ldots,N.
$$
Adding these inequalities over all $j$'s yields
$$
\sum_{i=1}^n\Bigl(1-\frac{N}{r}\Bigr)\e_i\leq\sum_{i=1}^n\sum_{j=1}^Nq_{i,j}\e_i\leq 4M\sum_{j=1}^N\frac{k_j}{m}\leq 4M\cdot\frac{m+N}{m},
$$
thus by letting $m,r\to\infty$ we obtain $\sum_{i=1}^n\e_i\leq 4M$.
\end{proof}

We now proceed to our first main result. The following standard notation will be used: if $X$ is a~direct sum of $X_n$'s, then for any interval $I\subseteq\N$, $P_I$ stands for canonical projection onto the corresponding direct sum of $X_n$'s with $n\in I$; for any $j\in\N$, $P_j$ stands for $P_{\{j\}}$. If a~given direct sum decomposition is a~dual decomposition, then $P_I$'s are adjoints to the corresponding canonical inclusion operators, and hence they are weak$^\ast$-to-weak$^\ast$ continuous. We will be using this fact without mentioning.

\begin{theorem}\label{c0-sum}
For any sequence $(X_n)_{n=1}^\infty$ of separable Banach spaces with uniformly summable Szlenk index, the Banach space $X=(\bigoplus_{n=1}^\infty X_n)_{c_0}$ has summable Szlenk index.
\end{theorem}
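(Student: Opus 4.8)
The plan is to reduce the infinite $c_0$-sum to the finite-sum case handled in Lemma~\ref{uniform}, exploiting the fact that in a $c_0$-sum the ``tail'' $X^\ast$-functionals on the late coordinates carry essentially all of the norm and behave, for Szlenk-derivation purposes, like functionals on a finite sub-sum. Concretely, suppose the $X_n$'s have summable Szlenk index with common constant $M$, and fix $\e_1,\dots,\e_n>0$ with $\iota_{\e_1}\cdots\iota_{\e_n}B_{X^\ast}\neq\varnothing$; the goal is to bound $\sum_{i=1}^n\e_i$ by an absolute multiple of $M$. Since $X^\ast=(\bigoplus_{k=1}^\infty X_k^\ast)_{\ell_1}$, any $x^\ast\in B_{X^\ast}$ decomposes as $x^\ast=P_{[1,N]}x^\ast+P_{[N+1,\infty)}x^\ast$, and the key observation is that the weak$^\ast$-topology of $B_{X^\ast}$ behaves, on the second summand, like the weak$^\ast$-topology of the (finite-dimensional-coefficient) $\ell_1$-sum; more precisely, for fixed $N$ the map $x^\ast\mapsto P_{[1,N]}x^\ast$ is weak$^\ast$-to-norm continuous when restricted to the late coordinates in the sense that small perturbations of $x^\ast$ in weak$^\ast$-sense that are large in norm must live (up to arbitrarily small error) beyond any fixed finite block.

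The first step I would take is to make this precise via a Szlenk-derivation inclusion: for every $N\in\N$ and every $\e>0$,
$$
\iota_\e B_{X^\ast}\subseteq \bigcup_{k_0+\cdots+k_N\leq m+N+1}\ \Bigl(\tfrac{k_0}{m}B_{(\bigoplus_{j>N}X_j)_{c_0}^\ast}\times \tfrac{k_1}{m}\iota_{\eta_1}B_{X_1^\ast}\times\cdots\times\tfrac{k_N}{m}\iota_{\eta_N}B_{X_N^\ast}\Bigr),
$$
with suitable $\eta_j$ proportional to $\e$, by the same argument as in the proof of \eqref{L3}: split $B_{X^\ast}$ into a product of the first $N$ coordinate-balls times the tail ball using the $\ell_1$-structure as in \eqref{L1}, use $\iota_\e(aK)=a\iota_{\e/a}K$ and $\iota_\e(\bigcup K_i)\subseteq\bigcup\iota_{\e/2}K_i$ together with the product-derivation inclusion \eqref{L2} (which, inspecting its proof, works for an infinite product just as well since only finitely many coordinates are ever ``activated''). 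Iterating this inclusion $n$ times, exactly as in the last display of the proof of Lemma~\ref{uniform}, gives that nonemptiness of $\iota_{\e_1}\cdots\iota_{\e_n}B_{X^\ast}$ forces, for each coordinate $j\in\{1,\dots,N\}$, nonemptiness of an iterated derivation $\iota_{\eta_{1,j}}\cdots\iota_{\eta_{n,j}}B_{X_j^\ast}$ with $\eta_{i,j}=mq_{i,j}\e_i/(4k_j)$ — and \emph{also}, crucially, nonemptiness of $\iota_{\eta_{1,0}}\cdots\iota_{\eta_{n,0}}B_{(\bigoplus_{j>N}X_j)_{c_0}^\ast}$ for the tail.

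The second step is to iterate the $N\to\infty$ passage. Summing $\sum_i q_{i,j}\e_i\le 4k_jM/m$ over $j=1,\dots,N$ and using $\sum_j q_{i,j}\geq \sum_{j=0}^N q_{i,j}-(q_{i,0}+\text{error})$ will not by itself close the argument, because the tail contribution $q_{i,0}\e_i$ is not controlled by any single $X_j$. The resolution is to observe that the tail space $(\bigoplus_{j>N}X_j)_{c_0}$ has the \emph{same} summability structure, so one can recurse: applying the whole inclusion again inside the tail ball with a fresh parameter $N'>N$ peels off coordinates $N{+}1,\dots,N'$ and produces a new, smaller tail. Since for fixed $\e_1,\dots,\e_n$ the derivations $\iota_{\e_i}$ eventually annihilate the ball of any space with summable Szlenk index once $\sum\e_i$ is large, after peeling off enough coordinates the remaining tail contribution is forced to vanish; quantitatively, one shows $\sum_i q_{i,0}\e_i$ can be taken as small as desired by choosing $N$ large, because the mass that the late-coordinate perturbations can carry in a $c_0$-sum, relative to a \emph{fixed} finite collection of derivation scales $\e_i$, is bounded and tends to $0$ as the block recedes. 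Feeding this back, the inequality $\sum_i(1-N/r)(1-\text{tail error})\e_i \le 4M(m+N+1)/m$ survives the limits $m,r\to\infty$ and then $N\to\infty$, yielding $\sum_{i=1}^n\e_i\leq 4M$, hence $X$ has summable Szlenk index with constant $4M$.

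The main obstacle, and the place where care is genuinely needed, is controlling the tail term: unlike in the finite case of Lemma~\ref{uniform}, one cannot simply ``add over all $j$'' and be done, because there is always an uncaptured last block. I expect the cleanest route is to argue that for a fixed finite sequence $(\e_i)_{i=1}^n$, the quantity $\sup\{\sum_i \delta_i : \iota_{\delta_1}\cdots\iota_{\delta_n}B_{(\bigoplus_{j>N}X_j)_{c_0}^\ast}\neq\varnothing,\ \delta_i\le\e_i\}$ is non-increasing in $N$ and to run the recursion so as to extract that each round strictly transfers mass from the tail to the genuinely-$X_j$ coordinates; combined with a compactness/stabilization argument (the derivation scales $\eta_{i,j}$ that actually occur are bounded below away from $0$ once the corresponding iterated derivation is nonempty, by the uniform summability constant $M$), this forces the process to terminate with negligible tail. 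An alternative, possibly shorter, route is to bypass the recursion entirely: approximate $X$ from below by the finite sums $(\bigoplus_{n=1}^N X_n)_{c_0}$, note these have summable Szlenk index with the \emph{uniform} constant $4M$ by Lemma~\ref{uniform}, and use the fact that a nonempty iterated Szlenk derivation of $B_{X^\ast}$ witnesses, via a fixed weak$^\ast$-null tree-map of finite height (Lemma~\ref{gkl_lemma}), a configuration supported — up to an arbitrarily small perturbation controlled by the $c_0$-norm — on finitely many coordinates, thereby transferring the witness into some $(\bigoplus_{n=1}^N X_n)_{c_0}$ with essentially the same $\e_i$'s; then summability with constant $4M$ for $X$ is immediate. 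I would attempt the second route first.
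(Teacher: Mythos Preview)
Both routes have genuine gaps. In route~1 the recursion is circular: for every $N$ the tail $(\bigoplus_{j>N}X_j)_{c_0}$ is again a $c_0$-sum of spaces with the same summability constant $M$, so there is no reason the ``tail mass'' $\sum_i q_{i,0}\e_i$ should shrink as $N$ grows. Indeed, the iterated derivation can be witnessed entirely within the tail (take $q_{i,j}=0$ for $1\leq j\leq N$ and $q_{i,0}\approx 1$), and peeling off further coordinates simply reproduces the original problem. Route~2 fails because a weak$^\ast$-null tree-map in $X^\ast=(\bigoplus_k X_k^\ast)_{\ell_1}$ cannot in general be localized to a fixed finite sub-sum: the successor sequence $(x_{a\frown m}^\ast)_m$ being weak$^\ast$-null is perfectly compatible with its mass escaping in the coordinate index (for instance $x_{a\frown m}^\ast\in X_m^\ast$ with $\n{x_{a\frown m}^\ast}=\tfrac14\e_{\abs{a}+1}$). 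For any fixed $N$ the projection $P_{[1,N]}$ then annihilates all but finitely many successors, destroying the lower norm bound one needs to invoke Lemma~\ref{gkl_lemma} inside $Y_N=(\bigoplus_{k\leq N}X_k)_{c_0}$; no passage to a full subtree rescues this.

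The paper's proof does start from the tree-map of Lemma~\ref{gkl_lemma} and does feed Lemma~\ref{uniform} into the argument, but the bridge between them is a renorming ingredient you are missing. By Proposition~\ref{renorming}, every finite block $Z=(\bigoplus_{i\in I}X_i)_{c_0}$ (which has summable Szlenk index with constant $4M$ by Lemma~\ref{uniform}) carries a $2$-equivalent dual norm $\abs{\,\cdot\,}$ for which adding a weak$^\ast$-null sequence of $\abs{\,\cdot\,}$-norm at least $\xi$ to a unit vector increases the norm by at least $c\xi$, with $c=c(4M)$ depending only on $M$. The proof then does \emph{not} try to push the whole tree into a finite sub-sum; instead it constructs a \emph{single} partial branch $(\nu_1,\ldots,\nu_n)$ inductively, adaptively growing the block $[1,N_k]$. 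At each step one distinguishes whether the next successor has significant mass beyond $N_k$ (then enlarge the block and gain $\tfrac{c}{4(1+c)}\e_{k+1}$ via the $\ell_1$-structure of the dual) or is essentially supported on $[1,N_k]$ (then the renorming estimate on the current block supplies the same gain). This yields $\abs{z_n^\ast}_{1,\ldots,n}\geq\tfrac{c}{4(1+c)}\sum_j\e_j-n\eta$, which together with $\n{\sum_{a\in\beta}x_a^\ast}\leq 1$ bounds $\sum_j\e_j$ by an absolute multiple of $M$. The uniform constant $c$ coming from the renorming is exactly what lets the argument avoid the circularity of your route~1 and the localization obstruction of your route~2.
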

\begin{proof}
Let $M$ be a~constant of summability of the Szlenk index for all the spaces $X_n$ ($n\in\N$) and let $c=c(4M)$ be as in Lemma~\ref{renorming}. Fix arbitrarily small $\eta>0$.

Assume $\e_1,\ldots,\e_n>0$ are such that $\iota_{\e_1}\ldots\iota_{\e_n}B_{X^\ast}\not=\varnothing$. Then there exists a~weak$^\ast$-null tree-map $(x_a^\ast)_{a\in S}$ in $X^\ast$ such that $\n{x_a^\ast}\geq\frac{1}{4}\e_{\abs{a}}$ for each $a\in S$ with $1\leq\abs{a}\leq n$, and $\n{\sum_{a\in\beta}x_a^\ast}\leq 1$ for every branch $\beta\subset S$. We are going to define inductively a~sequence $(x_{(\nu_1,\ldots,\nu_k)}^\ast\colon k=0,1,\ldots,n)$ assigned to $S$ along the partial branch determined by a~certain node $(\nu_1,\ldots,\nu_n)\in S$.

Set $\nu_1=1$ and choose $N_1\geq 1$ so large that $\n{P_{(N_1,\infty)}x_{(\nu_1)}^\ast}<\eta$. Define 
$$
Z_1=\Bigl(\bigoplus_{i=1}^{N_1}X_i\Bigr)_{c_0}\,\,\,\mbox{and }z_1^\ast=P_{[1,N_1]}x_{(\nu_1)}^\ast\in Z_1^\ast.
$$
By Lemma~\ref{uniform}, the space $Z_1$ has summable Szlenk index with constant $4M$ and therefore we may apply Lemma~\ref{renorming} to $Z_1$ and
\begin{equation}\label{tau}
\tau\coloneqq\frac{\eta}{2n(1+\eta)}.
\end{equation}
We obtain a~norm $\abs{\,\cdot\,}_1$ on $Z_1$ such that its dual norm satisfies $\n{x^\ast}\leq\abs{x^\ast}_1\leq 2\n{x^\ast}$ for every $x^\ast\in Z^\ast$, as well as condition (ii) with $Z_1$ instead of $Z$ and $\abs{\,\cdot\,}_1$ instead of $\abs{\,\cdot\,}$.

Let $1\leq k<n$ and suppose we have already chosen integers $0\eqqcolon N_0<N_1<\ldots<N_k$ and a~partial branch $((\nu_1,\ldots,\nu_i)\colon i\leq k)$ of $S$. Define
$$
Z_j=\Bigl(\bigoplus_{i=N_{j-1}+1}^{N_j}X_i\Bigr)_{c_0},\,\mbox{ so that }\,\,Z_j^\ast=\Bigl(\bigoplus_{i=N_{j-1}+1}^{N_j}X_i^\ast\Bigr)_{\ell_1}\quad\mbox{for }j=1,\ldots,k
$$
and suppose we have also picked certain norms $\abs{\,\cdot\,}_1,\ldots,\abs{\,\cdot\,}_k$ on $Z_1,\ldots,Z_k$, respectively. For each $1\leq j\leq k$ define $\abs{\,\cdot\,}_{1,\ldots,j}$ as the norm on the $\ell_1$-direct sum $((Z_1^\ast,\abs{\,\cdot\,}_1)\oplus\ldots\oplus (Z_j^\ast,\abs{\,\cdot\,}_j))_{\ell_1}$. (We use the same notation for dual norms as for the corresponding predual ones.) Define $z_1^\ast,\ldots,z_k^\ast$ by the formula 
$$
z_j^\ast=z_{j-1}^\ast+P_{[1,N_j]}x_{(\nu_1,\ldots,\nu_j)}^\ast\quad\mbox{for }j=2,\ldots,k.
$$
Our induction hypothesis says that for each $j=1,\ldots,k$ the following clues are satisfied:
\begin{itemize}
\item[(h1)] $\n{P_{(N_j,\infty)}x_{(\nu_1,\ldots,\nu_j)}^\ast}<\eta$;

\vspace*{1mm}
\item[(h2)] $\frac{1}{2}\n{x}\leq\abs{x}_j\leq\n{x}$ for every $x\in Z_j$, so that $\n{x^\ast}\leq\abs{x^\ast}_j\leq 2\n{x^\ast}$ for every $x^\ast\in Z_j^\ast$;

\vspace*{1mm}
\item[(h3)]  if $x^\ast\in Z_j^\ast$, $\abs{x^\ast}_j=1$ and $(x_m^\ast)_{m=1}^\infty\subset Z_j^\ast$ is a~weak$^\ast$-null sequence with $\abs{x_m^\ast}_j\geq\xi\geq\tau$ for $m\in\N$, then $\liminf_{m\to\infty}\abs{x^\ast+x_m^\ast}_j\geq 1+c\xi$;

\vspace*{1mm}
\item[(h4)] $\displaystyle{\abs{z_j^\ast}_{1,\ldots,j}\geq \frac{c}{4(1+c)}(\e_1+\ldots+\e_j)-j\eta}$.
\end{itemize}
Now, we shall consider two possibilities.

\vspace*{2mm}\noindent
{\bf (p$^\prime$):} $\n{P_{(N_k,\infty)}x_{(\nu_1,\ldots,\nu_k,\nu)}^\ast}>\frac{c}{4(1+c)}\e_{k+1}$ for infinitely many $\nu$'s. Since
$$
P_{[1,N_k]}x_{(\nu_1,\ldots,\nu_k,\nu)}^\ast\xrightarrow[\,\,\nu\to\infty\,\,]{w^\ast}0\,\,\mbox{ in }(Z_1^\ast\oplus\ldots\oplus Z_k^\ast)_{\ell_1},
$$
we can then choose $\nu\in\N$ to guarantee that 
$$
\bigl|z_k^\ast+P_{[1,N_k]}x_{(\nu_1,\ldots,\nu_k,\nu)}^\ast\bigr|_{1,\ldots,k}>\abs{z_k^\ast}_{1,\ldots,k}-\eta,
$$
and then arrange also $N_{k+1}>N_k$ so that
$$
\bigl\|P_{(N_k,N_{k+1}]}x_{(\nu_1,\ldots,\nu_k,\nu)}^\ast\bigr\|>\frac{c}{4(1+c)}\e_{k+1}\,\,\mbox{ and }\,\,\bigl\|P_{(N_{k+1},\infty)}x_{(\nu_1,\ldots,\nu_k,\nu)}^\ast\bigr\|<\eta.
$$
Choose any norm $\abs{\,\cdot\,}_{k+1}$ on the space $Z_{k+1}=(\bigoplus_{i=N_k+1}^{N_{k+1}}X_i)_{c_0}$ according to Lemma~\ref{renorming} (for the same fixed parameter $\tau$ as earlier). Define also $\abs{\,\cdot\,}_{1,\ldots,k+1}$ and $z_{k+1}^\ast$ as previously, taking $j=k+1$. Then our induction hypothesis (h4) gives
\begin{equation*}
\begin{split}
\abs{z_{k+1}^\ast}_{1,\ldots,k+1} &>\abs{z_k^\ast}_{1,\ldots,k}+\frac{c}{4(1+c)}\e_{k+1}-\eta\\
&\geq \frac{c}{4(1+c)}(\e_1+\ldots+\e_{k+1})-(k+1)\eta,
\end{split}
\end{equation*}
and hence (h4) is valid for $j=k+1$. Plainly, the same is true for (h1)--(h3).

\vspace*{2mm}\noindent
{\bf (p$^{\prime\prime}$):} (p$^\prime$) is false. In this case for all $\nu$'s from some infinite set $\mathcal{N}\subseteq\N$ we have $\n{P_{(N_k,\infty)}x_{(\nu_1,\ldots,\nu_k,\nu)}^\ast}\leq \frac{c}{4(1+c)}\e_{k+1}$ and since $\n{x_a^\ast}\geq\frac{1}{4}\e_{k+1}$ for every $a\in S$ with $\abs{a}=k+1$, we must have
\begin{equation}\label{P1N}
\bigl|P_{[1,N_k]}x_{(\nu_1,\ldots,\nu_k,\nu)}^\ast\bigr|_{1,\ldots,k}\geq\frac{\e_{k+1}}{4(1+c)}\quad\mbox{for each }\nu\in\mathcal{N}.
\end{equation}
Now, we shall exploit properties of the norms $\abs{\,\cdot\,}_1,\ldots\abs{\,\cdot\,}_k$. Recall that all the values of our tree-map have norms at most $n$. Since $z_k^\ast$ is one of those values after executing $k$ cuts, each resulting in decrease of norm by at most $\eta$, we have $\n{z_k^\ast}\leq n(1+\eta)$, thus $\abs{z_k^\ast}_{1,\ldots,k}\leq 2n(1+\eta)$. For each $j=1,\ldots,k$ set $I_j=(N_{j-1},N_j]$ and observe that on the one hand we have
\begin{equation}\label{inf1}
\liminf_{\nu\in\mathcal{N}}\,\bigl|P_{I_j}(z_k^\ast+x_{(\nu_1,\ldots,\nu_k,\nu)}^\ast)\bigr|_j\geq\bigl|P_{I_j}z_k^\ast\bigr|_j\quad\mbox{(because }P_{I_j}x_{(\nu_1,\ldots,\nu_k,\nu)}^\ast\xrightarrow[\,\nu\to\infty\,]{w^\ast}0),
\end{equation}
and on the other, property (h3) implies that for every infinite set $\mathcal{M}\subseteq\mathcal{N}$ we have
\begin{equation}\label{inf2}
\begin{split}
\liminf_{\nu\in\mathcal{M}}\,\bigl|P_{I_j}(z_k^\ast+x_{(\nu_1,\ldots,\nu_k,\nu)}^\ast)\bigr|_j &=\bigl|P_{I_j}z_k^\ast\bigr|_j\!\cdot\liminf_{\nu\in\mathcal{M}}\,\Biggl|\frac{P_{I_j}z_k^\ast}{\bigl|P_{I_j}z_k^\ast\bigr|_j}+\frac{P_{I_j}x_{(\nu_1,\ldots,\nu_k,\nu)}^\ast}{\bigl|P_{I_j}z_k^\ast\bigr|_j}\Biggr|_j\\
& \geq \bigl|P_{I_j}z_k^\ast\bigr|_j+c\!\cdot\!\liminf_{\nu\in\mathcal{M}}\, \bigl|P_{I_j}x_{(\nu_1,\ldots,\nu_k,\nu)}^\ast\bigr|_j,
\end{split}
\end{equation}
provided that $\liminf_{\nu\in\mathcal{M}}\,\vert P_{I_j}x_{(\nu_1,\ldots,\nu_k,\nu)}^\ast\vert_j\geq\tau\abs{P_{I_j}z_k^\ast}_j$. Fix, for a~moment, any infinite set $\mathcal{M}\subseteq\mathcal{N}$ and define
$$
a_j=\liminf_{\nu\in\mathcal{M}}\,\bigl|P_{I_j}x_{(\nu_1,\ldots,\nu_k,\nu)}^\ast\bigr|_j\,\,\mbox{ and }\,\, b_j=\bigl|P_{I_j}z_k^\ast\bigr|_j\quad\mbox{for }j=1,\ldots,k.
$$
Then, in view of \eqref{inf1} and \eqref{inf2}, we have
\begin{equation*}
\begin{split}
\liminf_{\nu\in\mathcal{M}}\,\bigl|z_k^\ast+P_{[1,N_k]} &x_{(\nu_1,\ldots,\nu_k,\nu)}^\ast\bigr|_{1,\ldots,k}\geq \sum_{j=1}^k\liminf_{\nu\in\mathcal{M}}\,\bigl|P_{I_j}(z_k^\ast+x_{(\nu_1,\ldots,\nu_k,\nu)}^\ast)\bigr|_j\\
&\geq \sum_{\{j\colon a_j<\tau b_j\}}b_j+\sum_{\{j\colon a_j\geq\tau b_j\}}\!\!(b_j+ca_j)=\abs{z_k^\ast}_{1,\ldots,k}+c\!\!\!\sum_{\{j\colon a_j\geq\tau b_j\}}a_j.
\end{split}
\end{equation*}
Observe that 
\begin{equation*}
\begin{split}
\sum_{\{j\colon a_j\geq\tau b_j\}}\!\!\! a_j &=\sum_{j=1}^ka_j-\!\!\!\!\sum_{\{j\colon a_j<\tau b_j\}}\!\!\! a_j\geq\sum_{j=1}^ka_j-\tau\!\!\!\!\sum_{\{j\colon a_j<\tau b_j\}}b_j\\
&\geq\sum_{j=1}^ka_j-\tau\sum_{j=1}^kb_j\geq\sum_{j=1}^ka_j-2n\tau(1+\eta),
\end{split}
\end{equation*}
therefore by picking $\mathcal{M}$ so that all the sequences $(\vert P_{I_j}x_{(\nu_1,\ldots,\nu_k,\nu)}^\ast\vert_j)_{\nu\in\mathcal{M}}$ converge and using \eqref{P1N} we arrive at
\begin{equation*}
\begin{split}
\liminf_{\nu\in\mathcal{M}}\,\bigl|z_k^\ast &+P_{[1,N_k]}x_{(\nu_1,\ldots,\nu_k,\nu)}^\ast\bigr|_{1,\ldots,k} \geq \bigl|z_k^\ast\bigr|_{1,\ldots,k}+c\Bigl(\sum_{j=1}^ka_j-2n\tau(1+\eta)\Bigr)\\
&=\bigl|z_k^\ast\bigr|_{1,\ldots,k}+c\cdot\lim_{\nu\in\mathcal{M}}\bigl|P_{[1,N_k]}x_{(\nu_1,\ldots,\nu_k,\nu)}^\ast\bigr|_{1,\ldots,k}-2nc\tau(1+\eta)\\
&\geq \bigl|z_k^\ast\bigr|_{1,\ldots,k}+\frac{c\e_{k+1}}{4(1+c)}-2nc\tau(1+\eta)>\bigl|z_k^\ast\bigr|_{1,\ldots,k}+\frac{c\e_{k+1}}{4(1+c)}-\eta;
\end{split}
\end{equation*}
the last inequality follows from \eqref{tau} and the fact that $c<1$. Therefore, by the induction hypothesis (h4), there exists $\nu_{k+1}\in\N$ for which
$$
\bigl|z_k^\ast+P_{[1,N_k]}x_{(\nu_1,\ldots,\nu_{k+1})}^\ast\bigr|_{1,\ldots,k}\geq \frac{c}{4(1+c)}\sum_{j=1}^{k+1}\e_j-(k+1)\eta.
$$

Now, pick $N_{k+1}>N_k$ so that (h1) holds true for $j=k+1$, define $Z_{k+1}=(\bigoplus_{i=N_k+1}^{N_{k+1}}X_i)_{c_0}$ and use Lemma~\ref{renorming} to produce a~norm $\abs{\,\cdot\,}_{k+1}$ on $Z_{k+1}$ fulfilling conditions (h2) and (h3) for $j=k+1$. The last estimate shows that (h4) is also satisfied with $j=k+1$ and
$$
z_{k+1}^\ast=z_k^\ast+P_{[1,N_{k+1}]}x_{(\nu_1,\ldots,\nu_{k+1})}^\ast.
$$
This finishes the inductive construction.

Having defined the partial branch $\beta=((\nu_1,\ldots,\nu_i)\colon i\leq n)$, notice that
\begin{equation*}
\begin{split}
1 &\geq \Biggl\|\sum_{a\in\beta}x_a^\ast\Biggr\|\geq\Biggl\|\sum_{k=1}^nP_{[1,N_k]}x_{(\nu_1,\ldots,\nu_k)}^\ast\Biggr\|-n\eta\\
&\geq\frac{1}{2}\bigl|z_n^\ast\bigr|_{1,\ldots,n}\!\!-n\eta\geq\frac{c}{8(1+c)}\sum_{j=1}^n\e_j-\frac{3}{2}n\eta,
\end{split}
\end{equation*}
which completes the proof.
\end{proof}
As a consequence, we infer that the $c_0$-sum of countably many copies of the original Tsirelson space, $c_0(\TT)$, has summable Szlenk index. Note that this does not follow directly from \cite[Prop.~6.7]{kos} (see Theorem~\ref{kos_theorem} below), since the natural basis of $(c_0(\TT))^\ast$ does not admit any blocking with an~asymptotic $\ell_1$ structure.  

\section{More general direct sums}
Although $c_0$ is by far the most prototypical example of a Banach space with summable Szlenk index, there are also other interesting ones having unconditional basis. Therefore, it is natural to ask whether Theorem~\ref{c0-sum} remains valid for more general sums than $c_0$-sums. Before revealing the fact that, unfortunately, it is not the case, let us recall some basic facts concerning infinite direct sums of Banach spaces.

Let $\EE$ be a Banach space with a~normalized, $1$-unconditional basis $(e_n)_{n=1}^\infty$ and $(X_n)_{n=1}^\infty$ be a~sequence of Banach spaces. The $\EE$-direct sum of $(X_n)_{n=1}^\infty$, denoted $(\bigoplus_{n=1}^\infty X_n)_\EE$, is the Banach space consisting of all sequences $(x_n)_{n=1}^\infty\in\prod_{n=1}^\infty X_n$ for which the series $\sum_{n=1}^\infty\n{x_n}e_n$ converges, equipped with the coordinatewise operations and the norm
$$
\n{(x_n)_{n=1}^\infty}=\Biggl\|\sum_{n=1}^\infty\n{x_n}e_n\Biggr\|.
$$

Let $(e_n^\ast)_{n=1}^\infty\subset\EE^\ast$ be the sequence of biorthogonal functionals corresponding to the basis $(e_n)_{n=1}^\infty$ and let $\FF=\oo{\mathrm{span}}\{e_n^\ast\colon n\in\N\}$. Duality is described with the aid of an operator 
$$
\Upsilon\colon \Bigl(\bigoplus_{n=1}^\infty X_n^\ast\Bigr)_{\!\!\FF}\xrightarrow[]{\quad} \Bigl(\bigoplus_{n=1}^\infty X_n\Bigr)_{\!\!\EE}^{\!\!\ast},\quad \langle\boldsymbol{x},\Upsilon(\boldsymbol{x^\ast})\rangle=\sum_{n=1}^\infty\langle x_n,x_n^\ast\rangle\,\mbox{ for }\boldsymbol{x}=(x_n)_{n=1}^\infty,\,\boldsymbol{x^\ast}=(x_n^\ast)_{n=1}^\infty.
$$
This definition makes sense as 
$$
\sum_{n=1}^\infty\abs{\langle x_n,x_n^\ast\rangle}\leq \sum_{n=1}^\infty\n{x_n}\n{x_n^\ast}=\Biggl|\Biggl\langle\sum_{n=1}^\infty\n{x_n}e_n,\sum_{n=1}^\infty\n{x_n^\ast}e_n^\ast\Biggr\rangle\Biggr|\leq\n{\boldsymbol{x}}\n{\boldsymbol{x^\ast}}.
$$
According to \cite[Lemma~4.2]{laustsen} the operator $\Upsilon$ is always an isometry. Moreover, we have the following list of equivalent conditions saying when $\Upsilon$ is actually an~isometric isomorphisms (see \cite[Prop.~4.8]{laustsen}):
\begin{itemize*}
\item[(a)] $\Upsilon$ is surjective for any sequence $(X_n)_{n=1}^\infty$ of Banach spaces;
\item[(b)] the basis $(e_n)_{n=1}^\infty$ is shrinking, i.e. for every $\xi\in\EE^\ast$ we have $\n{\xi\vert_{\oo{\mathrm{span}}\{e_n\colon n\geq k\}}}\xrightarrow[k\to\infty]{}0$;

\vspace*{-1mm}\item[(c)] $\EE^\ast=\FF$.
\end{itemize*}

\begin{example}\label{T(c0)}
Let $\TT$ be Tsirelson's space, so that its dual $\TT^\ast$ is the space $T$ considered in \cite{figiel_johnson}, that is, the completion of $c_{00}$ with respect to the norm $\n{\cdot}$ satisfying the implicit formula
$$
\n{x}=\n{x}_{c_0}\,\vee\,\frac{1}{2}\sup\Biggl\{\sum_{j=1}^k\n{E_jx}\colon \{k\}<E_1<\ldots<E_k\Biggr\}\quad\mbox{for }x\in c_{00}
$$
(with the usual notation $Ex=\ind_E\!\cdot\! x$ and $E<F$ if $\max E<\min F$ or either of these sets is empty). Despite the fact that $\TT$ has summable Szlenk index (see \cite[Prop.~6.7]{kos}), the $\TT$-direct sum $\TT(c_0)$ of infinitely many copies of $c_0$ does not. Indeed, since $\TT^\ast$ is not isomorphic to $\ell_1$, there is a~sequence $(\boldsymbol{t}_k)_{k=1}^\infty\subset\TT^\ast$,
$$
\boldsymbol{t}_k=(t_{k,1},\ldots,t_{k,n_k},0,0,\ldots)\,\,\,\mbox{ with }t_{k,i}>0\,\mbox{ for each }i=1,\ldots,n_k,
$$
such that $\n{\boldsymbol{t}_k}=1$ and $\sum_{i=1}^{n_k}t_{k,i}\to\infty$ as $k\to\infty$. For a~fixed $k\in\N$ and each $j=1,\ldots,n_k$ we choose any weak$^\ast$-null sequence $(\xi_{j,m})_{m=1}^\infty\subset\ell_1$ so that $\n{\xi_{j,m}}=t_{k,j}$. We construct a~weak$^\ast$-null tree-map $(x_a^\ast)_{a\in S}$ of height $n_k+1$ by requiring that for every $a\in S$ with $\abs{a}<n_k$ we have 
$$
x_{a\smallfrown m}^\ast=\bigl(\underbrace{0,\ldots,0}_{\abs{a}\text{ times}},\xi_{\abs{a}+1,m},0,0,\ldots\bigr)\quad\mbox{for }m>\max a.
$$
Then observe that for every branch $\beta\subset S$ there are indices $m_1,\ldots,m_{n_k}$ such that
$$
\sum_{a\in\beta}x_a^\ast=\bigl(\xi_{1,m_1},\ldots,\xi_{n_k,m_{n_k}},0,0,\ldots\bigr)
$$
and hence $\n{\sum_{a\in\beta}x_a^\ast}\leq 1$. We have shown that  $\iota_{t_{k,1}}\ldots\iota_{t_{k,n_k}}B_{\TT(c_0)^\ast}\not=\varnothing$ for every $k\in\N$
and since $\sum_{i=1}^{n_k}t_{k,i}\to\infty$, we conclude that $\TT(c_0)$ fails to have summable Szlenk index. 
\end{example}

The success of Example~\ref{T(c0)} was based on the fact that already the partial finite direct sums had `constants of summability' growing to infinity. This suggests that summability of Szlenk index may be preserved whenever we can avoid that particular~situation, and this is indeed the sense of our next result. It can be also derived from a~result due to Knaust, Odell and Schlumprecht \cite{kos} (see Theorem~\ref{kos_theorem} below) by using the fact that every unconditional basis of a~space with separable dual must be shrinking ({\it cf. }\cite[Theorem~3.3.1]{ak}), and hence it gives rise to a~natural finite-dimensional decomposition of the space $X$ below. Here, we present a~slightly different approach.
\begin{lemma}[{\it cf. }{\cite[Lemma~1.c.11]{lt}}]\label{weak_conv}
Let $(e_k;e_k^\ast)_{k=1}^\infty$ be an~unconditional basis of a~Banach space $X$. Suppose that $(x_n)_{n=1}^\infty$ is a~bounded sequence in $X$ such that for every $x^\ast\in X^\ast$ the limit $\lim_{n\to\infty}x^\ast(x_n)$ exists and $\lim_{n\to\infty}e_k^\ast(x_n)=0$ for each $k\in\N$. Then $x_n\xrightarrow[]{\,\,w\,\,}0$.
\end{lemma}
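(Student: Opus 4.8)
The plan is to argue by contradiction, reducing via a sliding‑hump argument to a semi‑normalised block basic sequence and then exploiting the dichotomy that such a sequence, if it is not weakly null, must span a copy of $\ell_1$ --- which will clash with weak Cauchyness. Write $P_m$ for the canonical basis projection onto $\span\{e_1,\dots,e_m\}$; since $(e_k)$ is unconditional, $\sup_m\n{P_m}$ is finite (bounded by the unconditional constant $K_u$) and $P_my\to y$ for every $y\in X$. The hypothesis $\lim_n e_k^\ast(x_n)=0$ says precisely that $\n{P_mx_n}\to 0$ as $n\to\infty$, for each fixed $m$. Suppose now that $(x_n)$ does not converge weakly to $0$. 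Then there are $x^\ast\in X^\ast$, a number $\e>0$ and a subsequence with $\abs{x^\ast(x_{n_j})}\ge\e$ for all $j$; after passing to a further subsequence (and, in the complex case, replacing $x^\ast$ by a unimodular multiple and passing to real parts) we may assume $x^\ast(x_{n_j})\ge\e$ for every $j$, so in particular $\n{x_{n_j}}\ge\e/\n{x^\ast}>0$.

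Next I would run the standard Bessaga--Pe\l czy\'nski sliding hump on $(x_{n_j})$: alternately using $\n{x_{n_j}-P_mx_{n_j}}\to 0$ as $m\to\infty$ and $\n{P_mx_n}\to 0$ as $n\to\infty$, one produces integers $0=m_0<m_1<m_2<\cdots$ and a further subsequence, still written $(x_{n_j})$, such that the block vectors $u_j\coloneqq(P_{m_j}-P_{m_{j-1}})x_{n_j}$ satisfy $\n{x_{n_j}-u_j}<4^{-j}\e/\n{x^\ast}$. Then $(u_j)$ is a block basic sequence of $(e_k)$, hence an unconditional basic sequence with constant at most $K_u$; it is semi‑normalised, with $\e/(2\n{x^\ast})\le\n{u_j}\le 2\sup_n\n{x_n}$; it inherits weak Cauchyness from $(x_{n_j})$ (since $\n{x_{n_j}-u_j}\to 0$); and $x^\ast(u_j)\ge\e/2$ for every $j$.

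It remains to see that such a sequence $(u_j)$ cannot exist. Because $x^\ast(u_j)\ge\e/2>0$, for non‑negative scalars $a_1,\dots,a_N$ one has $\n{\sum_{j\le N}a_ju_j}\ge\n{x^\ast}^{-1}\sum_{j\le N}a_j x^\ast(u_j)\ge(\e/2\n{x^\ast})\sum_{j\le N}a_j$; by unconditionality ($\n{\sum a_ju_j}\ge K_u^{-1}\n{\sum\abs{a_j}u_j}$) this passes to arbitrary scalars, so $(u_j)$ dominates the unit vector basis of $\ell_1$, while being semi‑normalised it is also dominated by it. Hence the closed linear span of $(u_j)$ is isomorphic to $\ell_1$. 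But $(u_j)$ is weakly Cauchy in $X$, hence (extending functionals by Hahn--Banach) weakly Cauchy in that subspace; by the Schur property of $\ell_1$ it would then be norm‑Cauchy, which is impossible since $\n{u_i-u_j}$ is bounded away from $0$. This contradiction shows $x^\ast(x_n)\to 0$, and as $x^\ast\in X^\ast$ was arbitrary, $x_n\to 0$ weakly.

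I expect the only genuinely nontrivial point to be the $\ell_1$‑dichotomy step in the last paragraph --- namely that a semi‑normalised unconditional (block) basic sequence which fails to be weakly null must be $\ell_1$‑equivalent, which then contradicts weak Cauchyness (this is essentially where Rosenthal's theorem is hiding). The sliding hump and the uniform boundedness of the basis projections are routine bookkeeping.
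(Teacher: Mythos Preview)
Your argument is correct. Note that the paper itself supplies no proof of this lemma --- it is recorded with a bare citation to \cite[Lemma~1.c.11]{lt} --- so there is no ``paper's own proof'' to compare against. Your route (sliding hump to a semi-normalised block sequence of the unconditional basis, then the observation that a uniform lower bound from a fixed functional together with unconditionality forces $\ell_1$-equivalence, which clashes with weak Cauchyness via the Schur property) is exactly the classical argument behind the cited result in Lindenstrauss--Tzafriri.

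Two cosmetic remarks, neither affecting correctness. First, since by hypothesis $\lim_n x^\ast(x_n)$ exists for every $x^\ast$, your bad functional automatically satisfies $x^\ast(x_n)\to c\neq 0$, so the initial passage to a subsequence is unnecessary. Second, your closing clause ``as $x^\ast\in X^\ast$ was arbitrary'' is slightly misphrased: $x^\ast$ was the specific witness coming from the contradiction hypothesis, and the contradiction already yields weak nullity directly. Finally, your aside that ``Rosenthal's theorem is hiding'' in the last paragraph is a little misleading: you never invoke the $\ell_1$-dichotomy, only unconditionality plus Schur, which is strictly more elementary.
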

\begin{theorem}\label{E-sum}
Let $\EE$ be a Banach space with an unconditional basis and summable Szlenk index. Then for every sequence $(X_n)_{n=1}^\infty$ of finite-dimensional Banach spaces the space $X=(\bigoplus_{n=1}^\infty X_n)_\EE$ has summable Szlenk index.
\end{theorem}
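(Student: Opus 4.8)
The plan is to transfer the problem from $X$ to $\EE$ via the tree-map characterisation of Lemma~\ref{gkl_lemma}. Two reductions come first. Since $\EE$ has an unconditional basis, after an equivalent renorming—which replaces $X$ by an isomorphic space and hence preserves having summable Szlenk index—we may assume that the basis $(e_n)_{n=1}^\infty$ of $\EE$ is normalized and $1$-unconditional, so that the duality operator $\Upsilon$ is available. Next, summability of the Szlenk index of $\EE$ forces $\Sz(\EE)\leq\omega$, hence $\EE^\ast$ is separable; as $(e_n)$ is unconditional it is therefore shrinking (\cite[Thm.~3.3.1]{ak}), whence $\EE^\ast=\FF$, the sequence $(e_n^\ast)$ is a (boundedly complete) Schauder basis of $\EE^\ast$, and $\Upsilon$ is an isometric isomorphism. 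Consequently $X^\ast=(\bigoplus_{n=1}^\infty X_n^\ast)_{\EE^\ast}$ isometrically; writing $x^\ast=(x_n^\ast)_{n=1}^\infty$ for a generic element of $X^\ast$, we have $\n{x^\ast}=\bigl\|\sum_n\n{x_n^\ast}e_n^\ast\bigr\|_{\EE^\ast}$, we use the projections $P_I$ associated to the decomposition $(X_n^\ast)$, and we note that $\n{P_{(N,\infty)}x^\ast}\to 0$ as $N\to\infty$.

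Now fix $\e_1,\ldots,\e_n>0$ with $\iota_{\e_1}\ldots\iota_{\e_n}B_{X^\ast}\neq\varnothing$ and a parameter $\eta\in(0,\tfrac18\min_i\e_i)$. By the necessity part of Lemma~\ref{gkl_lemma} there is a weak$^\ast$-null tree-map $(x_a^\ast)_{a\in S}$ in $X^\ast$, vanishing for $\abs{a}>n$, with $\n{x_a^\ast}\geq\tfrac14\e_{\abs{a}}$ for $1\leq\abs{a}\leq n$ and $\n{\sum_{a\in\beta}x_a^\ast}\leq 1$ for every branch $\beta$. The central step is to prune $S$ to a full subtree $T$ carrying a \emph{block} version of this tree-map. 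Proceeding by induction on depth, one constructs $T$ together with intervals $I_a\subset\N$ ($a\in T$) so that, writing $u_a^\ast=P_{I_a}x_a^\ast$: (i) $\n{x_a^\ast-u_a^\ast}<2\eta$; (ii) along every branch the $I_a$ are pairwise disjoint with $\max I_a<\min I_b$ whenever $b$ is a successor of $a$; (iii) for each $a\in T$, $\min I_b\to\infty$ as $b$ runs over the successors of $a$ in $T$. To pass from depth $d$ to $d+1$, given $I_a=(p_a,q_a]$: first enlarge $q_a$ so that $\n{P_{(q_a,\infty)}x_a^\ast}<\eta$; then, using that $(x_{a\smallfrown m}^\ast)_m$ is weak$^\ast$-null and that on the finite-dimensional space $P_{[1,N]}X^\ast$ the weak$^\ast$- and norm topologies coincide (this is where finite-dimensionality of the $X_n$ is essential), select infinitely many successors $a\smallfrown m$, with values $p_{a\smallfrown m}\geq q_a$ tending to $\infty$, such that $\n{P_{[1,p_{a\smallfrown m}]}x_{a\smallfrown m}^\ast}<\eta$; finally pick $q_{a\smallfrown m}>p_{a\smallfrown m}$ with the corresponding tail again below $\eta$, and set $I_{a\smallfrown m}=(p_{a\smallfrown m},q_{a\smallfrown m}]$.

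With the block tree-map in hand, lift it to $\EE^\ast$ by $v_a^\ast=\sum_{k\in I_a}\n{(u_a^\ast)_k}e_k^\ast$ for $a\in T$ (and $v_\varnothing^\ast=0$). Since $u_a^\ast$ is supported in $I_a$, we get $\n{v_a^\ast}_{\EE^\ast}=\n{u_a^\ast}_{X^\ast}\geq\n{x_a^\ast}-2\eta\geq\tfrac14\e_{\abs{a}}-2\eta>0$ for $1\leq\abs{a}\leq n$; and because the $I_a$ are pairwise disjoint along a branch $\beta$ there is no cancellation, so that $\n{\sum_{a\in\beta}v_a^\ast}_{\EE^\ast}=\n{\sum_{a\in\beta}u_a^\ast}_{X^\ast}\leq\n{\sum_{a\in\beta}x_a^\ast}+\sum_{a\in\beta}\n{x_a^\ast-u_a^\ast}<1+2n\eta$. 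Moreover, (iii) together with $\n{P_{(N,\infty)}e}_{\EE}\to 0$ for $e\in\EE$ shows that for each $a\in T$ the sequence $(v_b^\ast)$ over successors $b$ of $a$ is weak$^\ast$-null in $\EE^\ast$, being uniformly bounded and supported on sets running off to infinity. Identifying $T$ canonically with $S$ and rescaling by $(1+2n\eta)^{-1}$, we obtain a weak$^\ast$-null tree-map on $S$ in $\EE^\ast$ which, by the sufficiency part of Lemma~\ref{gkl_lemma}, certifies $\iota_{\delta_1}\ldots\iota_{\delta_n}B_{\EE^\ast}\neq\varnothing$ with $\delta_i=(\tfrac14\e_i-2\eta)/(1+2n\eta)>0$. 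If $M$ is a constant of summability for $\EE$, this gives $\sum_i\delta_i\leq M$, i.e. $\tfrac14\sum_i\e_i-2n\eta\leq M(1+2n\eta)$; letting $\eta\to 0^+$ yields $\sum_i\e_i\leq 4M$, so $X$ has summable Szlenk index with constant $4M$.

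The main obstacle is the pruning step: one must simultaneously arrange that the blocks $u_a^\ast$ approximate $x_a^\ast$, that their supports are successive along each branch—so that the $\EE$-sum norm is transmitted to $\EE^\ast$ without loss (the naive lift $v_a^\ast=\sum_k\n{x_{a,k}^\ast}e_k^\ast$ performed without blocking only gives $\n{\sum_{a\in\beta}v_a^\ast}\geq\n{\sum_{a\in\beta}x_a^\ast}$, the wrong inequality, owing to cancellation inside the $X_k^\ast$)—and that the supports of siblings escape to infinity, which is what makes the lifted tree-map weak$^\ast$-null. Finite-dimensionality of the summands $X_n$ is used precisely to secure the second and third of these requirements.
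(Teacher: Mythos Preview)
Your argument is correct, and it yields the explicit constant $4M$. The approach, however, is genuinely different from the paper's. You work on the \emph{dual} side with Lemma~\ref{gkl_lemma}: starting from a weak$^\ast$-null tree-map in $X^\ast$, you prune to a block tree-map (this is where finite-dimensionality of the $X_n$ enters, via weak$^\ast$=norm on finite-dimensional pieces), and then the lift $v_a^\ast=\sum_{k\in I_a}\n{(u_a^\ast)_k}e_k^\ast$ to $\EE^\ast$ preserves branch norms exactly thanks to disjointness of supports. The paper instead works on the \emph{primal} side, via the $N(\sigma)$ characterisation from \cite[Thm.~4.10]{gkl}: from a weakly null tree-map $(\boldsymbol{x}_a)$ in $X$ one forms $y_a=\sum_k\n{x_k^{(a)}}e_k\in\EE$, and now the triangle inequality goes the \emph{right} way, giving $\n{\sum_{a\in\beta}y_a}\geq\n{\sum_{a\in\beta}\boldsymbol{x}_a}>1$ with no blocking at all. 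Finite-dimensionality is used only to see that each coordinate $\n{x_k^{(a\smallfrown n)}}\to 0$; the resulting tree-map in $\EE$ is then only coordinatewise null, and an extra step (Lemma~\ref{weak_conv} plus a diagonal argument showing $\w N(\sigma)=N(\sigma)$) upgrades this to weakly null. So the paper trades your blocking construction for a short lattice-monotonicity trick together with a weak-convergence lemma; your route is more hands-on but stays entirely within Lemma~\ref{gkl_lemma} and delivers the constant directly. One small point worth making explicit in your write-up: the uniform boundedness of $(v_b^\ast)$ over successors (needed for weak$^\ast$-nullness) follows from $\n{v_b^\ast}=\n{u_b^\ast}\leq\n{x_b^\ast}\leq 2$, the last inequality being a consequence of weak$^\ast$ lower semicontinuity and $\n{\sum_{a\in\beta}x_a^\ast}\leq 1$.
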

\begin{proof}
Let $(e_n)_{n=1}^\infty$ be a~normalized, $1$-unconditional basis of $\EE$ and let $(e_n^\ast)_{n=1}^\infty$ be the corresponding sequence of biorthogonal functionals. We consider two~quantities connected with weakly null tree-maps: $N(\sigma)$ (already defined in Section~2) and its formally `weaker' version $\w{N}(\sigma)$. Namely, for any $\sigma\in (0,1)$ the number $N(\sigma)\in\N\cup\{\infty\}$ ($\w{N}(\sigma)$, respectively) is the least natural number $N$ for which there exists a~tree-map $(x_a)_{a\in S}$ in $\EE$ of height $N+1$ such that:
\begin{itemize}
\item $x_{a\smallfrown n}\xrightarrow[]{\,\,w\,\,}0$ for every $a\in S$ (respectively: $e_k^\ast(x_{a\smallfrown n})\xrightarrow[]{\quad}0$ for all $a\in S$ and $k\in\N$);
\item $\n{x_a}\leq\sigma$ for every $a\in S$;
\item $\bigl\|\sum_{a\in\beta}x_a\bigr\|>1$ for every branch $\beta\subset S$,
\end{itemize}
or is equal to $\infty$ if no such $N$ exists.

In fact, we have $N(\sigma)=\w{N}(\sigma)$ for every $\sigma\in (0,1)$. Indeed, the inequality ${\w{N}(\sigma)\leq N(\sigma)}$ is obvious; for the converse, assume that we are given a~tree-map $(x_a)_{a\in S}$ in $X$ with the properties listed in the definition of $\w{N}(\sigma)$. Since $\EE^\ast$ is separable, to every sequence of the form $(x_{a\smallfrown n})_{n>\max a}$ we can apply the standard diagonal procedure in order to obtain a~subsequence $(x_{a\smallfrown n_k})_{k=1}^\infty$ such that the limit $\lim_{k\to\infty} x^\ast(x_{a\smallfrown n_k})$ exists for every $x^\ast\in\EE^\ast$. According to Lemma~\ref{weak_conv} we have then $x_{a\smallfrown n_k}\!\!\!\xrightarrow[]{\,\,w\,\,}0$. By an obvious inductive procedure, going in the direction of increasing heights, we obtain a~full subtree $T\subseteq S$ such that $(x_a)_{a\in T}$ is a~weakly null tree-map in $\EE$. Hence $\w{N}(\sigma)\geq N(\sigma)$.

Assume that $X$ does not have summable Szlenk index; then \cite[Theorem~4.10]{gkl} implies that for every $\sigma\in (0,1)$ there is a~weakly null tree-map $(\boldsymbol{x}_a)_{a\in S}$ in $X$ such that $\n{\boldsymbol{x}_a}\leq\sigma$ for $a\in S$ and $\n{\sum_{a\in\beta}\boldsymbol{x}_a}>1$ for every branch $\beta\subset S$. Now, consider a~tree-map $(y_a)_{a\in S}$ in $\EE$ given by
$$
y_a=\sum_{k=1}^\infty\n{x_k^{(a)}}e_k ,\quad\mbox{where }\boldsymbol{x}_a=(x_k^{(a)})_{k=1}^\infty.
$$
Of course, $\n{y_a}=\n{\boldsymbol{x}_a}\leq\sigma$ for each $a\in S$. Also, for every branch $\beta\subset S$ we have
$$
\Biggl\|\sum_{a\in\beta}y_a\Biggr\|=\Biggl\|\sum_{a\in\beta}\sum_{k=1}^\infty\bigl\|x_k^{(a)}\bigr\|e_k\Biggr\|\geq\Biggl\|\sum_{k=1}^\infty\Biggl\|\sum_{a\in\beta}x_k^{(a)}\Biggr\|e_k\Biggr\|=\Biggl\|\sum_{a\in\beta}\boldsymbol{x}_a\Biggr\|>1.
$$
Moreover, since $X_k$ is finite-dimensional, we have $\lim_{n\to\infty}\n{x_k^{(a\smallfrown n)}}=0$ for every $k\in\N$ and $a\in S$. This means that $\w{N}(\sigma)<\infty$ and hence also $N(\sigma)<\infty$ for all $\sigma$'s which, in view of \cite[Theorem~4.10]{gkl}, is impossible as $\EE$ has summable Szlenk index.
\end{proof}
\begin{remark}
Note that the above result is, in a~sense, `tautological' in the case where $\EE=c_0$. Indeed, since every finite-dimensional space embeds $(1+\e)$-isometrically in $c_0$, for any $\e>0$ ({\it cf. }\cite[\S 11.1]{ak}), the space $(\bigoplus_{n=1}^\infty X_n)_{c_0}$ is `almost isometric' to a~subspace of $c_0$, so it has summable Szlenk index for that reason.
\end{remark}

\section{Summability and power type}
Recall that the Szlenk power type $p(X)$ is defined as the infimum over those $q\geq 1$ which admit an estimate $\Sz(X,\e)\leq C\e^{-q}$ for every $\e\in (0,1)$ and some $C>0$. In this section, we are interested in how the Szlenk power type behaves with respect to reasonable direct sums. In order to obtain some relevant result, it is not enough just to assume that the Szlenk power types of all summands are bounded, because the corresponding constants $C$, if too large, may make the Szlenk index of the direct sum even larger than $\omega$. 
\begin{example}
For each $N\in\N$ the space $C([0,\omega^N])$ is isomorphic to $c_0$, whence its Szlenk power type equals~$1$. However, $(\bigoplus_{N=1}^\infty C([0,\omega^N]))_{c_0}$ is isomorphic to $C([0,\omega^\omega])$ which is known to have Szlenk index $\omega^2$.
\end{example}

This drastic example motivates the following definition. For any Banach space $X$ with $\Sz(X)\leq\omega$ we set 
$$
C_p(X)=\inf\bigl\{c>0\colon \Sz(X,\e)\leq c\e^{-p}\mbox{ for every }\e\in (0,1)\bigr\}.
$$
We say that a sequence $(X_n)_{n=1}^\infty$ of Banach spaces with $\Sz(X_n)\leq\omega$ ($n\in\N$) is {\it power type bounded}, provided that the number
$$
\mathfrak{p}(X_n)_{n=1}^\infty\coloneqq\inf\Bigl\{p\in [1,\infty)\colon \sup_n\;\! C_{p}(X_n)<\infty\Bigr\}
$$
is finite. This is what we assume about the sequence $(X_n)_{n=1}^\infty$ in the direct sum $(\bigoplus_{n=1}^\infty X_n)_\EE$ considered. Next, we shall focus on the underlying space $\EE$.

Recall that a sequence $(E_n)_{n=1}^\infty$ of finite-dimensional spaces is called a~{\it finite-dimensional decomposition} (FDD) of a~Banach space $X$ if every $x\in X$ may be written uniquely as $x=\sum_{n=1}^\infty x_n$, where each $x_n\in X_n$. We use the standard notation  $P_n$ and $P_I$, with $n\in\N$ and $I\subset\N$ being an interval, for projections corresponding to a~given FDD. Following \cite{MT-J} we shall say that $X$ is $C$-{\it asymptotic} $\ell_p$ (with some $C\geq 1$ and $1\leq p\leq\infty$) {\it with respect to} its FDD $(E_n)_{n=1}^\infty$ if for every block sequence $(x_j)_{j=1}^n$ of $(E_j)_{j=n}^\infty$ we have
\begin{equation}\label{asymp_def}
\frac{1}{C}\Biggl(\sum_{j=1}^n\n{x_j}^p\Biggr)^{\!\!\! 1/p}\leq\Biggl\|\sum_{j=1}^nx_j\Biggr\|\leq C\Biggl(\sum_{j=1}^n\n{x_j}^p\Biggr)^{\!\!\!1/p}
\end{equation}
(if $p=\infty$, we take the $c_0$-norm $\max_{1\leq j\leq n}\n{x_j}$). We say that $X$ is {\it asymptotic} $\ell_p$ {\it with respect to} $(E_n)_{n=1}^\infty$ if it is $C$-asymptotic $\ell_p$ with respect to $(E_n)_{n=1}^\infty$ for some $C\geq 1$. 

A~more general approach and a~coordinate free way of stating the definition of asymptotic $\ell_p$ spaces was given in \cite{MMT-J} with the aid of certain infinite games. In view of results by Odell, Schlumprecht and Zs\'ak \cite{OSZ}, separable reflexive asymptotic $\ell_p$ spaces (understood in that general sense) share nice structural properties, {\it e.g.} they embed in reflexive spaces with asymptotic $\ell_p$ FDD's. For more information on this topic, the reader is referred to \cite{OSZ} and the references therein.

At this point, let us mention a~result due to Knaust, Odell and Schlumprecht which reveals the connection between $\ell_1$-asymptoticity and summability of Szlenk index.
\begin{theorem}[{\it cf. }{\cite[Prop.~6.7]{kos}}]\label{kos_theorem}
Let $(E_k)_{k=1}^\infty$ be an FDD for a~Banach space $X$. Then $X$ has summable $H$-index with respect to $(E_k)_{k=1}^\infty$ if and only if there exists a~blocking $(H_j)_{j=1}^\infty$ of $(E_k)_{k=1}^\infty$ which is skipped asymptotic $\ell_1$, that is, for some positive constant $c$ and every skipped block sequence $(x_j)_{j=1}^n$ of $(H_j)_{j=n}^\infty$ we have
\begin{equation*}
\Biggl\|\sum_{j=1}^nx_j\Biggr\|\geq c\sum_{j=1}^n\n{x_j}.
\end{equation*}
\end{theorem}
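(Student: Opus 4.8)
This statement is essentially \cite[Prop.~6.7]{kos}; I~would prove it by treating the two implications separately, the reverse one being soft and the forward one carrying the real content. In both directions the starting point is the FDD-relative form of the tree-map description of Szlenk-type derivations (the analogue of Lemma~\ref{gkl_lemma}): the $n$-fold iterated $H$-derivation of the unit ball being nonempty is, up to universal constants, equivalent to the existence of a~block tree-map $(x_a)_{a\in S}$ in $X$ all of whose successor sequences $(x_{a\smallfrown\nu})_\nu$ tend to $0$ coordinatewise with respect to $(E_k)_{k=1}^\infty$, with $\n{x_a}\geq\frac14\e_{\abs a}$ for $1\leq\abs a\leq n$ and $\n{\sum_{a\in\beta}x_a}\leq 1$ along every branch $\beta$; that such a~tree-map may be taken to consist of block vectors is standard once the FDD is fixed.

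\emph{Sufficiency.} Suppose $(H_j)_{j=1}^\infty$ is a~blocking of $(E_k)$ that is skipped asymptotic $\ell_1$ with constant $c$. Fix $\e_1,\dots,\e_n>0$ with the $n$-fold iterated $H$-derivation nonempty and take a~block tree-map $(x_a)_{a\in S}$ as above. The plan is to prune it to a~single partial branch $a_0\prec a_1\prec\cdots\prec a_n$ along which the $x_{a_k}$ form a~\emph{skipped} block sequence of $(H_j)_{j=n}^\infty$: since at any node the successors are block vectors tending to $0$ coordinatewise, their supports must run off to infinity, so at the $k$-th step one can choose a~successor supported past the blocks used so far, past one more block (to leave the gap) and past $H_n$. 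Along such a~branch the skipped asymptotic $\ell_1$ inequality gives $1\geq\bigl\|\sum_{k=1}^n x_{a_k}\bigr\|\geq c\sum_{k=1}^n\n{x_{a_k}}\geq\frac{c}{4}\sum_{k=1}^n\e_k$, so $\sum_{k=1}^n\e_k\leq 4/c$ and the $H$-index is summable.

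\emph{Necessity.} Here I~would argue by contraposition: assume \emph{no} blocking of $(E_k)$ is skipped asymptotic $\ell_1$, and produce, for arbitrarily large $m$, a~witness $\e_1=\dots=\e_m=\de$ with $\de$ independent of $m$. By the sufficiency half of the tree-map description it is enough to build a~coordinatewise-null block tree-map of height $m+1$ with $\n{x_a}\geq\de$ and $\n{\sum_{a\in\beta}x_a}\leq 1$ along every branch. One constructs such a~tree-map \emph{together with} a~suitable blocking in a~greedy manner: a~partial branch is kept to be a~skipped block sequence whose partial sum has norm $\leq k/m$ after $k$ steps, and to extend it one needs infinitely many skipped block vectors, supported arbitrarily far out, of norm $\geq\de=1/(2m)$, whose addition keeps the partial sum of norm $\leq (k+1)/m$; the failure of the lower $\ell_1$ estimate --- in the strong ``all tails, all constants'' form guaranteed by ``no blocking works'' --- is exactly what makes each such extension possible.

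The crux is this last construction. One must upgrade ``no blocking of $(E_k)$ is skipped asymptotic $\ell_1$'' to a~\emph{single} blocking whose failure of the lower $\ell_1$ estimate is uniform enough --- over the tails of the FDD, over the precision demanded, and over the length of the block sequences involved --- to drive the tree construction, and then normalize the offending block sequences so that, after discarding their small-norm terms, enough terms of norm $\geq\de$ remain. That amalgamation is a~genuine combinatorial argument of the type carried out in \cite{kos}; the rest is bookkeeping with the tree-map characterization and the projection constants of the FDD.
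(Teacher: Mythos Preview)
The paper does not give its own proof of this statement: Theorem~\ref{kos_theorem} is stated with the attribution ``\textit{cf.}~\cite[Prop.~6.7]{kos}'' and is quoted without proof, serving only as background for the discussion of asymptotic $\ell_p$ structure. There is therefore nothing in the paper to compare your argument against.

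That said, your sketch is a faithful outline of how the result is proved in \cite{kos}. The sufficiency direction is exactly as you describe: prune a coordinatewise-null block tree-map to a branch that forms a skipped block sequence of $(H_j)_{j=n}^\infty$ and apply the lower $\ell_1$ estimate. For the necessity direction you correctly identify both the strategy (contraposition, building tree-maps of arbitrary height with fixed $\delta$) and the genuine difficulty (amalgamating the failure of the skipped asymptotic $\ell_1$ condition across all blockings into a single uniform failure that can feed the tree construction), and you are right that this amalgamation is the substantive combinatorial step carried out in \cite{kos}. Your write-up is a sketch rather than a proof---in particular the necessity half defers the real work to \cite{kos}---but as a summary of the argument it is accurate.
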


For a precise definition of $H$-index and its summability, see \cite[\S 2~and \S 6]{kos}. Let us note that in the case where the FDD in question is given by an~unconditional basis, then arguing in a~similar way as in the proof of Theorem~\ref{E-sum} we infer that summability of the Szlenk index for $X$ is the same as summability of the $H$-index for $X^\ast$, and this is in turn equivalent to saying that $p(X)=1$ in the strong sense---namely, that the infimum in \eqref{p_def} is attained ({\it cf. }\cite[Prop.~6.9]{kos}). Hence, in our circumstances, it is most natural to require that $\EE^\ast$ is asymptotic $\ell_p$ with some $p$.

\begin{lemma}\label{type>=p}
If $\EE$ is a Banach space with a~normalized, shrinking, unconditional basis $(e_n)_{n=1}^\infty$ such that for some $p\in [1,\infty)$ its dual $\EE^\ast$ is asymptotic $\ell_p$ with respect to $(e_n^\ast)_{n=1}^\infty$, then $p(\EE)=p$.
\end{lemma}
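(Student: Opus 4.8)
The statement asserts an exact equality $p(\EE)=p$, so the plan is to establish the two inequalities $p(\EE)\leq p$ and $p(\EE)\geq p$ separately, using the asymptotic $\ell_p$ structure of $\EE^\ast$ to control Szlenk derivations of $B_{\EE^\ast}$.

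For the upper bound $p(\EE)\leq p$, I would show that $\Sz(\EE,\e)\lesssim \e^{-p}$ up to a multiplicative constant and a loss in the exponent, or—better—directly produce a bound of the form $\Sz(\EE,\e)\leq C\e^{-p}$. The key point is that an $\e$-Szlenk derivation of a weak$^\ast$-compact set forces the appearance of a weak$^\ast$-null block perturbation of size $\ge\e/2$; iterating the derivation $n$ times produces (via Lemma~\ref{gkl_lemma} or a direct net argument) a weak$^\ast$-null tree-map $(x_a^\ast)_{a\in S}$ in $\EE^\ast$ with $\n{x_a^\ast}\gtrsim\e$ along $n$ levels and $\n{\sum_{a\in\beta}x_a^\ast}\leq 1$ on every branch. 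By a standard perturbation/gliding-hump argument using that $(e_n^\ast)$ is a basis, one passes to a full subtree along which the relevant functionals are (almost) successively supported blocks of $(e_n^\ast)_{n=1}^\infty$; then the asymptotic $\ell_p$ estimate \eqref{asymp_def} applied to a partial branch of length $n$ gives $1\gtrsim\bigl\|\sum_{a\in\beta}x_a^\ast\bigr\|\gtrsim \tfrac{1}{C}\,n^{1/p}\e$, hence $n\lesssim (C/\e)^p$. This yields $\Sz(\EE,\e)\leq C'\e^{-p}$ and therefore $p(\EE)\leq p$.

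For the lower bound $p(\EE)\geq p$, I would construct, for each small $\e>0$, an explicit weak$^\ast$-null tree-map in $\EE^\ast$ witnessing that $\iota_\e^{\,n}B_{\EE^\ast}\neq\varnothing$ for $n$ of order $\e^{-p}$, and then invoke the sufficiency direction of Lemma~\ref{gkl_lemma}. Concretely: fix $n$ and a weak$^\ast$-null normalization; along each node $a$ with $\abs{a}=j$ let the successors be $x_{a\smallfrown m}^\ast = \e\, e^\ast_{k(a,m)}$ where $k(a,m)\to\infty$ as $m\to\infty$ and the supports are arranged so that for every branch $\beta$ the vector $\sum_{a\in\beta}x_a^\ast$ is a successive block of $(e_n^\ast)$ consisting of $n$ terms each of norm $\e$. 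The asymptotic $\ell_p$ upper estimate gives $\bigl\|\sum_{a\in\beta}x_a^\ast\bigr\|\leq C\,n^{1/p}\e$, which is $\leq 1$ provided $n\leq (C\e)^{-p}$. Since the $x_{a\smallfrown m}^\ast$ tend to $0$ weak$^\ast$ (they are fixed-length blocks marching to infinity), Lemma~\ref{gkl_lemma} yields $\Sz(\EE,\e)\geq n\gtrsim \e^{-p}$, so by \eqref{p_def} we get $p(\EE)\geq p$. (One should be slightly careful that $\EE^\ast$ being asymptotic $\ell_p$ with respect to the \emph{tail} blocks is exactly what is needed here, since the blocks live far out; this is built into \eqref{asymp_def}.)

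The main obstacle I anticipate is the passage, in the upper-bound argument, from an abstract weak$^\ast$-null tree-map to one whose branch-sums are genuine successive blocks of $(e_n^\ast)$ so that \eqref{asymp_def} applies. A single weak$^\ast$-null sequence can be perturbed into a block sequence by a gliding-hump argument since $(e_n^\ast)$ is a basis (here shrinkingness of $(e_n)$, equivalently $\EE^\ast=\FF$, is what guarantees $(e_n^\ast)$ is actually a basis of $\EE^\ast$), but doing this \emph{simultaneously and inductively over all levels of the tree}, while keeping the norm lower bounds $\n{x_a^\ast}\gtrsim\e$ and the branch-sum upper bound $\leq 1+o(1)$, requires a careful pruning to a full subtree with a summable sequence of error terms; this is routine in spirit but is the technical heart of the lemma. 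The rest—extracting the tree-map from non-emptiness of the iterated derivation, and converting the $n^{1/p}$ estimates into statements about $p(\EE)$ via \eqref{p_def}—is straightforward.
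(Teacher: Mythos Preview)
Your proposal is correct, and the lower bound half---constructing a weak$^\ast$-null tree-map of the form $x_{a\smallfrown m}^\ast=\e\,e_{k(a,m)}^\ast$ with successive supports along each branch and applying the upper asymptotic $\ell_p$ estimate together with Lemma~\ref{gkl_lemma}---is exactly what the paper does (with the explicit choice $k(a,m)=mN+\abs{a}$).

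For the upper bound $p(\EE)\leq p$, however, you take a genuinely different route from the paper. The paper does \emph{not} give a self-contained argument here: it simply observes that $p(\EE)\leq p$ is a special case of the inequality $p\bigl((\bigoplus X_n)_\EE\bigr)\leq\max\{p,\mathfrak p(X_n)\}$ established later in Theorem~\ref{power} (take each $X_n$ one-dimensional), and defers to that. Your approach---extract a tree-map via Lemma~\ref{gkl_lemma}, prune to a full subtree on which branch elements are almost successive blocks of $(e_n^\ast)$ supported past coordinate $n$, then apply the \emph{lower} asymptotic $\ell_p$ estimate to get $1\gtrsim C^{-1}n^{1/p}\e$---is more elementary and entirely self-contained; it avoids the renorming machinery of Proposition~\ref{p-renorming} and the $\star_{\ssq}$-calculus that drive Theorem~\ref{power}. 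The paper's route, on the other hand, buys generality: once Theorem~\ref{power} is proved, Lemma~\ref{type>=p} drops out for free, so there is no point in duplicating the work. Your only technical obligation, which you correctly identify, is the inductive gliding-hump pruning that simultaneously pushes the first block past coordinate $n$ (so that \eqref{asymp_def} applies) and keeps the branch-sum bound within $1+o(1)$; this is routine since $(e_n^\ast)$ is a genuine basis of $\EE^\ast$ by shrinkingness.
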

\begin{proof}
Let $C\geq 1$ be so that $\EE^\ast$ is $C$-asymptotic $\ell_p$, let $\e\in (0,\frac{1}{C})$ and $N=\lfloor (C\e)^{-p}\rfloor$. Consider the~tree-map $(x_a^\ast)_{a\in S}$ in $\EE^\ast$ of height $N+1$ given by
$$
x_{a\smallfrown k}^\ast=\e e_{kN+\abs{a}}^\ast\quad\mbox{for each }a\in S\mbox{ with }\abs{a}<N\mbox{ and }k>\max a.
$$
Plainly, it is a~weak$^\ast$-null tree-map satisfying $\n{x_a^\ast}=\e$ for every $a\in S$ with $1\leq\abs{a}\leq N$. Moreover, if $\beta\subset S$ is a~branch, then by using the upper estimate in the definition of $\ell_p$-asymptoticity we obtain $\n{\sum_{a\in\beta}x_a^\ast}\leq \e\,CN^{1/p}\leq 1$. Thus, by Lemma~\ref{gkl_lemma} we infer that $\Sz(\EE,\e)>\lfloor (C\e)^{-p}\rfloor$. This shows that the Szlenk power type of $\EE$, if exists (that is, if $\Sz(\EE)\leq\omega$), must be at least equal to $p$. The reverse inequality will follow from Theorem~\ref{power} below.
\end{proof}

Now, our goal is to derive a~log-type estimate in asymptotic $\ell_p$ spaces, which will be crucial for the proof of Theorem~\ref{power}. Proposition~\ref{asymp} below already appeared in the literature in various forms ({\it e.g.} for Tsirelson's space it corresponds to the results in \cite[Ch.~4]{CS}). The key part of the proof is a~disjointization lemma (see Step~2 below)---first showed in \cite{PR} for $L^p$-spaces, then simplified by Kwapie\'n, and finally extended to general Banach lattices by W.B.~Johnson ({\it cf. }\cite[Prop.~3]{LT}). For us, positivity of the resulting operator is of vital importance, so we reproduce the argument carefully.

We consider the well-known {\it fast growing hierarchy}, that is, the sequence of functions $g_k\colon\N\to\N$ given by
$g_0(n)=n+1$ and $g_{k+1}(n)=g_k^{(n)}(n)$ (the $n$-fold iteration) for $k\geq 0$. Notice that $$
g_1(n)=2n,\,\,\, g_2(n)=n\cdot 2^n\,\,\,\mbox{and}\,\,\, g_3(n)\geq 2^{2^{\iddots^{{}_{2^n}}}}\,\,\mbox{(}n\mbox{ occurrences of }2\mbox{)}.
$$

\begin{proposition}\label{asymp}
Let $X$ be a Banach space with a $1$-unconditional basis $(e_n)_{n=1}^\infty$ with respect to which it is $C$-asymptotic $\ell_p$, for some $1\leq p\leq \infty$ and $C\geq 1$. Then every $n$-dimensional subspace of $X$ spanned by vectors supported after $[1,n]$ is $3C^8$-isomorphic (in the Banach--Mazur sense) via a~positive operator to a~subspace of $\ell_p^N$ with $N\leq (4n^2)^n$.
\end{proposition}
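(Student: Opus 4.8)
The statement asks us to show that every $n$-dimensional subspace $V$ of $X$ spanned by vectors supported after $[1,n]$ is $3C^8$-isomorphic, via a \emph{positive} operator, to a subspace of $\ell_p^N$ with $N \leq (4n^2)^n$. The strategy is to first reduce the asymptotic $\ell_p$ estimate to an exact $\ell_p$ estimate on finitely many blocks, and then run the classical disjointization (Pietsch--Rosenthal--Kwapień--Johnson) argument while carefully tracking that the resulting operator can be taken positive. I would organize this into three steps.

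\medskip
\noindent\textbf{Step 1 (blocking).} Let $V = \mathrm{span}\{v_1,\ldots,v_n\}$ with each $v_i$ supported in $(n,\infty)$. Since $V$ is finite-dimensional, I first want to pass from the coordinate-wise action of the basis to an action on finitely many \emph{blocks}. I would partition a large initial segment of $\N$ containing $\mathrm{supp}(v_1) \cup \cdots \cup \mathrm{supp}(v_n)$ into $n$ consecutive intervals $I_1 < I_2 < \cdots < I_n$ (all lying after $[1,n]$), and for a generic $v = \sum a_i v_i \in V$ consider the pieces $P_{I_j}v$. For any block sequence drawn from $(P_{I_j}v : j)$ with at most $n$ terms, \eqref{asymp_def} gives the two-sided $C$-estimate $\frac{1}{C}(\sum_j \|P_{I_j}v\|^p)^{1/p} \leq \|v\| \leq C(\sum_j \|P_{I_j}v\|^p)^{1/p}$ (since the blocks start after $[1,n]$ and there are at most $n$ of them). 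Thus the map $v \mapsto (\|P_{I_1}v\|,\ldots,\|P_{I_n}v\|) \in \ell_p^n$ is already $C^2$-equivalent to the norm on $V$, but it is \emph{not linear}. The point of the remaining steps is to linearize it, paying only a bounded further factor and keeping positivity.

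\medskip
\noindent\textbf{Step 2 (disjointization lemma).} This is the heart of the matter and the step I expect to be the main obstacle. Working inside each block subspace $X_{I_j} := \mathrm{span}\{e_k : k \in I_j\}$ — which is a finite-dimensional Banach lattice under the unconditional basis — I want to replace the finitely many vectors $\{P_{I_j}v_i : i = 1,\ldots,n\}$ by finitely many \emph{disjointly supported} vectors in a lattice of the form $\ell_p^{N_j}$, via a positive norm-controlled map. The classical lemma (Johnson's extension of Pietsch--Rosenthal, {\it cf.} \cite[Prop.~3]{LT}) says: given $m$ vectors $x_1,\ldots,x_m$ in a Banach lattice $Y$ and a partition of the ``relevant region'' according to which $p$-th power of which $|x_i|$ dominates, one obtains a positive operator into an $\ell_p$-sum of disjoint bands whose norm and inverse norm (on $\mathrm{span}\{x_i\}$) are controlled by a universal constant times the $\ell_p$-concavity/convexity-type constants available — here these come from the $C$-asymptotic $\ell_p$ hypothesis restricted to each block. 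I would reproduce this construction explicitly: take the finitely many sets $\{k \in I_j : |e_k^\ast(x_i)|^p \text{ is maximal among } i' \text{ in a chosen ordering}\}$, obtaining a disjoint decomposition of $I_j$ into at most $m$ pieces, average appropriately, and check positivity is preserved throughout (this is why one reproduces the argument rather than citing it — the cited versions don't emphasize positivity). Applying this with $m = n$ in each of the $n$ blocks, and tracking the exponents carefully, yields a positive map onto a subspace of $\ell_p^N$; the combinatorics of "at most $n$ pieces per block, $n$ blocks, and one further level of refinement to handle the cross-block interaction" is what produces the bound $N \leq (4n^2)^n$, and the product of the constant from Step 1, the universal constant in the disjointization, and a factor from re-normalizing the pieces is what accumulates to $3C^8$.

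\medskip
\noindent\textbf{Step 3 (assembling the estimate).} Finally I would compose: the linearized positive map $T : V \to \ell_p^N$ factors (up to the disjointization error) through $v \mapsto (P_{I_j}v)_j \mapsto (\text{disjoint representatives})$, and on the image the norm is, by construction of the bands, exactly an $\ell_p$-norm. Combining the $C^2$ from Step 1 with the bounded multiplicative errors from Step 2 gives $\|v\| / (3C^8) \leq \|Tv\|_{\ell_p^N} \leq 3C^8 \|v\|$ after adjusting $T$ by a scalar; positivity of $T$ is immediate since each ingredient (restriction $P_{I_j}$, coordinate-norm extraction via the lattice structure, the averaging in the disjointization) is a positive operation. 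The constant $3C^8$ is then a matter of bookkeeping the eight factors of $C$ (two from each of the two-sided estimates used twice over, plus slack) and absorbing universal constants into the leading $3$. I do not expect difficulty in Steps 1 and 3; the genuine work, and the reason the paper says "we reproduce the argument carefully," is making Step 2's disjointization output a \emph{positive} isomorphism with the stated dimension bound.
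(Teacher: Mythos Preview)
Your proposal has a genuine gap: you have not identified the mechanism that converts \emph{disjoint vectors in $X$} into an \emph{$\ell_p$-equivalent system}, and your single use of the asymptotic $\ell_p$ estimate in Step~1 is far too weak to do this. The paper's proof is organized quite differently. Its Step~1 is an inductive ``fast growing hierarchy'' argument showing that any $g_k(n)$ normalized block vectors supported after $[1,n]$ are $C^{k+1}$-equivalent to the unit vector basis of $\ell_p^{g_k(n)}$; in particular, up to $g_3(n)\geq (4n^2)^n$ such blocks are $C^4$-equivalent to $\ell_p$. Its Step~2 is a \emph{single global} disjointization (via an Auerbach basis and Kakutani's $M$-space representation, not the ``which $|x_i|^p$ dominates'' partition you describe) producing a positive linear operator $V$ with $\|Vx-x\|\leq\tfrac12\|x\|$ onto the span of at most $(4n^2)^n$ pairwise disjoint vectors in $X$. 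Step~3 then applies Step~1 with $k=3$ to send those disjoint vectors to $\ell_p^N$, and the constant $3C^8 = \tfrac32\cdot 2\cdot C^4\cdot C^4$ records exactly this: the $\tfrac32\cdot 2$ from the $\e=\tfrac12$ disjointization and the $C^4$ (in each direction) from the iterated asymptotic estimate.

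By contrast, your Step~1 only controls $n$ blocks with a factor $C$; after any disjointization you are left with many more than $n$ disjoint vectors \emph{still sitting in $X$}, and you give no argument that these are $\ell_p$-equivalent --- the asymptotic hypothesis restricted to a single interval $I_j$ says nothing about the internal lattice structure of $X_{I_j}$. Your ``dominance'' partition is not the construction in \cite{LT}: it does not yield a positive \emph{linear} operator close to the identity, and your combinatorics (``$n$ pieces per block, $n$ blocks'') would give $n^2$ pieces, not $(4n^2)^n$; the phrase ``one further level of refinement'' does not explain the exponent $n$, which in the paper comes from partitioning the cube $[-1,1]^n$ into $\lceil 4n^2\rceil^n$ boxes. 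In short, you are missing the iterated block estimate (the paper's Step~1) entirely, and without it there is no bridge from ``disjoint in $X$'' to ``$\ell_p$''.
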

\begin{proof}
We shall deal with the case $p<\infty$; the proof works perfectly well also for $p=\infty$. We split the reasoning into three steps.

\vspace*{2mm}\noindent
{\sc Step 1. }We {\it claim} that for every $k\geq 0$ any $g_k(n)$ normalized block vectors in $X$, supported after $[1,n]$ are $C^{k+1}$-equivalent to the unit vector basis of $\ell_p^{g_k(n)}$.

We proceed by induction on $k$. If $k=0$, we have $n+1$ block vectors supported after $[1,n]$, hence the assertion follows by definition.

Now, suppose the assertion is valid for some $k\geq 0$ and let $(v_i\colon 1\leq i\leq g_{k+1}(n))$ be a~normalized block basic sequence of $(e_m)_{m=1}^\infty$ with $$n<\supp(v_1)<\ldots<\supp(v_{g_{k+1}(n)}).$$Let
$$
E_j=\bigl\{g_k^{(j-1)}(n)+1,g_k^{(j-1)}(n)+2,\ldots ,g_k^{(j)}(n)\bigr\}\quad\mbox{for }1\leq j\leq n,
$$
where we adopt the convention that $g_k^{(0)}(n)=0$. Thus, $E_j$'s partition the set of indices $\{1,\ldots,g_{k+1}(n)\}$ into $n$ consecutive pieces. Since $X$ is $C$-asymptotic $\ell_p$, for any sequence of scalars $(a_i\colon 1\leq i\leq g_{k+1}(n))$ we have 
\begin{equation}\label{avi}
\Biggl\|\sum_{i=1}^{g_{k+1}(n)}a_iv_i\Biggr\|\geq\frac{1}{C}\Biggl(\sum_{j=1}^n\Biggl\|\sum_{i\in E_j}a_iv_i\Biggr\|^p\Biggr)^{\! 1/p}.
\end{equation}
Fix, for a moment, any $j\in\{1,\ldots,n\}$ and observe that the elements in $(v_i\colon i\in E_j)$ satisfy
$$
g_k^{(j-1)}(n)<\supp(v_{\min E_j})<\ldots<\supp(v_{\max E_j}).
$$
Moreover, $\abs{E_j}\leq g_k^{(j)}(n)=g_k(g_k^{(j-1)}(n))$, whence the inductive hypothesis gives 
$$
\Biggl\|\sum_{i\in E_j}a_iv_i\Biggr\|\geq \frac{1}{C^{k+1}}\Biggl(\sum_{i\in E_j}\abs{a_i}^p\Biggr)^{\!\! 1/p}\quad\mbox{for each }1\leq j\leq n.
$$
Therefore, by \eqref{avi} we obtain
$$
\Biggl\|\sum_{i=1}^{g_{k+1}(n)}a_iv_i\Biggr\|\geq\frac{1}{C}\Biggl(\frac{1}{C^{p(k+1)}}\sum_{j=1}^n\sum_{i\in E_j}\abs{a_i}^p\Biggr)^{\!\! 1/p}=\frac{1}{C^{k+2}}\Biggl(\sum_{i=1}^{g_{k+1}(n)}\abs{a_i}^p\Biggr)^{\!\! 1/p}
$$
which is the desired lower estimate. The upper estimate is derived similarly.

\vspace*{2mm}\noindent
{\sc Step 2. }For any $n\in\N$ and $\e>0$ we define $N(n,\e)=\lceil 2n^2/\e\rceil^n$. Let $L$ be a Banach lattice and $F\subset L$ be an $n$-dimensional subspace. For every $\e>0$ there exist pairwise disjoint elements $\{g_i\}_{i=1}^{N(n,\e)}$ of $L$ and a~positive linear operator $V\colon F\to G=\mathrm{span}\{g_i\}_{i=1}^{N(n,\e)}$ so that $\n{Vx-x}\leq\e\n{x}$ for each $x\in F$.

\vspace*{2mm}
We give an outline of the argument given in \cite[Prop.~3]{LT}. First, take an Auerbach basis $\{f_i\}_{i=1}^n$ of $F$, so that for any sequence of scalars $(a_i)_{i=1}^n$ we have $\n{\sum_{i=1}^na_if_i}\geq\max_{1\leq i\leq n}\abs{a_i}$. Set $f_0=n^{-1}\sum_{i=1}^n\abs{f_i}$, where $\abs{f}=f\vee (-f)$, and define a~Banach lattice $(Z,\vertiii{\cdot})$ by
$$
Z=\bigl\{f\in L\colon\abs{f}<tf_0\mbox{ for some }t>0\bigr\},\quad \vertiii{f}=\inf\bigl\{t>0\colon\abs{f}<tf_0\bigr\}.
$$
Then $Z$ is an abstract $M$-space with the strong unit $f_0$, whence by Kakutani's theorem ({\it cf. }\cite[Thm.~1.b.6]{lt}) it is order isometric to a~sublattice of a~certain $L^\infty(\Omega)$.

Set $d=\lceil 2n^2/\e\rceil$ and pick pairwise disjoint intervals $I_1,\ldots,I_d\subseteq [-1,1]$ covering the whole of $[-1,1]$, each of length at most $\e/n^2$. For any $\gamma=(i_1,\ldots,i_n)\in [d]^n$ define
$$
G_\gamma=\bigl\{\omega\in\Omega\colon f_j(\omega)\in I_{i_j}\mbox{ for each }1\leq j\leq n\bigr\}.
$$
Then $\{G_\gamma\colon \gamma\in [d]^n\}$ is a~collection of at most $d^n=N(n,\e)$ non-empty mutually disjoint measurable sets, hence the sublattice of all linear combinations of the characteristic functions of all $G_\gamma$'s is order isometric to $\ell_\infty^m$ with $m\leq d^n$.

Now, we approximate each $f_i$ by a~linear combination of $\ind_{G_\gamma}$'s to within $\e/n^2$. To this end, for each $\gamma\in [d]^n$ pick any point $\omega_\gamma\in G_\gamma$ (we ignore those $\gamma$'s for which $G_\gamma=\varnothing$) and set $t_{\gamma,i}=f_i(\omega_\gamma)$. Then, plainly,
\begin{equation}\label{V1}
\vertiiib{f_i-\sum_{\gamma\in [d]^n}t_{\gamma,i}\ind_{G_\gamma}}\leq\frac{\e}{n^2}\quad\mbox{for each }1\leq i\leq n.
\end{equation}
For every $x\in F$, $x=\sum_{i=1}^na_if_i$ define
\begin{equation}\label{V2}
Vx=\sum_{i=1}^na_i\!\sum_{\gamma\in [d]^n}t_{\gamma,i}\ind_{G_\gamma}
\end{equation}
and note that if $x\geq 0$, then for every $\omega\in\Omega$ by picking the unique $\gamma$ with $\omega\in G_\gamma$ we obtain
$$
(Vx)(\omega)=\sum_{i=1}^na_it_{\gamma,i}=\sum_{i=1}^na_if_i(\omega_\gamma)=x(\omega_\gamma)\geq 0.
$$
This shows that $V$ is a~positive linear operator. Moreover, if $\vertiii{x}\leq n$, then \eqref{V1} and \eqref{V2} give $\vertiii{Vx-x}\leq\e$ which means that $\abs{Vx-x}\leq\e f_0$. Therefore, $\n{Vx-x}\leq\e$ for every $x\in F$ with $\n{x}\leq 1$.

\vspace*{2mm}\noindent
{\sc Step 3. }Fix any $n$-dimensional subspace $F$ of $X$ spanned by vectors supported after $[1,n]$. Since $X$ is equipped with the $1$-unconditional basis, it has the natural structure of Banach lattice, so we can apply the assertion of Step~2 to $L=X$ and $\e=\frac{1}{2}$. Observe also that $N(n,\frac{1}{2})=(4n^2)^n\leq g_3(n)$, whence by applying the claim proved in Step~1 (with $k=3$) we obtain an into-isomorphism $\Phi=W\circ V\colon F\to \ell_p^N$, where $N\leq (4n^2)^n$ and $W$ is given by $Wg_i=e_i$ for $1\leq i\leq (4n^2)^n$. Since $V$ was positive, so is $\Phi$. Finally,
$$
\n{\Phi}\n{\Phi^{-1}}\leq\n{V}\n{V^{-1}}\n{W}\n{W^{-1}}\leq \frac{3}{2}\cdot 2\cdot C^4\cdot C^4=3C^8,
$$
which completes the proof.
\end{proof}

We are prepared to prove the aforementioned log-type estimate, but first let us introduce a~bit of notation. If $(e_n)_{n=1}^\infty$ is a~$1$-unconditional basis of $X$, then given any $x,y\in X$ we set
$$
x\star_{\ssp}y=\sum_{n=1}^\infty \Bigl(\abs{e_n^\ast(x)}^p+\abs{e_n^\ast(y)}^p\Bigr)^{\! 1/p}\! e_n\in X,
$$
which defines an associative operation in $X$. Note that the so-defined element coincides with 
$(\abs{x}^p+\abs{y}^p)^{1/p}$ given by the Yudin--Krivine functional calculus ({\it cf. }\cite[\S 1.d]{lt}); we shall use their notation henceforth. (As above, $x\in X$ is called positive if $e_n^\ast(x)\geq 0$ for each $n\in\N$.) We will need the following lemma ({\it cf. }\cite[Prop.~1.d.9]{lt}).

\begin{lemma}[{\bf Krivine's inequalities}]\label{krivine}
Let $X$ and $Y$ be Banach lattices and let $\Phi\colon X\to Y$ be a~positive operator. Then for every sequence $(x_i)_{i=1}^n\subset X$ we have:
$$
\Biggl\|\Biggl(\sum_{i=1}^n\abs{\Phi (x_i)}^p\Biggr)^{\!\!\!1/p}\Biggr\|\leq\n{\Phi}\Biggl\|\Biggl(\sum_{i=1}^n\abs{x_i}^p\Biggr)^{\!\!\!1/p}\Biggr\|\quad\mbox{if }\,1\leq p<\infty
$$
and
$$
\Biggl\|\,\bigvee_{i=1}^n\abs{\Phi(x_i)}\,\Biggr\|\leq\n{\Phi}\Biggl\|\,\bigvee_{i=1}^n\abs{x_i}\,\Biggr\|\quad\mbox{if }\,p=\infty.
$$
\end{lemma}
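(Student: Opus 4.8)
The plan is to deduce Krivine's inequalities for positive operators between Banach lattices from the standard Yudin--Krivine functional calculus, following the template of \cite[Prop.~1.d.9]{lt} but being a bit more careful than usual because here $\Phi$ is merely positive (not a lattice homomorphism). The key point is that a positive operator, while it need not commute with the functional calculus, does satisfy the one-sided inequality that is exactly what we need.

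First I would recall the description of $\bigl(\sum_{i=1}^n\abs{x_i}^p\bigr)^{1/p}$ via Krivine's calculus: for fixed $n$ and $p$, the map $(\xi_1,\ldots,\xi_n)\mapsto(\sum\abs{\xi_i}^p)^{1/p}$ is a positively homogeneous continuous function on $\R^n$ which can be uniformly approximated on the unit sphere by functions of the form $\max_{1\leq k\leq m}\sum_{i=1}^n a_{k,i}\xi_i$ (a pointwise supremum of linear functionals), and the functional calculus is defined so that it is a lattice homomorphism in this variable; in particular $\bigl(\sum_i\abs{x_i}^p\bigr)^{1/p}=\sup$ over an approximating family of elements of the form $\sum_i a_{k,i}x_i$, up to an error controllable in norm. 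The crucial algebraic facts I would extract are: (a) for a positive operator $\Phi$ and any $z_1,z_2\in X$, $\Phi(z_1\vee z_2)\geq\Phi(z_1)\vee\Phi(z_2)$, hence $\Phi\bigl(\bigvee_k w_k\bigr)\geq\bigvee_k\Phi(w_k)$; and (b) $\abs{\Phi(x)}\leq\Phi(\abs{x})$. Fact (b) is immediate from $-\abs{x}\leq x\leq\abs{x}$ and positivity; fact (a) follows since $z_1\vee z_2\geq z_i$ forces $\Phi(z_1\vee z_2)\geq\Phi(z_i)$.

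The main step is then the chain of estimates. Write $u=\bigl(\sum_i\abs{x_i}^p\bigr)^{1/p}$. Since $u\geq\abs{x_i}$ for each $i$ in the lattice sense (indeed $u^p\geq\abs{x_i}^p$ pointwise in the calculus), positivity of $\Phi$ gives $\Phi(u)\geq\Phi(\abs{x_i})\geq\abs{\Phi(x_i)}$ using (b), for every $i$; but $\Phi(u)$ itself need not dominate $\bigl(\sum_i\abs{\Phi(x_i)}^p\bigr)^{1/p}$ directly, so instead I would approximate $u$ from below: fix $\delta>0$ and choose scalars $a_{k,i}$ with $v\coloneqq\bigvee_{k=1}^m\sum_{i=1}^n a_{k,i}x_i$ satisfying $\abs{u-v}\leq\delta\n{u}'\cdot(\text{unit})$ — more precisely, arrange in the Krivine calculus that $\n{u-v}\leq\delta\n{u}$ and, on the other side of the approximation, that the corresponding function bounds $(\sum\abs{\xi_i}^p)^{1/p}$ from below up to the factor $1+\delta$, so that $\bigl(\sum_i\abs{\Phi(x_i)}^p\bigr)^{1/p}\leq(1+\delta)\bigvee_k\abs{\sum_i a_{k,i}\Phi(x_i)}=(1+\delta)\bigvee_k\abs{\Phi(\sum_i a_{k,i}x_i)}\leq(1+\delta)\bigvee_k\Phi\bigl(\abs{\sum_i a_{k,i}x_i}\bigr)\leq(1+\delta)\Phi\bigl(\bigvee_k\abs{\sum_i a_{k,i}x_i}\bigr)$ using (a) and (b). The last lattice supremum is $\leq\abs{v}+(\text{something})$, and $\n{\Phi(v)}\leq\n{\Phi}\n{v}\leq\n{\Phi}(1+\delta)\n{u}$, so taking norms and letting $\delta\to0$ yields $\bigl\|\bigl(\sum_i\abs{\Phi(x_i)}^p\bigr)^{1/p}\bigr\|\leq\n{\Phi}\bigl\|\bigl(\sum_i\abs{x_i}^p\bigr)^{1/p}\bigr\|$. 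The $p=\infty$ case is the special (and easier) instance where $u=\bigvee_i\abs{x_i}$ and one uses (a) and (b) once, with no approximation needed.

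The step I expect to be the main obstacle is getting the approximation in the Krivine calculus to respect \emph{both} sides simultaneously after applying $\Phi$: one needs the approximating lattice-polynomial $v$ to be close to $u$ in norm (so that $\n{\Phi(v)}$ is controlled) \emph{and} to dominate, after replacing $x_i$ by $\Phi(x_i)$, the quantity $\bigl(\sum_i\abs{\Phi(x_i)}^p\bigr)^{1/p}$ up to a factor tending to $1$ — and $\Phi$ may badly distort lattice operations, so the second requirement cannot be read off from the first. The resolution is to choose the \emph{lower} approximation $(\sum\abs{\xi_i}^p)^{1/p}\geq(1+\delta)^{-1}\max_k\sum_i a_{k,i}\xi_i$ with all $a_{k,i}\geq0$ and $\max_k\sum_i a_{k,i}\xi_i\leq(\sum\abs{\xi_i}^p)^{1/p}$ on the positive cone, which is possible since $(\sum\abs{\xi_i}^p)^{1/p}$ is the supremum of its supporting linear functionals at points of the sphere; positivity of the $a_{k,i}$ together with $\abs{\Phi(x_i)}\leq\Phi(\abs{x_i})$ then makes the key inequality $\abs{\sum_i a_{k,i}\Phi(x_i)}\leq\Phi\bigl(\sum_i a_{k,i}\abs{x_i}\bigr)$ go through cleanly. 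Apart from this, the proof is a routine assembly of the functional-calculus formalism in \cite[\S 1.d]{lt} and I would simply cite that reference for the approximation properties of the calculus rather than reproving them.
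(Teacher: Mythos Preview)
The paper does not prove this lemma at all; it simply states it with a citation to \cite[Prop.~1.d.9]{lt}. Your sketch is essentially the standard argument found there and is correct in outline, so you have already done more than the paper.

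That said, your presentation of the ``main obstacle'' is more tangled than necessary. You do \emph{not} need the approximating lattice polynomial $v=\bigvee_k\sum_i a_{k,i}x_i$ to be close to $u=(\sum_i\abs{x_i}^p)^{1/p}$ in norm, nor do you need the coefficients $a_{k,i}$ to be nonnegative. The clean argument runs as follows. For every $a\in\R^n$ with $\n{a}_{p'}\leq 1$ one has $\sum_i a_i x_i\leq u$ in the lattice (this is H\"older in the calculus), hence by positivity of $\Phi$ and your fact~(a),
\[
\bigvee_{k}\sum_i a_{k,i}\Phi(x_i)=\bigvee_{k}\Phi\Bigl(\sum_i a_{k,i}x_i\Bigr)\leq\Phi\Bigl(\bigvee_{k}\sum_i a_{k,i}x_i\Bigr)\leq\Phi(u)
\]
for any finite family $\{a^{(k)}\}$ in $B_{\ell_{p'}^n}$. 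Choosing the family (by compactness of $B_{\ell_{p'}^n}$) so that $\max_k\sum_i a_{k,i}\xi_i\geq(1-\e)(\sum_i\abs{\xi_i}^p)^{1/p}$ pointwise on $\R^n$, the same inequality transfers to $Y$ via the calculus, giving $(1-\e)\bigl(\sum_i\abs{\Phi(x_i)}^p\bigr)^{1/p}\leq\Phi(u)$. Letting $\e\to 0$ and taking norms finishes the proof. The detour through positive coefficients and $\abs{\Phi(x_i)}\leq\Phi(\abs{x_i})$ is redundant once you use the pointwise H\"older bound $\sum_i a_i\xi_i\leq(\sum_i\abs{\xi_i}^p)^{1/p}$ directly.
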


\begin{proposition}\label{log_estimate}
Let $X$ be a Banach space with a $1$-unconditional basis $(e_n)_{n=1}^\infty$ with respect to which it is $C$-asymptotic $\ell_p$, for some $1\leq p\leq \infty$ and $C\geq 1$. Then there is a~constant $B>0$ depending only on $C$ such that for every $n\geq 2$ and all positive vectors $x_1,\ldots,x_n\in X$ we have
$$
\n{x_1\star_{\ssp}\ldots\star_{\ssp}x_n}\geq\frac{\bigl(\n{x_1}^p+\ldots+\n{x_n}^p\bigr)^{\! 1/p}}{B(\log n)^2}.
$$
\end{proposition}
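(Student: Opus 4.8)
\textbf{Proof strategy for Proposition~\ref{log_estimate}.} The plan is to reduce the estimate to a finite-dimensional situation controlled by Proposition~\ref{asymp}, and then to run a dyadic pigeonhole argument on the norms $\n{x_i}$ so that the unavoidable loss of Banach--Mazur constant $3C^8$ (applied $O(\log n)$ times) accumulates to at most $B(\log n)^2$. First I would normalize: dividing all vectors by $\max_i\n{x_i}$ we may assume $\max_i\n{x_i}=1$, so every $\n{x_i}\in(0,1]$. Since $x_1\star_{\ssp}\cdots\star_{\ssp}x_n$ only depends on finitely many coordinates of the $x_i$, I may work in a finite-dimensional sublattice; moreover, by a small perturbation and by shifting supports far out (using $1$-unconditionality, which makes translating supports an isometry on the relevant coordinate subspace, and noting $\star_{\ssp}$-norms are unchanged under a common support shift) I may assume all $x_i$ are supported after $[1,m]$ for a large $m$ to be fixed later—this is the hypothesis needed to invoke Proposition~\ref{asymp}.

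\textbf{The dyadic decomposition.} Partition $\{1,\ldots,n\}$ into blocks $A_0,A_1,\ldots,A_L$ where $A_\ell=\{i\colon 2^{-\ell-1}<\n{x_i}\leq 2^{-\ell}\}$ and $A_L$ collects the tail of very small vectors; one checks that only $L=O(\log n)$ blocks carry a non-negligible share of $\sum_i\n{x_i}^p$, because vectors with $\n{x_i}^p\leq n^{-2}\max_j\n{x_j}^p$ contribute at most $n^{-1}$ of the total $p$-th power sum, which is absorbed into the constant. So fix $L=\lceil 2p\log_2 n\rceil$ (or the $p=\infty$ analogue) and discard the rest. Within a single block $A_\ell$, all vectors have comparable norm, so $x_{A_\ell}^{\star_{\ssp}}\coloneqq\star_{\ssp,i\in A_\ell}x_i$ has $\star_{\ssp}$-norm comparable (up to a factor $2$) to $|A_\ell|^{1/p}2^{-\ell}$ times the norm of the $\ell_p$-average; here I would use Proposition~\ref{asymp} applied to the (at most) $|A_\ell|$-dimensional span to get a positive $3C^8$-isomorphism $\Phi$ onto a subspace of some $\ell_p^N$, then use Krivine's inequality (Lemma~\ref{krivine}) in both directions together with the exact $\ell_p$ identity $\|(\sum_i|\Phi x_i|^p)^{1/p}\|_{\ell_p^N}=(\sum_i\|\Phi x_i\|^p)^{1/p}$ in $\ell_p^N$. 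This yields $\n{x_{A_\ell}^{\star_{\ssp}}}\geq (3C^8)^{-1}(\sum_{i\in A_\ell}\n{x_i}^p)^{1/p}$, a clean estimate with a loss of only one factor $3C^8$ per block. I need $m\geq |A_\ell|$ for each $\ell$, so $m=n$ suffices for the support-shift step.

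\textbf{Combining the blocks.} Now I have $n$ replaced by $L+1=O(\log n)$ vectors $y_\ell\coloneqq x_{A_\ell}^{\star_{\ssp}}$ with $\n{y_\ell}\geq(3C^8)^{-1}(\sum_{i\in A_\ell}\n{x_i}^p)^{1/p}$, and by associativity $x_1\star_{\ssp}\cdots\star_{\ssp}x_n=y_0\star_{\ssp}\cdots\star_{\ssp}y_L$. I iterate: pair up the $y_\ell$'s dyadically and apply the same Proposition~\ref{asymp}/Krivine argument $\lceil\log_2(L+1)\rceil=O(\log\log n)$ further times—but this is not quite enough, since each such application now only handles a fixed number of vectors with the asymptotic-$\ell_p$ bound (which requires the number of vectors to not exceed their common support-start). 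The cleaner route is: at stage $t$ I have at most $n/2^{t}$-ish vectors, still supported after $[1,n]$, and I merge all currently-surviving vectors in one shot, paying one factor $3C^8$, but I must ensure the number of vectors at each stage is $\leq n$, which holds trivially. Actually the simplest honest accounting: apply Proposition~\ref{asymp} once to the $L+1$ vectors $y_\ell$ directly (they are $\leq L+1\leq n$ vectors supported after $[1,n]$, so the hypothesis is met), paying one more factor $3C^8$, to get
\begin{equation*}
\n{y_0\star_{\ssp}\cdots\star_{\ssp}y_L}\geq\frac{1}{3C^8}\Bigl(\sum_{\ell=0}^{L}\n{y_\ell}^p\Bigr)^{\!1/p}\geq\frac{1}{(3C^8)^2}\Bigl(\sum_{\ell=0}^{L}\sum_{i\in A_\ell}\n{x_i}^p\Bigr)^{\!1/p}\geq\frac{1}{2(3C^8)^2}\Bigl(\sum_{i=1}^n\n{x_i}^p\Bigr)^{\!1/p},
\end{equation*}
the last step recovering the discarded tiny vectors at the cost of the factor $2$. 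But this gives a constant, not $B(\log n)^2$—which would be a \emph{stronger} statement, so something must fail. The gap is that Proposition~\ref{asymp} compares an $n$-dimensional space to $\ell_p^N$, but $N$ is enormous and, crucially, Krivine's inequality in the direction I need (lower bound on $\n{(\sum|x_i|^p)^{1/p}}$) requires $\Phi^{-1}$, hence the factor $\n{\Phi^{-1}}\leq 3C^8$ is fine, but the genuinely lossy point is that Proposition~\ref{asymp} needs the spanning vectors supported after $[1,\dim]$, and after the first merge the $y_\ell$ are supported after $[1,n]$ while there are only $L+1\ll n$ of them, which is \emph{fine}—so the honest obstacle is elsewhere: it is that $\star_{\ssp}$ of positive vectors need not be representable by a block sequence, so one must first split $X$ into $n$ ``copies'' by a further support-separation, and \emph{this} separation, to keep the asymptotic-$\ell_p$ estimate valid when combining vectors whose supports overlap the initial segment, forces an iterative (not one-shot) scheme, and it is the depth $O(\log n)$ of that scheme, with a per-level loss that is itself $O(\log n)$ because the number of vectors handled per $\ell_p^N$-embedding must stay bounded by the support-start, that produces the $(\log n)^2$. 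The main obstacle, then, is exactly this bookkeeping: arranging the supports and the recursion so that (a) at every level Proposition~\ref{asymp} genuinely applies, and (b) the total multiplicative loss is $\exp(O(\log\log n))\cdot\text{(poly loss)}$, which one massages into the stated $B(\log n)^2$. I would handle it by a two-parameter induction on $n$, splitting into roughly $n/\log n$ groups of size $\log n$, embedding each group into $\ell_p^{N}$ with $N\leq(4(\log n)^2)^{\log n}\leq n^{O(1)}$ so that the support-after-$[1,\dim]$ hypothesis is met after one support shift, and then recursing on the $\leq n/\log n$ resulting vectors; the recursion depth is $O(\log n/\log\log n)$ and the per-level loss $O((\log\log n)^2)$ or so, whose product is $\leq B(\log n)^2$ for a suitable absolute $B$ (and the $p=\infty$ case is identical with max replacing $\ell_p$-sums). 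The delicate point throughout, and the one I would write out most carefully, is that all isomorphisms produced are \emph{positive}, so that Krivine's Lemma~\ref{krivine} is legitimately applicable at each step and the operation $\star_{\ssp}$ is respected.
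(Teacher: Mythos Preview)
Your proposal contains a genuine gap at the very first reduction step. You write that ``shifting supports far out'' is an isometry by $1$-unconditionality; this is false. A $1$-unconditional basis guarantees only that changing \emph{signs} of coordinates is an isometry, not that the shift operator $e_k\mapsto e_{k+m}$ is. In an asymptotic $\ell_p$ space which is not $\ell_p$ (Tsirelson's space being the prototype), the norm of a vector depends essentially on \emph{where} its support sits, so you cannot arrange that all $x_i$ are supported after $[1,n]$ without changing their norms. This invalid shift is exactly what lets you conclude a constant bound midway through your argument; your subsequent attempt to locate the ``real'' source of the $(\log n)^2$ loss in an iterative support-separation scheme is a symptom of this error, not a cure for it.

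The paper's proof confronts the initial segment head-on rather than shifting it away. It splits $z=x_1\star_{\ssp}\cdots\star_{\ssp}x_n$ as $P_{[1,n]}z+P_{(n,\infty)}z$ (after a small manoeuvre to dispose of the first coordinate). For $P_{(n,\infty)}z$ one is in the good regime---the vectors $P_{(n,\infty)}x_i$ span an $n$-dimensional space supported after $[1,n]$, so Proposition~\ref{asymp} and Krivine's inequality apply directly and give a constant-factor estimate, essentially as in your dyadic block argument (but in one shot, with no dyadic decomposition needed). The $(\log n)^2$ comes entirely from the \emph{other} piece $P_{[2,n]}z$: on $\mathrm{span}\{e_i\}_{i=1}^n$ the norm is only $(A\log n)$-equivalent to the $\ell_p^n$-norm (this follows by iterating \eqref{asymp_def} along a dyadic decomposition of $[1,n]$), and one pays this $\log n$ factor twice---once going from $\|P_{[2,n]}z\|$ to the $\ell_p^n$-norm, and once coming back from $\|P_{[2,n]}x_i\|_{\ell_p^n}$ to $\|P_{[2,n]}x_i\|$. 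Combining the two pieces via the asymptotic $\ell_p$ lower bound finishes the proof.
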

\begin{proof}
We write down the proof assuming that $p<\infty$, as the case $p=\infty$ is trivial. We start with noticing that recursive application of inequality \eqref{asymp_def} leads to the fact that for a~suitable constant $A>0$ and every $n\geq2$ we have
\begin{equation}\label{logg}
\frac{1}{A\log n}\n{x}_{\ell_p^n}\leq\n{x}\leq (A\log n)\n{x}_{\ell_p^n}\quad\mbox{whenever }x\in\mathrm{span}\{e_i\}_{i=1}^n
\end{equation}
(just apply \eqref{asymp_def} to the vectors $\sum_{2^{-j}n<i\leq 2^{-j+1}n}e_i^\ast(x)e_i$ with $1\leq j\leq\lceil\log_2n\rceil$).

Set $z=x_1\star_{\ssp}\ldots\star_{\ssp}x_n$. We will prove our assertion by estimating separately the norms $\n{P_{[1,n]}z}$ and $\n{P_{(n,\infty)}z}$. Since splitting into two parts is allowed only for vectors supported on $[2,\infty)$, we shall first deal with the case where $e_1^\ast(x_i)$'s are relatively large for enough many $i$'s. So, let us define
$$
I=\Biggl\{1\leq i\leq n\colon e_1^\ast(x_i)\geq\frac{1}{2}\n{x_i}\Biggr\}\,\,\mbox{ and }\,\, J=\{j_1,\ldots,j_k\}=\{1,\ldots,n\}\setminus I.
$$
Note that the norm of $z$ is at least equal to
\begin{equation*}
\Biggl\|\Biggl(\sum_{i\in I}e_1^\ast(x_i)^p\Biggr)^{\!\!\! 1/p}\!\!\!e_1+P_{[2,\infty)}(x_{j_1}\star_{\ssp}\ldots\star_{\ssp}x_{j_k})\Biggr\|\geq \frac{1}{2}\Biggl(\sum_{i\in I}\n{x_i}^p\Biggr)^{\!\!\! 1/p}\,\vee\,\bigl\|P_{[2,\infty)}(x_{j_1}\star_{\ssp}\ldots\star_{\ssp}x_{j_k})\bigr\|.
\end{equation*}
Of course, $\sum_{i\in I}\n{x_i}^p$ and $\sum_{j\in J}\n{x_j}^p$ cannot be simultaneously less than $\frac{1}{2}\sum_{i=1}^n\n{x_i}^p$. Therefore, in the rest of the proof we can (and we do) assume that $\supp(z)\subset [2,\infty)$. Obviously, we can also assume that $\supp(x_i)$ is finite for every $1\leq i\leq n$.

Since $X$ is $C$-asymptotic $\ell_p$, we have
\begin{equation}\label{z_estimate}
\n{z}\geq\frac{1}{C}\Bigl(\bigl\|P_{[2,n]}z\bigr\|^p+\bigl\|P_{(n,\infty)}z\bigr\|^p\Bigr)^{\!\! 1/p}.
\end{equation}
Now, we shall estimate both these summands separately. Firstly, by \eqref{logg} we have 
\begin{equation}\label{log1}
\begin{split}
\bigl\|P_{[2,n]}z\bigr\|^p &\geq \frac{1}{(A\log n)^p}\bigl\|P_{[2,n]}z\bigr\|_{\ell_p^n}^p\\ &=\frac{1}{(A\log n)^p}\sum_{i=1}^n \bigl\|P_{[2,n]}x_i\bigr\|_{\ell_p^n}^p
\geq\frac{1}{(A\log n)^{2p}}\sum_{i=1}^n\bigl\|P_{[2,n]}x_i\bigr\|^p.
\end{split}
\end{equation}
Secondly, set $F\coloneqq\mathrm{span}\{P_{(n,\infty)}x_i\}_{i=1}^n$ and let $\Phi\colon F\to\ell_p^N$ be a~positive $3C^8$-isomorphism produced by Proposition~\ref{asymp}. Using Lemma~\ref{krivine} we obtain
\begin{equation*}
\begin{split}
\bigl\|P_{(n,\infty)}z\bigr\| &=\Biggl\|P_{(n,\infty)}\Biggl(\sum_{i=1}^n\abs{x_i}^p\Biggr)^{\!\! 1/p}\Biggr\|\\
&\geq \frac{1}{\n{\Phi}}\Biggl\|\Biggl(\sum_{i=1}^n\abs{\Phi(P_{(n,\infty)}x_i)}^p\Biggr)^{\!\! 1/p}\Biggr\|_{\ell_p^N}=
\frac{1}{\n{\Phi}}\Biggl(\sum_{i=1}^n\bigl\|\Phi(P_{(n,\infty)}x_i)\bigr\|_{\ell_p^N}^p\Biggr)^{\!\! 1/p},
\end{split}
\end{equation*}
where the last equality follows simply by changing the order of summation. Therefore,
\begin{equation}\label{log2}
\bigl\|P_{(n,\infty)}z\bigr\|\geq \frac{1}{\n{\Phi}\n{\Phi^{-1}}}\Biggl(\sum_{i=1}^n\bigl\|P_{(n,\infty)}x_i\bigr\|^p\Biggr)^{\!\! 1/p}\geq \frac{1}{3C^8}\Biggl(\sum_{i=1}^n\bigl\|P_{(n,\infty)}x_i\bigr\|^p\Biggr)^{\!\! 1/p}.
\end{equation}
Notice that \eqref{asymp_def} obviously implies 
\begin{equation}\label{uv}
\bigl\|P_{[2,n]}x_i\bigr\|^p+\bigl\|P_{(n,\infty)}x_i\bigr\|^p\geq\frac{\n{x_i}^p}{C^p}\quad\mbox{for every }1\leq i\leq n.
\end{equation}
Combining \eqref{z_estimate}, \eqref{log1}, \eqref{log2} and \eqref{uv} we obtain
\begin{equation*}
\begin{split}
\n{z} \geq\frac{1}{C\!\cdot\!\max\{3C^8, (A\log n)^2\}} \Biggl(\sum_{i=1}^n &\Bigl(\bigl\|P_{[2,n]}x_i\bigr\|^p+\bigl\|P_{(n,\infty)}x_i\bigr\|^p\Bigr)\Biggr)^{\!\! 1/p}\\
&\,\quad\geq \frac{\bigl(\n{x_1}^p+\ldots+\n{x_n}^p\bigr)^{\! 1/p}}{B(\log n)^2},
\end{split}
\end{equation*}
with an appropriate constant $B>0$.
\end{proof}

\begin{remark}
We have trivially $\n{x}\leq\n{x}_{\ell_1}$ for every $x\in X$, so in the case $p=1$ we do not need to square the logarithm ({\it cf. }inequality \eqref{logg}). For instance, what in fact has been proved for Tsirelson's space $\TT^\ast$ is the following estimate:
$$
\n{x_1+\ldots+x_n}\geq \frac{\n{x_1}+\ldots+\n{x_n}}{B\log n}
$$
for each $n\geq 2$ and all positive vectors $x_1,\ldots,x_n\in\TT^\ast$. The important task is, of course, that we do not assume anything about the supports of $x_i$'s.
\end{remark}

\begin{corollary}\label{pq-log_estimate}
Let $X$ be a Banach space with a $1$-unconditional basis $(e_n)_{n=1}^\infty$ with respect to which it is $C$-asymptotic $\ell_p$, for some $1\leq p\leq \infty$ and $C\geq 1$. There is a~constant $B>0$ depending only on $C$ such that for all $q\geq p$, $n\geq 2$ and all positive vectors $x_1,\ldots,x_n\in X$ we have
$$
\n{x_1\star_{\ssq}\ldots\star_{\ssq}x_n}\geq\frac{\bigl(\n{x_1}^p+\ldots+\n{x_n}^p\bigr)^{\! 1/p}}{Bn^{1/p-1/q}(\log n)^2}.
$$
\end{corollary}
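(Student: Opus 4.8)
The plan is to reduce the mixed $\star_{\ssq}$ estimate to the pure $\star_{\ssp}$ estimate of Proposition~\ref{log_estimate} by a~pointwise comparison of the Yudin--Krivine functional calculus expressions. First I would observe that for positive vectors, the element $x_1\star_{\ssq}\ldots\star_{\ssq}x_n$ has $n$-th coordinate $\bigl(\sum_{i=1}^n e_n^\ast(x_i)^q\bigr)^{1/q}$, whereas $x_1\star_{\ssp}\ldots\star_{\ssp}x_n$ has $n$-th coordinate $\bigl(\sum_{i=1}^n e_n^\ast(x_i)^p\bigr)^{1/p}$. Since $q\geq p$, the standard inclusion $\ell_p^n\hookrightarrow\ell_q^n$ gives, coordinatewise,
$$
\Bigl(\sum_{i=1}^n e_m^\ast(x_i)^p\Bigr)^{1/p}\leq n^{1/p-1/q}\Bigl(\sum_{i=1}^n e_m^\ast(x_i)^q\Bigr)^{1/q}\qquad\text{for every }m\in\N,
$$
so by $1$-unconditionality of the basis we get the inequality $\n{x_1\star_{\ssp}\ldots\star_{\ssp}x_n}\leq n^{1/p-1/q}\,\n{x_1\star_{\ssq}\ldots\star_{\ssq}x_n}$.

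Next I would simply chain this with Proposition~\ref{log_estimate}: that proposition supplies a~constant $B_0=B_0(C)>0$ with
$$
\n{x_1\star_{\ssp}\ldots\star_{\ssp}x_n}\geq\frac{\bigl(\n{x_1}^p+\ldots+\n{x_n}^p\bigr)^{1/p}}{B_0(\log n)^2},
$$
and combining the two displays yields
$$
\n{x_1\star_{\ssq}\ldots\star_{\ssq}x_n}\geq n^{-(1/p-1/q)}\,\n{x_1\star_{\ssp}\ldots\star_{\ssp}x_n}\geq\frac{\bigl(\n{x_1}^p+\ldots+\n{x_n}^p\bigr)^{1/p}}{B_0\,n^{1/p-1/q}(\log n)^2},
$$
which is exactly the claimed bound with $B=B_0$, so the constant indeed depends only on $C$.

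There is really no serious obstacle here; the only points requiring a~moment of care are (i) checking that the coordinatewise comparison between the $\ell_p$- and $\ell_q$-functional calculus expressions is valid for \emph{all} coordinates simultaneously and hence transfers to the lattice norm via $1$-unconditionality, and (ii) confirming that positivity of the $x_i$'s is what makes $e_m^\ast(x_1\star_{\ssq}\ldots\star_{\ssq}x_n)=\bigl(\sum_i e_m^\ast(x_i)^q\bigr)^{1/q}$ literally hold (so that no absolute values or sign cancellations intervene). Since Proposition~\ref{log_estimate} already did the genuine work—invoking the disjointization Proposition~\ref{asymp}, Krivine's inequalities and the $\log$-type estimate \eqref{logg}—the present corollary is a~one-line consequence once the $\ell_p$-into-$\ell_q$ factor $n^{1/p-1/q}$ is inserted.
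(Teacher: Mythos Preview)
Your proof is correct and follows essentially the same route as the paper's: both arguments use the power-mean (equivalently, H\"older) inequality coordinatewise to obtain $\n{x_1\star_{\ssp}\ldots\star_{\ssp}x_n}\leq n^{1/p-1/q}\n{x_1\star_{\ssq}\ldots\star_{\ssq}x_n}$ via $1$-unconditionality, and then invoke Proposition~\ref{log_estimate}.
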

\begin{proof}
By the inequality between power means, we have $(\frac{1}{n}\sum_{i=1}^na_i^q)^{1/q}\geq (\frac{1}{n}\sum_{i=1}^na_i^p)^{1/p}$ for all non-negative numbers $a_1,\ldots,a_n$. Therefore, each coordinate of $x_1\star_{\ssq}\ldots\star_{\ssq}x_n$ is not less than the corresponding coordinate of $x_1\star_{\ssp}\ldots\star_{\ssp}x_n$ divided by $n^{1/p-1/q}$. The assertion hence follows from Proposition~\ref{log_estimate}.
\end{proof}

\begin{theorem}\label{power}
Let $\EE$ be a Banach space with a normalized, shrinking, $1$-unconditional basis $(e_n)_{n=1}^\infty$ such that for some $p\in [1,\infty)$ its dual $\EE^\ast$ is asymptotic $\ell_p$ with respect to $(e_n^\ast)_{n=1}^\infty$. Then for every power type bounded sequence $(X_n)_{n=1}^\infty$ we have
$$
p\Bigl(\!\Bigl(\bigoplus_{n=1}^\infty X_n\Bigr)_{\!\!\EE}\Bigr)=\max\bigl\{p,\,\mathfrak{p}(X_n)_{n=1}^\infty\bigr\}.
$$
\end{theorem}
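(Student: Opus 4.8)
The plan is to prove the two inequalities $p((\bigoplus X_n)_\EE)\geq\max\{p,\mathfrak p(X_n)\}$ and $p((\bigoplus X_n)_\EE)\leq\max\{p,\mathfrak p(X_n)\}$ separately. Write $X=(\bigoplus_{n=1}^\infty X_n)_\EE$ and $r=\max\{p,\mathfrak p(X_n)_{n=1}^\infty\}$. The lower bound is the easy half. On the one hand $\EE$ embeds isometrically in $X$ (put a fixed unit vector in each coordinate space and look at the closed span), so $p(X)\geq p(\EE)=p$ by Lemma~\ref{type>=p} and the isomorphic invariance of the power type. On the other hand, for any $q<\mathfrak p(X_n)$ we have $\sup_n C_q(X_n)=\infty$, so passing to a single well-chosen $X_{n_0}$ with $\Sz(X_{n_0},\e)$ large and transplanting an $\e$-Szlenk-witnessing tree-map from $X_{n_0}^\ast$ into $X^\ast$ via the coordinate embedding $X_{n_0}^\ast\hookrightarrow X^\ast$ (which is weak$^\ast$-to-weak$^\ast$ continuous and isometric because $e_{n_0}$ is a norm-one basis vector), Lemma~\ref{gkl_lemma} shows $\Sz(X,\e)$ is large along a sequence of $\e$'s forcing $p(X)\geq q$; letting $q\uparrow\mathfrak p(X_n)$ gives $p(X)\geq\mathfrak p(X_n)_{n=1}^\infty$. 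Hence $p(X)\geq r$.

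The real content is the upper bound $p(X)\leq r$. Fix $r<s$; I will show $\Sz(X,\e)\leq C\e^{-s}$ for some $C$ and all $\e\in(0,1)$, using the necessity direction of Lemma~\ref{gkl_lemma}. So suppose $\iota_{\e}^m B_{X^\ast}\neq\varnothing$ for some $m$; then there is a weak$^\ast$-null tree-map $(\boldsymbol x_a^\ast)_{a\in S}$ in $X^\ast=(\bigoplus X_n^\ast)_\FF$ with $\n{\boldsymbol x_a^\ast}\geq\tfrac14\e$ for $1\leq|a|\leq m$ and $\n{\sum_{a\in\beta}\boldsymbol x_a^\ast}\leq1$ along every branch. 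The goal is a bound $m\leq C\e^{-s}$. Choose $r<t<s$. Because the sequence is power type bounded, for each fixed coordinate $X_n^\ast$ we may (via Proposition~\ref{p-renorming} applied with exponent $q$ chosen between $t$ and $s$, and uniformly in $n$ since $\sup_n C_t(X_n)<\infty$) pass to equivalent norms $|\cdot|_n$ on $X_n$ whose duals satisfy the $\ell_q$-type lower estimate of condition~(iii) with a uniform constant $\gamma$; and simultaneously the basis $(e_n^\ast)$ of $\EE^\ast$ gives the asymptotic-$\ell_p$ (hence, by Corollary~\ref{pq-log_estimate} with that same $q\geq p$, a controlled lower) estimate. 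The strategy is now the branch-building/pruning argument familiar from the proof of Theorem~\ref{c0-sum}: walk down the tree, and at each level either the `mass' of $\boldsymbol x_a^\ast$ concentrates in already-seen coordinates (in which case condition~(iii) of Proposition~\ref{p-renorming} lets us increase, in the $\ell_q$ sense, the norm contribution of a previously fixed block of coordinates, exactly as the $\ell_p$-renorming estimate $\liminf|x^\ast+x_n^\ast|\geq(|x^\ast|^q+\gamma\tau^q)^{1/q}$ is used), or the mass escapes to new (large-index) coordinates, in which case we cut, lose only a small prescribed amount $\eta$, and record a fresh block of size $\gtrsim\e$. After $m$ levels the accumulated vector $z^\ast$ in a product of these renormed blocks must satisfy $|z^\ast|\leq$ (essentially) $1$ because it is dominated by a branch sum; but the escape-steps contribute, through the $\EE$-norm estimate—here is where Corollary~\ref{pq-log_estimate} and the asymptotic-$\ell_p$ lower bound in $\EE^\ast$ enter—a lower bound roughly of the form $|z^\ast|\gtrsim \e\,(\#\text{escape steps})^{1/q-\text{(log correction)}}$, while the concentration-steps contribute via the iterated $(|\cdot|^q+\gamma(\cdot)^q)^{1/q}$ recursion a lower bound $\gtrsim \e\,(\#\text{concentration steps})^{1/q}$ in each block. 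Balancing the number of blocks against their lengths and absorbing the $(\log m)^2$ correction from Corollary~\ref{pq-log_estimate} yields $m\leq C\e^{-s}$ for every $\e$, provided $s>t>q>$ enough. Letting $s\downarrow r$ completes the proof.

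The main obstacle, and the step that needs the most care, is the bookkeeping in the escape/concentration dichotomy when $\EE^\ast$ is a genuine asymptotic $\ell_p$ space rather than literally $\ell_p$ or $c_0$: the $\ell_p$-type estimates of Proposition~\ref{p-renorming} live in the individual coordinate duals $X_n^\ast$, but to assemble them one must combine many coordinate blocks inside the $\FF$-norm, and the asymptotic hypothesis only controls \emph{block} sequences supported far out—so the escape steps must be organized so that the new blocks are genuinely `later' in the basis (this is why one keeps choosing $N_{k+1}>N_k$ and cutting tails of size $<\eta$, just as in Theorem~\ref{c0-sum}), and the inevitable loss of a $(\log m)^2$ factor from Corollary~\ref{pq-log_estimate} must be shown harmless because we only aim for the non-attained infimum $p(X)<s$, not for an estimate at the critical exponent itself. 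A secondary technical point is verifying the uniformity of the constant $\gamma$ across all coordinates $X_n$: this requires Proposition~\ref{p-renorming} to be invoked with the single bound $B=\sup_n C_t(X_n)<\infty$ and fixed $q,t$, which is legitimate precisely because $\gamma=\gamma(B,q,t)$ depends only on those data.
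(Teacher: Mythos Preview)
Your overall strategy matches the paper's: the lower bound via embeddings and Lemma~\ref{type>=p}, and the upper bound via the tree-map from Lemma~\ref{gkl_lemma}, uniform renorming of the coordinates through Proposition~\ref{p-renorming}, a branch-building argument with an escape/concentration dichotomy, and the log-type estimate of Corollary~\ref{pq-log_estimate} to finish. The ingredients are right, and your observation about the uniformity of~$\gamma$ and about absorbing the $(\log m)^2$ loss into the slack $s>r$ is exactly how the paper reasons.

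Where your sketch parts company with the paper is in the bookkeeping of the concentration case, and this is not a harmless detail. You describe the concentration step as increasing ``the norm contribution of a previously fixed block of coordinates'' and then propose to ``balance the number of blocks against their lengths''. But in a genuine concentration step the mass of $P_{[1,N_k]}x_b^\ast$ can be spread arbitrarily thin across \emph{all} previously seen coordinates simultaneously, so there is no single block in which condition~(iii) of Proposition~\ref{p-renorming} gives a useful $\ell_q$-increment (the hypothesis $\abs{x_n^\ast}\geq\tau$ may fail in every block individually), and no clean count of ``concentration steps in block~$j$''. In the $c_0$-case (Theorem~\ref{c0-sum}) this is resolved by the exact additivity of the $\ell_1$ dual norm across blocks; that mechanism is unavailable here.

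The paper's resolution is to abandon blocks altogether and track the process \emph{coordinate-wise in~$\EE^\ast$}. At every step~$k$ (both type~I and type~II) it records a positive vector $v_k\in\EE^\ast$ with $\n{v_k}\gtrsim\e$, essentially $v_k=(\abs{P_i x_{a_k}^\ast}_i)_i$ truncated appropriately, and maintains the single coordinate-wise inequality
\[
\Bigl|P_i\Bigl(\sum_{j\leq k}\w x_{a_j}^\ast\Bigr)\Bigr|_i\;>\;\Bigl(\tfrac{\gamma}{2}\Bigr)^{1/q}\,e_i^\ast\bigl(v_1\star_{\ssq}\ldots\star_{\ssq}v_k\bigr)
\]
for all relevant $i$. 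The $\star_{\ssq}$ operation (i.e.\ the Yudin--Krivine $(\sum\abs{\,\cdot\,}^q)^{1/q}$) is precisely what absorbs the overlap in concentration steps and the disjointness in escape steps into one formula, and is the reason Corollary~\ref{pq-log_estimate} is stated for $\star_{\ssq}$ of \emph{arbitrary} positive vectors, not just disjoint ones. A single application of that corollary to $v_1\star_{\ssq}\ldots\star_{\ssq}v_N$ then gives $\vertiii{\sum_{a\in\beta}\w x_a^\ast}\gtrsim \e\,N^{\min\{1/p,1/q\}}/(\log N)^2$ directly, with no balancing needed. This device is the missing idea in your sketch.
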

\begin{proof}
Set $X=(\bigoplus_{n=1}^\infty X_n)_\EE$ and fix any $\e\in (0,1)$, $N\in\N$, $N\geq 2$ so that $\iota_\e^N B_{X^\ast}\not=\varnothing$. Then, by Lemma~\ref{gkl_lemma}, there is a~weak$^\ast$-null tree-map $(x_a^\ast)_{a\in S}$ in $X^\ast$ such that $\n{x_a^\ast}\geq\frac{1}{4}\e$ for each $a\in S$ and $\n{\sum_{a\in\beta}x_a^\ast}\leq 1$ for each branch $\beta\subset S$. 

Fix any $q,r$ with $\mathfrak{p}(X_n)_{n=1}^\infty<r<q$; in the case where $\mathfrak{p}(X_n)_{n=1}^\infty<p$, we also require that $q<p$. In each case we have $p(X_n)<r<q$ for every $n\in\N$ and hence there exists a~common constant $B>0$ satisfying $\Sz(X_n,\e)\leq B\e^{-r}$ for all $\e\in (0,1)$ and $n\in\N$.

For $n\in\N$, let $\abs{\,\cdot\,}_n$ be a~norm produced by Proposition~\ref{p-renorming} when applied to the space $X_n$; the corresponding constant $\gamma=\gamma(B,q,r)$ is common for all $n$'s. Define
$$
\vertiii{x^\ast}=\Biggl\|\sum_{n=1}^\infty\left|x_n^\ast\right|_{n}\! e_n^\ast\Biggr\|\quad\mbox{for }x^\ast=(x_n^\ast)_{n=1}^\infty\in X^\ast,
$$
which is plainly a~$2$-equivalent dual norm on $X^\ast$, in fact, $\n{x^\ast}\leq\vertiii{x^\ast}\leq 2\n{x^\ast}$ for every $x^\ast\in X^\ast$. Likewise in the proof of Theorem~\ref{c0-sum}, we shall extract a~partial branch $\beta\subset S$ and force a~lower estimate for $\n{\sum_{a\in\beta}x_a^\ast}$, this time in terms of the $\ell_p$-norm of $(\n{x_a^\ast})_{a\in\beta}$.

For any $a\in S$ we denote by $a^+$ the set of all successors of $a$. If $\abs{a}<N$ and $M\subseteq a^+$ is an infinite set, we define
$$
\nu(a,M)=\sup_{b\in M}\min\Biggl\{n\in\N\colon \bigl\|P_{[1,n]}x_b^\ast\bigr\|\geq\frac{1}{8}\e\Biggr\}.
$$
We call $a$ {\it of type I} if there exists an infinite set $M\subseteq a^+$ so that $\nu(a,M)<\infty$, and in this case we fix one such set and call it $M_a$. We say that $a$ is {\it of type II} if it is not of type I. 

We are going to define a~partial branch $\beta=(a_0,a_1,\ldots ,a_N)$ by induction. Set $a_0=\varnothing$. Given $0\leq k<N$, suppose we have already selected $a_0,a_1,\ldots,a_k\in S$, together with positive, finitely supported vectors $v_1,\ldots,v_k\in\EE^\ast$ and finite intervals $I_1,\ldots,I_k\subset\N$ so that the following conditions are satisfied:
\begin{equation}\label{p0}
\vertiii{x_{a_j}^\ast-\w x_{a_j}^\ast}<\frac{1}{N}\quad\mbox{for each }1\leq j\leq k,\,\mbox{ where }\w x_{a_j}^\ast\coloneqq P_{I_j}x_{a_j}^\ast,
\end{equation}
\begin{equation}\label{p1}
\Biggl|P_i\Biggl(\sum_{j=1}^k\w x_{a_j}^\ast\Biggr)\Biggr|_i> \Bigl(\frac{\gamma}{2}\Bigr)^{\! 1/q}e_i^\ast(v_1\star_{\ssq}\ldots\star_{\ssq}v_k)\quad\mbox{for each }i\in\bigcup_{j=1}^k\supp(v_j)
\end{equation}
and
\begin{equation}\label{p2}
\n{v_j}\geq\frac{1}{48}\e\quad\mbox{for each }1\leq j\leq k.
\end{equation}
(So, for $k=0$ we do not assume anything.) Let us consider two possibilities.

\vspace*{2mm}\noindent
{\it Case 1. }$a_k$ is of type I. 

\vspace*{1mm}\noindent
Let $\nu(a_k)=\nu(a_k,M_{a_k})$. Then for every $b\in M_{a_k}$ we have $\n{P_{[1,\nu(a_k)]}x_b^\ast}\geq\frac{1}{8}\e$. By passing to an~infinite subset $M\subseteq M_{a_k}$ we may assume that there exist non-negative numbers $\tau_{k+1,i}$ with $1\leq i\leq \nu(a_k)$ such that for every $b\in M$ we have 
$$
\tau_{k+1,i}\leq\abs{P_ix_b^\ast}_i\leq 2\tau_{k+1,i}\quad\mbox{for each }1\leq i\leq\nu(a_k)\mbox{ with }\lim_{b\in M}\abs{P_ix_b^\ast}_i>0,
$$
whereas $\tau_{k+1,i}=0$ for all other $i$'s. Define a positive vector $v_{k+1}\in\EE^\ast$ by
$$
v_{k+1}=(\tau_{k+1,1},\ldots,\tau_{k+1,\nu(a_k)},0,0,\ldots).
$$
We can pass, if necessary, to another infinite subset of $M$ (still denoted by $M$) and assume that $\n{v_{k+1}}\geq\frac13\n{P_{[1,\nu(a_k)]}x^\ast_b}$. Now, we shall use properties of the norms $\abs{\,\cdot\,}_i$ for each $1\leq i\leq\max\bigcup_{j=1}^{k+1}\supp(v_j)$; for any such $i$ we apply Proposition~\ref{p-renorming} to $x^\ast=P_i\sum_{j=1}^k\w x_{a_j}^\ast\in X_i^\ast$ and the weak$^\ast$-null sequence $(P_ix_b^\ast)_{b\in M}\subset X_i^\ast$. As a~result, we obtain a~successor $a_{k+1}\in M$ of $a_k$ such that 
\begin{equation*}
\begin{split}
\Biggl|P_i\Biggl(\sum_{j=1}^{k}\w x_{a_j}^\ast &+x_{a_{k+1}}^\ast\Biggr)\Biggr|_i> \Biggl[\Biggl|P_i\Biggl(\sum_{j=1}^{k}\w x_{a_j}^\ast\Biggr)\Biggr|_i^q+\frac{\gamma}{2}(e_i^\ast(v_{k+1}))^q\Biggr]^{\! 1/q}\\
& \geq \Bigl(\frac{\gamma}{2}\Bigr)^{1/q}\Bigl(e_i^\ast(v_1\star_{\ssq}\ldots\star_{\ssq}v_k)^q+\tau_{k+1,i}^q\Bigr)^{\! 1/q}\\
& =\Bigl(\frac{\gamma}{2}\Bigr)^{1/q} e_i^\ast(v_1\star_{\ssq}\ldots\star_{\ssq}v_{k+1})\quad\mbox{for each }i\in\N\mbox{ with }e_i^\ast(v_{k+1})>0,
\end{split}
\end{equation*}
whereas for other values of $i$ for which $e_i^\ast(v_1\star_{\ssq}\ldots\star_{\ssq}v_k)>0$, the inequality above can be guaranteed by the induction hypothesis. Pick any finite interval $I_{k+1}\subset\N$ so that condition \eqref{p0} holds true for $j=k+1$ and the inequalities above remain valid after replacing $x_{a_{k+1}}^\ast$ by $\w x_{a_{k+1}}^\ast$. Observe also that 
$$
\n{v_{k+1}}\geq \frac{1}{6} \vertiii{P_{[1,\nu(a_k)]}x_{a_{k+1}}^\ast}\geq \frac{1}{48}\e.
$$

\vspace*{2mm}\noindent
{\it Case 2. }$a_k$ is of type II.

\vspace*{1mm}\noindent
In this case, we can choose $a_{k+1}\in a_k^+$ for which there exists a~finite interval $I\subset\N$ satisfying the following conditions:
\begin{itemize}[leftmargin=7mm]
\setlength\itemsep{.4em}
\item $\max I_j<\min I$ for every $1\leq j\leq k$,
\item $\max\supp(v_1\star_{\ssq}\ldots\star_{\ssq}v_{k})<I$,
\item $\bigl\|P_Ix_{a_{k+1}}^\ast\bigr\|\geq\frac{1}{8}\e$
\end{itemize}
and, moreover, 
\begin{equation}\label{p3}
\displaystyle{\Biggl|P_i\Biggl(\sum_{j=1}^{k}\w x_{a_j}^\ast+x_{a_{k+1}}^\ast\Biggr)\Biggr|_i>\Bigl(\frac{\gamma}{2}\Bigr)^{1/q} e_i^\ast(v_1\star_{\ssq}\ldots\star_{\ssq}v_k)}\quad\mbox{ for every }i\in\bigcup_{j=1}^k\supp(v_j).
\end{equation}
The last condition can be guaranteed in view of \eqref{p1} and the fact that the sequence $(x_b^\ast)_{b\in a^+}$ is weak$^\ast$-null. Define a~positive vector $v_{k+1}\in\EE^\ast$ by
$$
v_{k+1}=P_{I}\bigl(\bigl|P_ix_{a_{k+1}}^\ast\bigr|_i\bigr)_{i=1}^\infty
$$
(the outer projection symbol corresponds to the basis $(e_n^\ast)_{n=1}^\infty$ of $\EE^\ast$, while the inner to the direct sum decomposition of $X$) and pick a~finite interval $I_{k+1}\subset\N$ so that $I\subseteq I_{k+1}$, condition \eqref{p0} is valid for $j=k+1$ and inequality \eqref{p3} remains true when $x_{a_{k+1}}^\ast$ is replaced by  $\w x_{a_{k+1}}^\ast$. For $i\in I$, we obviously have
$$
\Biggl|P_i\Biggl(\sum_{j=1}^{k+1}\w x_{a_j}^\ast\Biggr)\Biggr|_i=\bigl|P_ix_{a_{k+1}}^\ast\bigr|_i=e_i^\ast(v_{k+1})=e_i^\ast(v_1\star_{\ssq}\ldots\star_{\ssq}v_{k+1}),
$$
whence by \eqref{p3} we obtain condition \eqref{p1} for $k+1$ in the place of $k$. Condition \eqref{p2} for $i=k+1$ holds true automatically. 

As a result, we obtain a partial branch $\beta=(a_0,a_1,\ldots,a_N)$ such that 
$$
\Biggl|P_i\Biggl(\sum_{a\in\beta}\w x_a^\ast\Biggr)\Biggr|_i\geq\Bigl(\frac{\gamma}{2}\Bigr)^{1/q}e_i^\ast(v_1\star_{\ssq}\ldots\star_{\ssq}v_n)\quad\mbox{for every }i\in\N
$$
and $\n{v_n}\geq\frac{1}{48}\e$ for every $1\leq n\leq N$. Hence, in the case where $p<q$, Corollary~\ref{pq-log_estimate} yields
\begin{equation*}
\begin{split}
3\geq \vertiii{\sum_{a\in\beta}\w x_a^\ast} &=\Biggl\|\Biggl(\Biggl|P_i\sum_{a\in\beta}\w x_a^\ast\Biggr|_i   \Biggr)_{\!\!i=1}^{\!\!\infty}\Biggr\|_{\EE^\ast} \geq\Bigl(\frac{\gamma}{2}\Bigr)^{1/q}\n{v_1\star_{\ssq}\ldots\star_{\ssq}v_N}_{\EE^\ast}\\
& \geq \Bigl(\frac{\gamma}{2}\Bigr)^{1/q}\frac{\bigl(\n{v_1}^p+\ldots+\n{v_N}^p\bigr)^{\! 1/p}}{BN^{1/p-1/q}(\log N)^2}\\
&\geq\frac{1}{48}\!\cdot\!\Bigl(\frac{\gamma}{2}\Bigr)^{\!1/q}\!\cdot\!\frac{N^{1/p}\e}{BN^{1/p-1/q}(\log N)^2}=\frac{1}{48}\!\cdot\!\Bigl(\frac{\gamma}{2}\Bigr)^{\!1/q}\!\!\cdot\!\frac{N^{1/q}\e}{B(\log N)^2}.
\end{split}
\end{equation*}
In the case where $p>q$ we have $\n{v_1\star_{\ssq}\ldots\star_{\ssq}v_N}\geq\n{v_1\star_{\ssp}\ldots\star_{\ssp}v_N}$ and hence by Proposition~\ref{log_estimate} we obtain
$$
3\geq\frac{1}{48}\!\cdot\!\Bigl(\frac{\gamma}{2}\Bigr)^{\!1/q}\!\!\cdot\!\frac{N^{1/p}\e}{B(\log N)^2}.
$$
Therefore, in both cases we have
$$
\frac{N^{\min\left\{1/p,1/q\right\}}}{(\log N)^2}\leq 144 B\Bigl(\frac{2}{\gamma}\Bigr)^{\!1/q}\e^{-1}.
$$
Since $\log N=o(N^\alpha)$ for each $\alpha>0$, the above inequality implies that for any exponent $s>\max\{p,q\}$ there is a~constant $C_s>0$ so that $N\leq C_s\e^{-s}$. We have thus proved that $p(X)\leq \max\{p,q\}$ and since $q$ could be taken  arbitrarily close to $\mathfrak{p}(X_n)_{n=1}^\infty$, we have $p(X)\leq\max\{p,\mathfrak{p}(X_n)_{n=1}^\infty\}$. The reverse inequality follows from Lemma~\ref{type>=p}.
\end{proof}

\begin{corollary}
For every separable Banach space $X$ with $\Sz(X)=\omega$ and any $p\in (1,\infty)$ we have $p(\ell_p(X))=\max\{q,p(X)\}$, where $p^{-1}+q^{-1}=1$. Similarly, $p(c_0(X))=p(X)$.
\end{corollary}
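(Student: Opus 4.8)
The plan is to invoke Theorem~\ref{power} with the base space $\EE=\ell_p$ (respectively $\EE=c_0$) and the constant sequence $X_n=X$ for every $n\in\N$. First I would check the hypotheses on $\EE$. For $1<p<\infty$ the unit vector basis of $\ell_p$ is normalized and $1$-unconditional, and it is shrinking because $\ell_p$ is reflexive; its biorthogonal sequence is the unit vector basis of $\ell_q$, and $\EE^\ast=\ell_q$ is trivially $1$-asymptotic $\ell_q$ with respect to it. Hence Theorem~\ref{power} applies, with the exponent that it calls $p$ being here the conjugate exponent $q$. For $\EE=c_0$ the same discussion works verbatim with $q$ replaced by $1$, since $(c_0)^\ast=\ell_1$ is $1$-asymptotic $\ell_1$ with respect to the unit vector basis.

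Next I would verify that the constant sequence $(X,X,\dots)$ is power type bounded and identify $\mathfrak{p}(X,X,\dots)$. Since $\Sz(X)=\omega$ we have $p(X)<\infty$, and by the remark following \eqref{p_def} there is, for each $\delta>0$, a constant $C$ with $\Sz(X,\e)\le C\e^{-p(X)-\delta}$ for $\e\in(0,1)$; thus $C_{p(X)+\delta}(X)<\infty$. Conversely, \eqref{p_def} shows that $\sup_{0<\e<1}\e^{r}\Sz(X,\e)=\infty$ whenever $r<p(X)$, so $C_r(X)=\infty$ for such $r$. As all the summands coincide with $X$, we get $\mathfrak{p}(X,X,\dots)=\inf\{r\in[1,\infty)\colon C_r(X)<\infty\}=p(X)<\infty$, so the sequence is power type bounded.

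Plugging these data into Theorem~\ref{power} then yields $p(\ell_p(X))=\max\{q,\,\mathfrak{p}(X,X,\dots)\}=\max\{q,p(X)\}$ and $p(c_0(X))=\max\{1,\,\mathfrak{p}(X,X,\dots)\}=\max\{1,p(X)\}=p(X)$, the last equality holding because $p(X)\ge1$ always. I do not anticipate any real obstacle: the only points that need a line of care are correctly matching the role played by $\EE^\ast$ in Theorem~\ref{power} (so that the exponent that appears is the conjugate $q$ rather than $p$) and the identification $\mathfrak{p}(X,X,\dots)=p(X)$, which is immediate from the two equivalent descriptions of the Szlenk power type recorded around \eqref{p_def}. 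For completeness one may also remark that the inequality $p(\ell_p(X))\ge q$ already follows from Lemma~\ref{type>=p} applied to $\ell_p$ together with the isometric embedding of $\ell_p$ into $\ell_p(X)$ and the isomorphic invariance of the Szlenk power type, but this is entirely subsumed by the appeal to Theorem~\ref{power}.
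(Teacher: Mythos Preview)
Your proposal is correct and is exactly the intended argument: the paper states this corollary without proof, precisely because it follows immediately from Theorem~\ref{power} applied to $\EE=\ell_p$ (resp.\ $\EE=c_0$) and the constant sequence $X_n=X$, together with the identification $\mathfrak{p}(X,X,\dots)=p(X)$ via \eqref{p_def}. One tiny quibble: for $\EE=c_0$ the basis is shrinking not because of reflexivity but because $(e_n^\ast)$ spans $\ell_1=c_0^\ast$; otherwise everything is in order.
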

\begin{corollary}
The space $\TT(c_0)$ has Szlenk power type $1$ but does not have summable Szlenk index.
\end{corollary}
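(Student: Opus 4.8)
This corollary is a formal synthesis of Theorem~\ref{power} and Example~\ref{T(c0)}, so the plan is to quote those two results and check that their hypotheses are met; I do not expect a genuine obstacle. The statement has two halves. For the failure of summability of the Szlenk index I would simply invoke Example~\ref{T(c0)}: there, for every $k\in\N$, a weak$^\ast$-null tree-map of height $n_k+1$ is constructed witnessing $\iota_{t_{k,1}}\cdots\iota_{t_{k,n_k}}B_{\TT(c_0)^\ast}\neq\varnothing$ with $\n{\boldsymbol{t}_k}=1$ and $\sum_{i=1}^{n_k}t_{k,i}\to\infty$; as these sums are unbounded, no constant of summability can exist. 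Hence only the power-type equality $p(\TT(c_0))=1$ requires work, and that work is a single application of Theorem~\ref{power} with $\EE=\TT$ and $X_n=c_0$ for every $n$.

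To apply Theorem~\ref{power} I would verify its standing requirements on $\EE=\TT$ and on the sequence of summands. First, $\TT$ is reflexive and carries a normalized, $1$-unconditional basis $(e_n)_{n=1}^\infty$, which is therefore shrinking. Secondly --- the one point demanding a (routine) verification --- the dual $\TT^\ast=T$ is asymptotic $\ell_1$ with respect to the biorthogonal basis $(e_n^\ast)_{n=1}^\infty$: given a finite block sequence $(x_j)_{j=1}^m$ of the $e_j^\ast$'s, the estimate $\n{\sum_j x_j}\leq\sum_j\n{x_j}$ is the triangle inequality, while $\n{\sum_j x_j}\geq\frac{1}{2}\sum_j\n{x_j}$ follows by feeding the sets $E_j=\supp(x_j)$ into the supremum in the implicit Tsirelson equation recalled in Example~\ref{T(c0)} (with $k=m$; the admissibility condition $\{k\}<E_1<\ldots<E_k$ holds once the block sequence is supported after $[1,m]$, the borderline case costing at most a worse absolute constant). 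Thus $\TT^\ast$ is an asymptotic $\ell_p$ space with $p=1$.

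Thirdly, the constant sequence $(X_n)_{n=1}^\infty=(c_0,c_0,\ldots)$ is power type bounded with $\mathfrak{p}(X_n)_{n=1}^\infty=1$: indeed $c_0$ has summable Szlenk index, which forces $\Sz(c_0,\e)\leq C\e^{-1}$ for an absolute constant $C$ and all $\e\in(0,1)$, so $\sup_n C_1(X_n)=C_1(c_0)<\infty$, and since $\mathfrak{p}(\cdot)$ is always at least $1$, equality follows. Theorem~\ref{power} then gives $p(\TT(c_0))=\max\{1,\mathfrak{p}(X_n)_{n=1}^\infty\}=1$; its proof incidentally also shows $\Sz(\TT(c_0))\leq\omega$, so the Szlenk power type is indeed well defined. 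The part that demanded real effort was isolating Proposition~\ref{log_estimate} and proving Theorem~\ref{power}; at the level of this corollary everything reduces to the two bookkeeping checks above, and the only one warranting any care is that $\TT^\ast$ be asymptotic $\ell_1$ with respect to its \emph{natural} basis rather than after a blocking --- which is precisely the shape of the defining Tsirelson inequality.
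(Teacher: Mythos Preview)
Your proposal is correct and follows exactly the paper's own route: invoke Example~\ref{T(c0)} for the failure of summability and Theorem~\ref{power} (with $\EE=\TT$, $p=1$, $X_n=c_0$) for the power type, after noting that $\TT^\ast$ is asymptotic $\ell_1$ with respect to its canonical basis. The paper's proof is the one-line version of what you wrote; your additional verification that $\TT^\ast$ is (essentially $2$-)asymptotic $\ell_1$ via the implicit Tsirelson formula and that $\mathfrak{p}(c_0,c_0,\ldots)=1$ is accurate and merely spells out what the paper takes for granted.
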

\begin{proof}
Since $\TT^\ast$ is $2$-asymptotic $\ell_1$ with respect to its canonical basis, Theorem~\ref{power} applies. That $\TT(c_0)$ does not have summable Szlenk index was already said in Example~\ref{T(c0)}.
\end{proof}
\begin{remark}
As we have already mentioned (see the comments after Theorem~\ref{kos_theorem}), \cite[Prop.~6.9]{kos} implies that if a~Banach space $X$ has an unconditional shrinking basis, then $X$ has summable Szlenk index whenever there exists a~constant $K>0$ so that $\Sz(X,\e)\leq K\e^{-1}$ for every $\e\in (0,1)$. Hence, the example $X=\TT(c_0)$ in fact shows that it is possible that the infimum in \eqref{p_def} is not attained.
\end{remark}

As we shall see below, Theorem~\ref{power} may collapse drastically if we merely assume that $\EE$ is an asymptotic $\ell_p$ space with respect to a~general FDD.
\begin{example}
Let $\EE=(\bigoplus_{n=1}^\infty\ell_2^n)_{c_0}$ which is isomorphic to a~subspace of $c_0$ (and its dual is asymptotic $\ell_1$). Then the $\EE$-direct sum $\EE(c_0)$ of infinitely many copies of $c_0$ fails to have the Szlenk power type $1$. Indeed, for any $\e>0$ take a~natural number $n\leq\e^{-2}$ and consider a weak$^\ast$-null tree-map $(x_a^\ast)_{a\in S}$ in $(c_0\oplus\ldots\oplus c_0)_{\ell_2^n}^\ast$ of height $n+1$ constructed in such a~way that for every $a\in S$ with $\abs{a}<n$ we have
$$
x_{a\smallfrown m}^\ast=\bigl(\underbrace{0,\ldots,0}_{\abs{a}\text{ times}},\xi_{\abs{a}+1,m},0,\ldots,0\bigr)\quad\mbox{for }m>\max a,
$$
where $(\xi_{\abs{a}+1,m})_{m=1}^\infty$ is weak$^\ast$-null in $\ell_1$ and satisfies $\n{\xi_{\abs{a}+1,m}}=\e$ for each $m\in\N$. Then for every branch $\beta\subset S$ we have $\n{\sum_{a\in\beta}x_a^\ast}=\e\sqrt{n}\leq 1$. This shows that $\Sz(\EE(c_0),\e)>\e^{-2}$ which proves our claim. In fact, since $\EE(c_0)$ is naturally isometric to a~subspace of $c_0(\ell_2(c_0))$, we have $p(\EE(c_0))=2$. However, things can get much worse than that.
\end{example}

\begin{example}
Let $p_n=n/(n-1)$ and $\EE=(\bigoplus_{n=1}^\infty\ell_{p_n}^n)_{c_0}$ which is isomorphic to a~subspace of $c_0$. Then the $\EE$-direct sum $\EE(c_0)$ of infinitely many copies of $c_0$ has the Szlenk index $\omega^2$. Indeed, for any natural number $n$ consider a weak$^\ast$-null tree-map $(x_a^\ast)_{a\in S}$ in $(c_0\oplus\ldots\oplus c_0)_{\ell_{p_n}^n}^\ast$ of height $n+1$ constructed in such a~way that for every $a\in S$ with $\abs{a}<n$ we have
$$
x_{a\smallfrown m}^\ast=\bigl(\underbrace{0,\ldots,0}_{\abs{a}\text{ times}},\xi_{\abs{a}+1,m},0,\ldots,0\bigr)\quad\mbox{for }m>\max a,
$$
where $(\xi_{\abs{a}+1,m})_{m=1}^\infty$ is weak$^\ast$-null in $\ell_1$ and satisfies $\n{\xi_{\abs{a}+1,m}}=1/2$ for each $m\in\N$. Then for every branch $\beta\subset S$ we have $\n{\sum_{a\in\beta}x_a^\ast}=\sqrt[n]{n}/2\leq 1$. This yields $\Sz(\EE(c_0),1/2)>n$ for each $n\in\N$ and hence $\Sz(\EE(c_0),1/2)>\omega$. On the other hand, $\Sz(\EE(c_0))\leq\omega^2$ in view of Causey's result \cite[Theorem 5.14]{causey}.
\end{example}

\subsection*{Acknowledgements}
The first-named author was supported by the University of Silesia Mathematics Department (Iterative Functional Equations and Real Analysis program). The second-named author was supported by the Polish Ministry of Science and Higher Education in the years 2013-14, under Project No.~IP2012011072.

\bibliographystyle{amsplain}

\end{document}